\xpatchcmd{\author}{\relax#1\relax}{\relax\detokenize{#1}\relax}{}{}
\def\NAT@def@citea{\def\@citea{\NAT@separator}}
\theoremstyle{plain}
\newtheorem{theorem}{Theorem}[section]
\newtheorem{lemma}[theorem]{Lemma}
\newtheorem{proposition}[theorem]{Proposition}
\theoremstyle{definition}
\newtheorem{definition}[theorem]{Definition}
\newtheorem{example}[theorem]{Example}
\newtheorem*{question}{Question}
\newtheorem{remark}[theorem]{Remark}
\author[a,b]{Liang Guo}
\author[a,c]{Qin Wang}
\author[\empty]{Chen Zhang\textsuperscript{a,}\thanks{Corresponding author.}}
\affil[a]{\small\it{Research Center for Operator Algebras, School of Mathematical Sciences, East China Normal University, Shanghai, 200241, P.~R.~China.}}
\affil[b]{\small\it{Shanghai Institute for Mathematics
and Interdisciplinary Sciences, Shanghai, 200433, P.~R.~China}}
\affil[c]{\small\it{Key Laboratory of MEA, Ministry of Education, and Shanghai Key Laboratory of PMMP, East China Normal University, Shanghai 200241, P.~R.~China}}
\title{The maximal coarse Baum-Connes conjecture for spaces that admit an A-by-FCE coarse fibration structure\footnotetext{\emph{ Email address: 
liangguo@simis.cn (L.~Guo),
qwang@math.ecnu.edu.cn (Q.~Wang),
52275500018@stu.ecnu.edu.cn (C.~Zhang)}}}
\date{\today}
\begin{document}
\maketitle
\noindent\textbf{Abstract.} In this paper, we introduce a concept of A-by-FCE coarse fibration structure for metric spaces, which serves as a generalization of the A-by-CE structure for a sequence of group extensions proposed by Deng, Wang, and Yu. We prove that the maximal coarse Baum-Connes conjecture holds for metric spaces with bounded geometry that admit an A-by-FCE coarse fibration structure. As an application, the relative expanders constructed by Arzhantseva and Tessera, as well as the box spaces derived from an ``amenable-by-Haagerup'' group extension, admit the A-by-FCE coarse fibration structure. 
Consequently, the maximal coarse Baum-Connes conjecture holds for these spaces, which may not admit an FCE structure, i.e. fibred coarse embedding into Hilbert space.

\noindent\textbf{Key words.} The coarse Baum-Connes conjecture; noncommutative geometry; coarse embedding; relative expander graphs

\noindent\textbf{2020 MR Subject Classification.} 46L80

\begin{spacing}{1.2}
\tableofcontents
\end{spacing}

\section{Introduction}

Let $X$ be a discrete proper metric space. The (maximal) coarse Baum-Connes conjecture, initially proposed in \cite{Roe93,HR95,Yu95} and extended to its maximal version in \cite{GWY08}, is a coarse geometric analogue of the original Baum-Connes conjecture for groups \cite{BCH94}. It states that the following coarse assembly map
\[\mu:\lim_{d\rightarrow\infty}K_{*}(P_{d}(X))\rightarrow K_{*}(C^{*}(X))(\text{or}\ K_{*}(C^{*}_{\max}(X))\]
is an isomorphism, where $K_{*}(P_{d}(X))$ is a topological object involving $K$-homology groups of the Rips complex of $X$, and $K_{*}(C^{*}(X))(\text{or}\ K_{*}(C^{*}_{\max}(X)))$
is the $K$-theory groups of the (maximal) Roe algebra of $X$. A positive answer to this conjecture transforms the problem of calculating the higher indices of generalized elliptic operators on non-compact spaces to the computation of locally finite $K$-homology group, thus establishing a bridge between Connes' theory of noncommutative geometry and classical commutative geometry. In particular, it implies the Novikov conjecture and the non-existence of positive scalar curvature metrics on uniformly contractible complete Riemannian manifolds, as well as Gromov’s zero-in-the-spectrum conjecture stating that the Laplacian operator acting on the space of all $L^{2}$-forms of a uniformly contractible Riemannian manifold has zero in
its spectrum.

A significant contribution to proving this conjecture was presented by G. Yu \cite{Yu00}, who demonstrated that the coarse assembly map $\mu$ is an isomorphism for metric spaces with bounded geometry that coarsely embed into Hilbert space. Here, a metric space $X$ with \emph{bounded geometry} means that for any $r>0$ there exists $N_{r}>0$ such that any ball of radius $r$ in $X$ contains at most $N_{r}$ elements. A map $f:X\rightarrow H$ from a metric space $X$ to a Hilbert space $H$ is said to be a \emph{coarse embedding}\cite{Gro93} if there exist two non-decreasing functions $\rho_{1}$ and $\rho_{2}$ from $[0,\infty)$ to $[0,\infty)$ with $\lim_{r\rightarrow \infty} \rho_{i}(r)=\infty, i=1,2,$ such that
\[\rho_{1}(d_{X}(x,x'))\leqslant \|f(x)-f(x')\|\leqslant \rho_{2}(d_{X}(x,x')),\]
for all $x,x'\in X$. Also in \cite{Yu00}, G. Yu introduced the concept of Property A as a coarse analogue of amenability and showed that bounded geometry spaces with Property A can all be coarsely embedded into Hilbert space, see also \cite{Wil09}. Then the following natural question arises.

\begin{question}
    Can all spaces be coarsely embedded into Hilbert space? If not, what about their coarse Baum-Connes conjecture?
\end{question}

$\bullet$ \textbf{Negative side:} The works of \cite{Enf69,DGLY02,Gro03} inspired that a sequence of expander graphs cannot be coarsely embeded into Hilbert space. Specifically, motivated by a graph sequence constructed by Enflo \cite{Enf69}, M. Gromov discovered that the Pinski-Margulis-Selberg expander graph may be coarsely embedded into a certain finitely generated group, and he used a method of random groups to construct a concrete example which is known as Gromov's monster groups \cite{Gro03}. For a long time, expander graphs were the only known obstruction for a bounded geometry space to admit a coarse embedding into Hilbert space.
As for the second part of the question, expander graphs also bring us bad news. In \cite{HLS02}, N. Higson, V. Lafforgue, and G. Skandalis provided a counterexample to the coarse Baum-Connes conjecture for certain Margulis-type expanders. Furthermore, R. Willett and G. Yu \cite{WY121} showed that a larger class of expander graphs beyond Margulis-type expander graphs also make the surjectivity of the coarse assembly map fail.

$\bullet$ \textbf{Positive side:} Although the coarse Baum-Connes conjecture fails for expander graphs, its maximal version might be true. A representative work in this direction was given by G. Gong, Q. Wang, and G. Yu in \cite{GWY08}. They introduced the maximal Roe algebra $C^{*}_{\max}(X)$ of $X$ and proved the maximal coarse Novikov conjecture (i.e. injectivity of the maximal coarse assembly map), for the box space of a class of residually finite groups, including V. Lafforgue's sequences of expanders in \cite{Laf08}. Meanwhile, H. Oyono-Oyono and G. Yu \cite{OOY09} 
showed the maximal coarse Baum-Connes assembly map is an isomorphism for certain expander graphs constructed from
spaces with isometric actions by residually finite groups. Afterward, R. Willett and G. Yu \cite{WY12} considered a class of expanders with large girth and proved the maximal coarse Baum-Connes conjecture for its coarse disjoint union. The expander graphs studied above cannot be globally coarsely embedded into Hilbert space. However,  X. Chen, Q. Wang, and G. Yu discovered that large bounded subsets of $X$ can be coarsely embedded into Hilbert space within a common distortion as long as these subsets are far away toward infinity. This feature is referred to as ``fibred coarse embedding into Hilbert spaces" in \cite{CWY13}, which is a generalization of Gromov's notion of coarse embedding \cite{Gro93} into Hilbert space. It turns out that the maximal coarse Baum-Connes conjecture holds for expander graphs that admit such an embedding, including the major results of \cite{OOY09} and \cite{WY12}.

Most of the work we mentioned above was conducted from the perspective that expanders are counterexamples to the coarse Baum-Connes conjecture. However, metric spaces that cannot be closely embeded into Hilbert space do not always contain expander graphs.

In \cite{AT15}, G. Arzhantseva and R. Tessera introduced the concept of ``relative expander" and constructed a metric space with bounded geometry that cannot be coarsely embedded into any $L^{p}$-space for $1\leqslant p<\infty$. After that, T. Delabie and A. Khukhro \cite{DK18} utilized the representation theory of linear groups over finite fields and Ramanujan graphs to create a box space for the free group $\mathbb{F}_{3}$. This space cannot be coarsely embedded into Hilbert space and yet does not contain any expanders as well. Motivated by D. Osajda's work \cite{Osa20} on specific formulations for Gromov monster groups and Haagerup monster groups, G. Arzhantseva and R. Tessera \cite{AT19} employed the wreath product of groups to build two examples $\mathbb{Z}_{2}\wr_{G}H$ and $\mathbb{Z}_{2}\wr_{G}(H\times \mathbb{F}_{n})$, where $G$ is a Gromov monster group and $H$ is a Haagerup monster group. These two groups contain relative expanders so that they cannot be coarsely embedded into $L^{p} (1\leqslant p<\infty)$ space and do not contain any expanders. 

Recently, Deng, Wang, and Yu \cite{DWY23} observed that these three examples can all be regarded as group extensions that admit a ``CE-by-CE" structure. Precisely, for a sequence of group extensions
$(1\rightarrow N_{n}\rightarrow G_{n}\rightarrow Q_{n}\rightarrow 1)_{n\in\mathbb{N}}$ of finitely generated groups with uniformly finitely generated subsets, if the coarse disjoint unions of $\{N_{n}\}_{n\in\mathbb{N}}$ and $\{Q_{n}\}_{n\in\mathbb{N}}$ are coarsely embeddable into Hilbert space, then the coarse Baum-Connes conjecture holds for the coarse disjoint union
of $\{G_{n}\}_{n\in\mathbb{N}}$. This result implies the coarse Baum-Connes conjecture for the relative expanders constructed by G. Arzhantseva and R. Tessera, and the special box spaces of free groups discovered by T. Delabie
and A. Khukhro. At this stage, one might naturally explore the coarse Baum-Connes conjecture for the case of ``FCE-by-FCE". In \cite{DGWY25}, Deng, Guo, Wang, and Yu have shown that the injectivity of the (maximal) coarse assembly map holds for such spaces. But the surjectivity is still unknown.

Inspired by the above developments, in this paper, we introduce a notion of ``A-by-FCE coarse fibration structure" for metric spaces with bounded geometry. It is a generalization of the A-by-CE structure for a sequence of group extensions proposed in \cite{DWY23}. We shall prove that the maximal coarse Baum-Connes conjecture holds for metric spaces that admit such a coarse structure. The following theorem is our main result.
\begin{theorem}\label{the. 1.1}
 Let $X$ be a discrete metric space with bounded geometry. If $X$ admits an A-by-FCE coarse fibration structure, then the maximal coarse Baum-Connes conjecture holds for $X$.
\end{theorem}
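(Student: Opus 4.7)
The overall strategy is to adapt the Dirac--dual-Dirac methodology (used for instance in \cite{CWY13,GWY08}) to the fibred setting by exploiting the A-by-FCE structure as follows: the fibre direction (with FCE) supplies a Bott-type twisting, while the base direction (with Property A) supplies a partition-of-unity mechanism for gluing the local Bott constructions. As a first move, I would reduce the problem to the localization-algebra formulation, so that it suffices to prove that the evaluation-at-zero map
\[ e_*: K_*(C^*_{L,\max}(X))\longrightarrow K_*(C^*_{\max}(X)) \]
is an isomorphism; this reformulates the abstract assembly map as a natural $*$-homomorphism between concrete $C^*$-algebras.

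Next, using the A-by-FCE data, I would construct a coefficient $C^*$-algebra $\mathcal{A}$ together with twisted maximal algebras $C^*_{\max}(X,\mathcal{A})$ and $C^*_{L,\max}(X,\mathcal{A})$. Fibre by fibre, the fibred coarse embedding into Hilbert space yields a Bott algebra of the type used by Higson--Kasparov--Trout; these local pieces are then assembled over the base via slowly-varying partitions of unity supplied by Property A of the base. A Bott homomorphism $\beta$ and its localized version (intertwining the two evaluation maps) would then be set up. The heart of the argument would split into two independent K-theoretic isomorphism statements: (I) the twisted evaluation map $K_*(C^*_{L,\max}(X,\mathcal{A}))\to K_*(C^*_{\max}(X,\mathcal{A}))$ is an isomorphism, proved by a Mayer--Vietoris argument along the coarse fibration that reduces the problem to the fibre level and invokes FCE there; and (II) the Bott map $\beta_*$ is an isomorphism on both K-theories, proved fibre-wise via the classical Bott--Dirac pairing for a Hilbert-space coefficient algebra and then extended globally by the Property A gluing. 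A commutative diagram chase would then yield that the original $e_*$ is an isomorphism.

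The principal obstacle, I expect, will be in (I), specifically the interaction between Property A and the \emph{maximal} completion. Unlike in the reduced case, the maximal Roe algebra is not exact, so the partition-of-unity cut-offs furnished by Property A of the base must be implemented very carefully in order to preserve the norm estimates and to ensure that the Mayer--Vietoris sequence is actually short-exact at the $C^*$-algebra level. I would expect to handle this by expressing the twisted algebra as an inductive limit of sub-algebras indexed by the metric scale (rather than as a single completion), and by using an ideal-based five-lemma in the spirit of \cite{GWY08,OOY09}, so that the fibrewise FCE isomorphism can be transferred to a global statement without any step that would conflict with maximality.
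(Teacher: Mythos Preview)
Your overall shape is right---reduce to the evaluation map, build a twisted algebra, prove the twisted evaluation map is an isomorphism, and untwist via a Bott--Dirac mechanism---but you have the roles of base and fibre exactly reversed, and this inversion is fatal. In the A-by-FCE structure of Definition~\ref{def. 2.4} it is the \emph{base} $Y$ that admits the fibred coarse embedding into Hilbert space, while the \emph{fibres} $p^{-1}(y)$ carry equi-Property~A. Consequently the Bott coefficient algebra $\mathcal{A}(V_n)$ is built from the FCE data of the base (the trivializations $t_y$ and sections $s$ live over $Y_n$, not over the fibres), and the Property~A hypothesis is invoked not to glue local Bott pieces over the base but to control the \emph{fibre} direction: after decomposing the twisted algebra into ideals supported on $(\Gamma,r)$-separate coherent systems, each piece is identified with a product of Roe algebras over cylinders $B_{P_d(X_n)}(p_n^{-1}(\gamma),S)$, which are uniformly coarsely equivalent to fibres; equi-Property~A of the fibres then forces the maximal and reduced completions of those cylinder algebras to coincide (Propositions~\ref{prop. 6.16} and \ref{prop. 6.17}), which is precisely what makes the twisted evaluation map an isomorphism. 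Your plan to ``invoke FCE at the fibre level'' and ``glue via Property~A of the base'' has no analogue in the actual structure and would not go through.

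Two further points. First, you aim to show that $\beta_*$ is an isomorphism; the paper does not do this and it is not needed. What is proved (Theorem~\ref{the. 7.8}) is only that $\alpha_*\circ\beta_*$ and $(\alpha_L)_*\circ(\beta_L)_*$ are the identity, via an explicit homotopy to the Bott--Dirac operator; this weaker statement, together with the isomorphism of $e^{\mathcal{A}}_*$, already suffices for the diagram chase. Second, you omit an intermediate reduction that the paper relies on: using an $\omega$-excisive decomposition of the base one first reduces to a coarse disjoint union $\bigsqcup_n X_n$ with \emph{finite} base spaces (Propositions~\ref{prop. 4.8} and \ref{prop. 4.22}), and then passes to algebras ``at infinity'' (Lemma~\ref{lem. 4.2}, Theorem~\ref{the. 4.6}). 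This step is where condition~(4) of Definition~\ref{def. 2.4} is used, and it is exactly here that working with the maximal completion is essential, since the analogous short exact sequence fails for the reduced Roe algebra (Remark~\ref{rem. 4.3}).
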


Typical metric spaces with an A-by-FCE coarse fibration structure can also be constructed through a sequence of group extensions. In particular, the relative expanders established by Arzhantseva and Tessera in \cite{AT15}, which admit an A-by-CE structure \cite{DWY23}, also support our A-by-FCE coarse fibration structure, however, they may not support FCE (i.e., fibred coarse embedding into Hilbert space). 
Since $K$-amenability is generally false for A-by-FCE coarse fibration structures, the original coarse Baum-Connes conjecture cannot be deduced directly from Theorem \ref{the. 1.1}. Moreover, motivated by \cite{CWW13}, the box spaces constructed from an 
``amenable-by-Haagerup'' group extension also allows for the A-by-FCE coarse fibration structure. It turns out that a large class of relative expanders, even expanders, admit an A-by-FCE coarse fibration structure, and Theorem \ref{the. 1.1} implies that their maximal coarse Baum-Connes conjectures are valid.

Here is an outline of the paper. In Section \ref{section 2}, we introduce the notion of ``A-by-FCE coarse fibration structure" and show that spaces with such a structure can be created through a sequence of group extensions. Furthermore, the box space, derived from an 
``amenable-by-Haagerup'' group extension, also exhibits an A-by-FCE coarse fibration structure. In Section \ref{section 3}, we provide a brief overview of Roe algebras, maximal Roe algebras, and the formulation of the maximal coarse Baum-Connes conjecture. In Section \ref{section 4}, we explain the strategy to prove Theorem \ref{the. 1.1}, that is, the problem of proving the maximal coarse Baum-Connes conjecture for a metric space $X$ can be reduced to verifying that the evaluation homomorphism from the $K$-theory of localization algebras at infinity to the $K$-theory of the maximal Roe algebra at infinity for the coarse disjoint union of a sequence of subspaces of $X$ is an isomorphism. In Section \ref{section 5}, we introduce the maximal twisted Roe algebra at infinity and its localization algebras for a sequence of metric spaces with coefficients coming from the fibred coarse embedding of the base space into Hilbert space. On this basis, we construct the Bott map as asymptotic morphisms. In Section \ref{section 6}, we employ the cutting and pasting technique to explore different ideals of the twisted algebras and prove that the evaluation map for the twisted algebras supported on certain coherent systems is an isomorphism. In Section \ref{section 7}, we define the Dirac map as asymptotic morphisms and subsequently establish a geometric analogue of Bott periodicity in finite dimensions. This fact shows that the evaluation map required in Section \ref{section 4} is an isomorphism, which in turn implies our main result, Theorem \ref{the. 1.1}.

\section{A-by-FCE coarse fibration structure}\label{section 2}

In this section, we introduce the concept of an A-by-FCE coarse fibration structure for a discrete metric space with bounded geometry, which generalizes the A-by-CE group extension structure studied by Deng, Wang, and Yu in \cite{DWY23}. It is observed that a large class of relative expanders and certain group extensions admit such a coarse structure. Subsequently, the maximal coarse Baum-Connes conjecture holds by Theorem \ref{the. 1.1}. To start, we review several useful notions and results.

\begin{definition}\label{def. 2.1}
Let $(X,d)$ be a metric space. For any $r > 0, x\in X$, let $B(x,r):=\{x'\in X\mid d(x, x')<r\}$
denote the open ball of radius $r$ about $x$. A \emph{$C$-net} of a metric space $X$ is a countable subset $\varGamma\subset X$ such that there exists $C > 0$ satisfying: (1)
$d(\gamma,\gamma')>C$ for all distinct elements $\gamma,\gamma'\in\varGamma$; (2) for any $x\in X$ there exists $\gamma\in\varGamma$ such that $d(x,\gamma)<C$.
A metric space $X$ is said to have \emph{bounded geometry} if for any $r>0$ there is
$N_{r}>0$ such that any ball of radius $r$ in $X$ contains at most $N_{r}$ elements. \par
Let $(X, d_{X})$ and $(Y, d_{Y})$ be two metric spaces. A map $f:X\rightarrow Y$ is said to be a \emph{coarse embedding} (or \emph{uniform embedding}) if there
exist non-decreasing functions $\rho_{1}$ and $\rho_{2}$ from $[0,\infty)$ to $[0,\infty)$ with $\lim_{r\rightarrow \infty} \rho_{i}(r)=\infty, i=1,2,$ such that
\[\rho_{1}(d_{X}(x,x'))\leqslant d_{Y}(f(x),f(x'))\leqslant \rho_{2}(d_{X}(x,x')),\]
for all $x,x'\in X$.
A metric space $X$ is said to be \emph{coarsely equivalent} to another metric space $Y$ if there
exists a coarse embedding $f:X\rightarrow Y$ such that the image $f(X)$ is a $C$-net of $Y$ for some $C > 0$.\par 
Suppose that $\{(X_{i}, d_{X_{i}})\}_{i\in I}$ and $\{(Y_{i}, d_{Y_{i}} )\}_{i\in I}$ be families of metric spaces with \emph{uniform
bounded geometry} in the sense that for any $r>0$ there exists $N_{r}>0$ such that $B(x,r)$ contains at most $N_{r}$ elements for $x\in X_{i},i\in I$. 
The sequence $\{X_{i}\}_{i\in I}$ is said to be \emph{uniformly coarsely equivalent} (or \emph{equi-coarsely equivalent})
to $\{Y_{i}\}_{i\in I}$ if there exists a sequence of maps $\{f_{i}:X_{i}\rightarrow Y_{i}\}_{i\in I}$, a constant $C>0$
and two non-decreasing functions $\rho_{1}$ and $\rho_{2}$ from $[0,\infty)$ to $[0,\infty)$ with $\lim_{r\rightarrow \infty} \rho_{i}(r)=\infty,i=1,2,$ such that
\begin{enumerate}
\item [(1)] \hspace{0pt}$\rho_{1}(d_{X_{i}}(x,x'))\leqslant d_{Y_{i}}(f_{i}(x),f_{i}(x'))\leqslant \rho_{2}(d_{X_{i}}(x,x'))$ for any $x,x'\in X_{i}$ and $i\in I$;
\item [(2)] \hspace{0pt}the image $f_{i}(X_{i})$ is the $C$-net of
$Y_{i}$ for each $i\in I$.
\end{enumerate}
\end{definition}

The concept of fibred coarse embedding into Hilbert space for metric spaces was first proposed by Chen, Wang, and Yu in \cite{CWY13}. It is a generalization of Gromov’s notion of coarse embedding \cite{Gro93} into Hilbert space.

\begin{definition}[\cite{CWY13}]\label{def. 2.2}
A  metric space $(X,d)$ is said to admit a \emph{fibred coarse embedding into Hilbert space} if there exist\par 
$\bullet$ a field of Hilbert spaces $(H_{x})_{x\in X}$ over $X$;\par
$\bullet$ a section $s:X\rightarrow\bigsqcup_{x\in X}H_{x}$ ($\text{i.e.}\ s(x)\in H_{x}$);\par 
$\bullet$ two non-decreasing functions $\rho_{1}$ and $\rho_{2}$ from $[0,\infty)$ to $[0, \infty)$ with $\lim\limits_{r\rightarrow\infty}\rho_{i}(r)=\infty\ (i=1,2)$\\
such that for any $r>0$ there exists a bounded subset $K\subset X$ for which there exists a ``trivialization"
$t_{C}:(H_{x})_{x\in C}\rightarrow 
C\times H$ for each subset $C\subset X\setminus K$ of diameter less than $r$, i.e. a map from $(H_{x})(x\in C)$ to the constant field $C\times H$ over $C$ such that the restriction of $t_{C}$ to the fiber $H_{x}(x\in C)$ is an affine isometry $t_{C}(x):H_{x}\rightarrow H$, satisfying
\begin{enumerate}
    \item [(1)] \hspace{0pt}for any $x,y\in C$, $\rho_{1}(d(x,y))\leqslant\|t_{C}(x)(s(x))-t_{C}(y)(s(y))\|\leqslant \rho_{2}(d(x,y))$;
    \item [(2)] \hspace{0pt}for any subsets $C_{1},C_{2}\subset X\setminus K$ of diameter less than $r$
    with $C_{1}\cap C_{2}\neq \emptyset$, there exists an affine isometry $t_{C_{1}C_{2}}: H\rightarrow H$ such that $t_{C_{1}}(x)\circ t^{-1}_{C_{2}}(x)=t_{C_{1}C_{2}}$ for all $x\in C_{1}\cap C_{2}$.
\end{enumerate}
\end{definition}
This generalization allows the major results for expander graphs studied in \cite{OOY09} and \cite{WY12} to be the special case that admits a fibred coarse embedding into Hilbert space, ensuring the fulfillment of their maximal coarse Baum-Connes conjectures. Especially, results in \cite{OOY09,OO01} indicate that the maximal coarse Baum-Connes conjecture holds for the box space derived from an 
``amenable-by-Haagerup'' group extension, which will be explained later that they admit an A-by-FCE coarse fibration structure. Next, we review the notions of Property A and equi-Property A, which will be essential in section \ref{section 6} of this paper. For further information, readers can consult \cite{Yu00,HR00,Tu01,BNW07}.

\begin{definition}\label{def. 2.3}
A metric space $X$ is said to have \emph{Property $A$} if for any $R>0$ and $\epsilon>0$, there
exists $S>0$ and a map
$\xi:X\rightarrow \ell^{2}(X)$ such that: (1) for all $x\in X$, $\xi _{x}(y)\in[0,1]$; (2) for all $x,y\in X$, $\|\xi _{x}\|_{2}=1$; (3) $\sup\{\|\xi _{x}-\xi _{y}\|:d(x,y)\leqslant R, x,y\in X\}<\epsilon$; (4) for all $x\in X$, $\text{Supp}(\xi _{x})\subset B(x,S)$.\par 
A family $\{X_{i}\}_{i\in I}$ of discrete metric spaces with
bounded geometry is said to have \emph{equi-Property $A$} if for any $R>0,\epsilon>0$, there
exists $S>0$ and a family of maps 
$\xi^{(i)}:X_{i}\rightarrow \ell^{2}(X_{i}), i\in I$ such that: (1) for all $x, y \in X_{i}$, $\xi ^{(i)}_{x}(y)\in[0,1]$; (2) for all $x,y\in X_{i}$, $\|\xi ^{(i)}_{x}(y)\|=1$; (3) $\sup\{\|\xi ^{(i)}_{x}-\xi ^{(i)}_{y}\|:d(x,y)\leqslant R, x,y\in X_{i}\}<\epsilon$; (4) for all $x\in X_{i}$, $\text{Supp}(\xi ^{(i)}_{x})\subset B(x,S), i\in I$.
\end{definition}


In \cite{GLWZ23}, a notion of ``A-by-CE coarse fibration structure" was defined. In the following, we introduce a generalization of this concept, namely ``A-by-FCE coarse fibration structure".
\begin{definition}[A-by-FCE coarse fibration structure]\label{def. 2.4} A discrete metric space $X$ with bounded geometry is said to admits an \emph{A-by-FCE coarse fibration structure} if there exist a discrete metric space $Y$ (called \emph{the base space}) with bounded geometry and a surjective map $p:X\rightarrow Y$ satisfying
\begin{enumerate}
\item [(1)] \hspace{0pt}the map $p$ is \emph{bornologous} (or \emph{uniformly expansive}), i.e. for any $R>0$, there exists $S>0$ such that \[d_X(x,x')\leqslant R \Rightarrow d_Y(p(x),p(x'))\leqslant S ;\]
\item [(2)] \hspace{0pt}the base space $Y$ admits a fibred coarse embedding into Hilbert space;
\item [(3)] \hspace{0pt}the family of fiber spaces $\{p^{-1}(y)\}_{y\in Y}$ of $X$ has equi-Property A;
\item [(4)] \hspace{0pt}for any $R>0$, the collection $\{p^{-1}(B_Y(y,R))\}_{y\in Y}$ is uniformly coarsely equivalent to $\{p^{-1}(y)\}_{y\in Y}$.
\end{enumerate}

Moreover, a sequence of discrete metric spaces $(X_{n})_{n\in\mathbb{N}}$ with uniform bounded geometry is said to admit an A-by-FCE coarse fibration structure if each $X_n$ admits an A-by-FCE structure with uniform constant as above independent on $n$.
\end{definition}

In the rest of this paper, we shall prove the maximal coarse Baum-Connes conjecture for a discrete metric space $X$ with bounded geometry that admits an A-by-FCE coarse fibration structure. The approach is to reduce the problem to the case of the coarse disjoint union of a sequence of metric spaces $(X_{n})_{n\in\mathbb{N}}$. Here, a coarse disjoint union of $(X_{n})_{n\in\mathbb{N}}$ refers to the disjoint union $X=\bigsqcup_{n\in\mathbb{N}} X_{n}$ equipped with a metric $d$ such that: (1) the restriction of $d$ to each $X_{n}$ is the original metric of $X_{n}$;
(2) $d(X_{n}, X_{m})\rightarrow \infty$ as $n+m\rightarrow \infty$ and $n\neq m$. 
Therefore, the reader will frequently encounter the uniform version of the A-by-FCE coarse fibration structure in the sequel.\par 


We also want to point out that Definition \ref{def. 2.4} given above is slightly stronger from the notion of coarse fibration structure proposed in \cite{GLWZ23}. In this study, condition (4) is utilized to ensure the validity of Proposition \ref{prop. 4.8}  in Section \ref{section 4}. Specifically, if the base space $Y$ permits an $\omega$-excisive decomposition $Y=Y^{(0)}\cup Y^{(1)}$ (Cf. \cite{HRY93}), then the corresponding fiber spaces $p^{-1}(Y^{(0)})$ and $p^{-1}(Y^{(1)})$ forms a partition of $X$, and the pair $(p^{-1}(Y^{(0)}),p^{-1}(Y^{(1)}))$ remains $\omega$-excisive. To ensure the validity of this property, it is sufficient for the fibers to be in approximate proximity without the necessity of precise arrangement. Further elaboration on this matter can be found in the proof of Proposition \ref{prop. 4.8}.

To get a bit more intuition for the A-by-FCE coarse fibration structure, our next examples give more
concrete pictures in the group extension case.

\begin{example}\label{exa. 2.5} Let $(1\rightarrow N_{n}\rightarrow G_{n}\rightarrow Q_{n}\rightarrow 1)_{n\in\mathbb{N}}$ be a sequence of group extensions of countable discrete groups. If the coarse disjoint union of $(N_{n})_{n\in\mathbb{N}}$ has Property A and the coarse disjoint union of $(Q_{n})_{n\in\mathbb{N}}$ admits a fibred coarse embedding into Hilbert space, then the coarse disjoint union of $(G_{n})_{n\in\mathbb{N}}$ admits an A-by-FCE coarse fibration structure. 

For each $n\in\mathbb{N}$, assume that the normal subgroup $N_{n}\lhd G_{n}$ is endowed with the subspace metric of $G_{n}$, and the quotient group $Q_{n}\cong G_{n}/N_{n}$ is endowed with the quotient metric. Denote by $p_{n}: G_{n} \rightarrow Q_{n}$ the canonical quotient map. Obviously, it is a contractive map.

For each $q_{n}\in Q_{n}$, choose $g_{n}\in G_{n}$ such that $p_{n}(g_{n})=q_{n}$. The fiber space of $q_{n}$, denoted by $p_{n}^{-1}(q_{n})$, is exactly the coset $g_{n}N_{n}$. The left-invariant metric of $G_{n}$ implies that
the fiber space $p_{n}^{-1}(q_{n})$ is isometric to another $p_{n}^{-1}(q'_{n})$ for any $q_{n}, q'_{n}\in Q_{n}$ by
\[g_{n}N_{n}\rightarrow g'_{n}N_{n},\quad g_{n}x_{n}\mapsto g'_{n}x_{n}.\]
If $q_{n}, q'_{n}\in Q_{n}$ satisfy $d_{Q_{n}}(q_{n},q'_{n})<R$ for some $R> 0$, then for any $x\in p_{n}^{-1}(q_{n})$,
there exists $x'\in p_{n}^{-1}(q'_{n})$ such that $d_{G_{n}}(x,x')<R$. The definition of quotient metrics guarantees the existence of $x'$. Therefore, there exists $C\geqslant R$
such that the family of fiber spaces $\{p_{n}^{-1}(q_{n})\}_{q_{n}\in Q_{n}}$ becomes a $R$-net of $\{p_{n}^{-1}(B_{Q_{n}} (q_{n},R))\}_{q_{n}\in Q_{n}}$. From
Definition \ref{def. 2.1} we conclude that the
fiber spaces $\{p_{n}^{-1}(q_{n})\}_{q_{n}\in Q_{n},n\in\mathbb{N}}$ is uniformly coarsely equivalent
to $\{p_{n}^{-1}(B_{Q_{n}}(q_{n},R))\}_{q_{n}\in Q_{n},n\in\mathbb{N}}$. 
Since the coarse disjoint union of normal subgroups $(N_{n})_{n\in\mathbb{N}}$ has Property A, we have that the sequence $\{p_{n}^{-1}(q_{n})\}_{q_{n}\in Q_{n},n\in\mathbb{N}}
=\{q_{n}N_{n}\}_{q_{n}\in Q_{n},n\in\mathbb{N}
}$ has equi-Property A. Consequently, the coarse disjoint union of $(G_{n})_{n\in\mathbb{N}}$ admits an A-by-FCE coarse fibration structure.

A specific illustration of spaces exhibiting an A-by-FCE coarse fibration structure as a sequence of group extensions was constructed by Arzhantseva and 
Tessera in \cite{AT15}. In particular, these spaces admit an A-by-FCE coarse fibration structure but not FCE (i.e. fibred coarse embedding into Hilbert space).
\end{example}

\begin{example}\label{exa. 2.6}
Let $\varDelta$ be an amenable group and $\varOmega$ be a group with the Haagerup property.  An extension
of $\varOmega$ by $\varDelta$ is an exact sequence $1\rightarrow \varDelta\rightarrow \varGamma\xrightarrow{\pi} \varOmega\rightarrow 1$ of groups. Suppose that $(\varGamma_{n})_{n\in\mathbb{N}}$ is a sequence of finite index normal subgroups of $\varGamma$ satisfying
for any $r > 0$ there exists $N\in\mathbb{N}$ such that $\varGamma_{n}\cap B_{\varGamma}(e,r)=\{e\}$ for all $n\geqslant{N}$, where $B(e,r)$ is the ball in $\varGamma$ of radius $r$ about the identity $e$. It follows from \cite{CWW13} that 
\begin{align*}
    \varDelta \ \text{amenable}&\Longleftrightarrow \text{Box}_{\{\varDelta_{n}\}}(\varDelta)\ \text{Yu's Property A};\\
    \varOmega \ \text{Haagerup}&\Longleftrightarrow \text{Box}_{\{\varOmega_{n}\}}(\varOmega)\ \text{ fibred coarsely embeddable into Hilbert space},
\end{align*}
where $\varDelta_{n}=\varDelta\cap\varGamma_{n}$ and $\varOmega_{n}=\pi(\varGamma_{n})$ for each $n\in\mathbb{N}$.
From Example \ref{exa. 2.5} above we can conclude that the box space $X(\varGamma)=\bigsqcup_{n\in\mathbb{N}}\varGamma/
\varGamma_{n}$ possesses an A-by-FCE coarse fibration structure. Theorem \ref{the. 1.1} asserts the validity of the maximal coarse Baum-Connes conjecture for the box space $X(\varGamma)$, a result that can also be demonstrated through the approach outlined in \cite{OO01, OOY09} by H. Oyono-Oyono and G. Yu.
\end{example}

\section{The maximal coarse Baum-Connes conjecture}\label{section 3}

The aim of this section is to review some notions and results concerning the Roe algebra and the maximal Roe algebra of a proper metric space with bounded geometry. Furthermore, a brief formulation of the maximal coarse Baum-Connes conjecture is also presented. Readers are encouraged to consult \cite{WY20,GWY08, OOY09} for more detailed information.

Let $X$ be a proper metric space (a metric space is called \textit{proper} if every closed ball is
compact). An $X$-module $H_{X}$ is a separable Hilbert space equipped with a $*$-representation $\pi$ of $C_{0}(X)$ (the algebra of all continuous functions on $X$ which vanish at infinity). 
An $X$-module
is called \emph{non-degenerate} if the $*$-representation of $C_{0}(X)$ is non-degenerate. An $X$-module is
said to be \emph{ample} if no nonzero function in $C_{0}(X)$ acts as a compact operator. When $H_{X}$ is an
$X$-module, for each $f\in C_{0}(X)$ and $h\in H_{X}$, we abbreviate $(\pi(f))h$ by $fh$.

\begin{definition}(Cf. \cite{Roe93}.)\label{def. 3.1}
Let $H_{X}$ be an ample non-degenerate $X$-module.\par
\begin{enumerate}
	\item [(1)] \hspace{0pt}The \emph{support} of a bounded linear operator $T:H_{X}\rightarrow H_{X}$, denoted by $\text{supp}(T)$, is defined to be the complement of the set of all points $(x,x')\in X\times X$ for which there exist $f,g\in C_{0}(X)$ such that
	$gTf=0$ but $f(x)\neq0$, $g(x')\neq0$.
	\item [(2)] \hspace{0pt} The \emph{propagation} of 
 a bounded operator $T:H_{X}\rightarrow H_{X}$ is defined by \[\text{propagation}(T):=\sup\{d(x,x'):(x,x')\in \text{supp}(T)\}.\]
$T$ is said to have \textit{finite propagation} if
	the number is finite.
	\item [(3)] \hspace{0pt}A bounded operator $T:H_{X}\rightarrow H_{X}$ is said to be $locally$ $compact$ if the operators $fT$ and $Tf$
	are compact for all $f\in C_{0}(X)$.
\end{enumerate}
\end{definition}
\begin{definition}(Cf. \cite{Roe93}.)\label{def. 3.2} Let $H_{X}$ be an ample non-degenerate $X$-module.\par
\begin{enumerate}
	\item [(1)] \hspace{0pt}The algebraic Roe algebra $\mathbb{C}[X, H_{X}]$, or denoted by $\mathbb{C}[X]$, is the $*$-algebra of all locally compact and finite propagation operators on the ample non-degenerate $X$-module $H_{X}$. 
\item [(2)]  \hspace{0pt}The Roe algebra (also known as the reduced Roe algebra), denoted by $C^{*}(X,H_{X})$, or simply $C^{*}(X)$, is defined to be the operator norm closure of $\mathbb{C}[X,H_{X}]$ in the bounded linear operators on $H_{X}$. 
\end{enumerate}
\end{definition}
Note that the algebraic Roe algebra $\mathbb{C}[X,H_{X}]$, up to a non-canonical isomorphism, does not depend on the choice of ample non-degenerate $X$-module (see \cite{Yu97}). In particular, we choose $H_{X}$ to be the Hilbert space $\ell^{2}(Z)\otimes H_{0}$, where $Z$ is a countable dense subset of $X$ and $H_{0}$ is a separable infinite-dimensional Hilbert space. A function $f\in C_{0}(X)$ acts on 
$\ell^{2}(Z)\otimes H_{0}$ by
pointwise multiplication \[f(\xi\otimes h)=f\xi\otimes h\]
for all $\xi\in \ell^{2}(Z)$ and $h\in H_{0}$.
Since $\ell^{2}(Z)\otimes H_{0}=\oplus_{x\in Z}\mathbb{C}\delta _{x}\otimes H_{0}$, where $\delta_{x}$ is the Dirac function, we can express each bounded linear operator $T\in \mathcal{B}(\ell^{2}(Z)\otimes H_{0})$ as a $Z$-by-$Z$ matrix
\[T=(T(x,x'))_{x,x'\in{Z}},\] where $T(x,x')$ is a bounded linear operator from $\mathbb{C}\delta_{x'}\otimes H_{0}$ to $\mathbb{C}\delta_{x}\otimes H_{0}$. Then we have \par 
$\bullet$ the propagation of $T$ is $\sup\{d(x,x')\mid T(x,x')\neq 0\};$\par 
$\bullet$ if $T$ is locally compact, then $T(x,x')$ is a compact operator on $H_{0}$ for all $x, x'\in Z$.

\begin{definition}\label{def. 3.3}
    Define $\mathbb{C}_{f}[X]$ to be the $*$-algebra of all bounded functions $T:Z\times Z\rightarrow\mathcal{K}:=\mathcal{K}(H_{0})$ such that 
\begin{enumerate}
	\item[(1)] \hspace{0pt}for any bounded subset $B\subset X$, the set
	$\{(x,x')\in (B\times B)\cap (Z\times Z)\mid T(x,x')\neq 0 \}$
	is finite;
	\item[(2)] \hspace{0pt}there exists $L>0$ such that 
	\[\#\{x'\in Z\mid T(x,x')\neq 0\}<L \quad\text{and}\quad 
	\#\{x'\in Z\mid T(x',x)\neq 0\}<L\]
	for all $x\in Z$ (here, $\#A$ denotes the number of elements in a set $A$);
	\item[(3)] \hspace{0pt}there exists $R>0$ such that $T(x,x')=0$ whenever $d(x,x')>R$ for any $x,x'\in Z.$
\end{enumerate}
\end{definition}
The $*$-algebra $\mathbb{C}_{f}[X]$ is naturally equal to the algebraic Roe algebra $\mathbb{C}[X]$ when $X$ is a discrete metric space with bounded geometry (in general case $\mathbb{C}_{f}[X]$ is a dense $*$-subalgebra of $\mathbb{C}[X]$ within $C^{*}(X)$, but not equal). We shall use $\mathbb{C}_{f}[X]$ to replace $\mathbb{C}[X]$ as a generating subalgebra of $C^{*}(X)$ in the 
sequel.
\begin{lemma}\label{lem. 3.4}(Cf. \cite{GWY08}.)  Let $X$ be a proper metric space with bounded geometry, and let $H_{X}$ be an ample non-degenerate $X$-module. For any $r>0$ there exists a constant $c>0$ such that for any
$*$-representation $\phi$ of $\mathbb{C}[X]$ on a Hilbert space $H_{\phi}$ and any $T\in \mathbb{C}[X]$ with propagation less than $r$, we have that
\[\|\phi(T)\|_{B(H_{\phi})}\leqslant c \|T\|_{B(H_{X})}.
\]
\end{lemma}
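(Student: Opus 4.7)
The plan is to reduce an operator of propagation less than $r$ to a finite sum of ``generalized permutation'' operators, each of which factors as a partial isometry composed with a propagation-zero positive operator, and then to control each piece in an arbitrary $*$-representation $\phi$. First, I would invoke bounded geometry to produce a coloring: there exists an integer $N=N_{2r}$ and a partition $Z=Z_{1}\sqcup\cdots\sqcup Z_{N}$ of the countable dense subset $Z\subset X$ such that distinct points of each $Z_{i}$ are separated by distance strictly greater than $2r$. (Such a partition is constructed in a standard greedy fashion using that every ball of radius $2r$ meets at most $N_{2r}$ points of $Z$.) Letting $P_{i}\in B(H_{X})$ denote the projection onto $\ell^{2}(Z_{i})\otimes H_{0}$, each $P_{i}$ lies in the multiplier algebra of $\mathbb{C}[X]$, and I would write
\[
T=\sum_{i,j=1}^{N}T_{ij},\qquad T_{ij}:=P_{i}TP_{j}.
\]

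Next, I would verify that each $T_{ij}\in\mathbb{C}[X]$ has the structure of a ``partial monomial'' operator: for each $x\in Z_{i}$ there is \emph{at most one} $y\in Z_{j}$ with $T_{ij}(x,y)\neq 0$, and symmetrically for columns. Indeed, if $T_{ij}(x,y)$ and $T_{ij}(x,y')$ were both nonzero with $y\neq y'$, propagation $<r$ would give $d(y,y')<2r$, contradicting the $2r$-separation of $Z_{j}$. Consequently, the polar decomposition yields $T_{ij}=W_{ij}|T_{ij}|$, where $W_{ij}$ is a partial isometry and $|T_{ij}|$ is positive; both still have finite propagation (at most $r$ and $0$, respectively) and are locally compact, so $W_{ij},|T_{ij}|\in\mathbb{C}[X]$.

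To bound the image under $\phi$, I treat the two factors separately. For the partial isometry, $W_{ij}^{*}W_{ij}$ is a projection in $\mathbb{C}[X]$, hence $\phi(W_{ij}^{*}W_{ij})$ is a projection in $B(H_{\phi})$, giving $\|\phi(W_{ij})\|\leqslant 1$. For the positive propagation-zero operator $|T_{ij}|$, I would construct an explicit witness of positivity: letting $M=\|T_{ij}\|_{B(H_{X})}$ and $Q_{ij}$ the support projection of $|T_{ij}|$ (which is itself propagation zero and locally compact, so lies in $\mathbb{C}[X]$), the operator $MQ_{ij}-|T_{ij}|$ is positive and block-diagonal with compact blocks, and its block-wise square root $R_{ij}:=(MQ_{ij}-|T_{ij}|)^{1/2}$ belongs to $\mathbb{C}[X]$. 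Applying the $*$-homomorphism $\phi$ to the identity $MQ_{ij}-|T_{ij}|=R_{ij}^{*}R_{ij}$ yields $\phi(|T_{ij}|)\leqslant M\,\phi(Q_{ij})\leqslant M\cdot I$, hence $\|\phi(|T_{ij}|)\|\leqslant\|T_{ij}\|\leqslant\|T\|$.

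Combining, $\|\phi(T_{ij})\|\leqslant\|\phi(W_{ij})\|\cdot\|\phi(|T_{ij}|)\|\leqslant\|T\|$, and summing the $N^{2}$ terms gives $\|\phi(T)\|\leqslant N_{2r}^{2}\|T\|_{B(H_{X})}$, so one may take $c=N_{2r}^{2}$. I expect the main subtlety to be the third paragraph: because $\mathbb{C}[X]$ is only a $*$-algebra (not a $C^{*}$-algebra) and $\phi$ is a priori merely algebraic, one cannot invoke automatic $C^{*}$-continuity for positive elements. The trick of realizing $M\cdot Q_{ij}-|T_{ij}|$ as $R_{ij}^{*}R_{ij}$ within $\mathbb{C}[X]$ itself is what converts the ambient positivity in $B(H_{X})$ into positivity preserved by the $*$-homomorphism $\phi$; this in turn relies crucially on the local compactness of the blocks (so that $R_{ij}$ stays locally compact) and on the propagation-zero structure furnished by the coloring step.
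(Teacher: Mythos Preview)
The paper does not actually prove this lemma; it simply cites \cite{GWY08}. Your overall strategy---coloring $Z$ by bounded geometry and writing $T=\sum_{i,j}P_iTP_j$ so that each $T_{ij}$ has at most one nonzero block per row and per column---is exactly the standard argument, and your first two paragraphs are correct.

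The gap is in the third paragraph. You assert that the partial isometry $W_{ij}$ from the polar decomposition and the support projection $Q_{ij}$ of $|T_{ij}|$ are locally compact and hence lie in $\mathbb{C}[X]$. Neither claim holds in general: a single block $T_{ij}(x,y)\in\mathcal{K}(H_0)$ may have infinite rank (e.g.\ $\mathrm{diag}(1,\tfrac12,\tfrac13,\dots)$), and then both the partial isometry in its polar decomposition and its range projection are infinite-rank, hence not compact. So $W_{ij},Q_{ij}\notin\mathbb{C}[X]$, and you cannot feed them to $\phi$. The fix is to bypass polar decomposition altogether. Because each column of $T_{ij}$ has at most one nonzero block, $T_{ij}^{*}T_{ij}$ is block-diagonal with compact blocks and therefore lies in the propagation-zero subalgebra of $\mathbb{C}[X]$, which is exactly $\ell^{\infty}(Z,\mathcal{K})$. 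This subalgebra is itself a $C^{*}$-algebra, so the restriction of $\phi$ to it is a $*$-homomorphism of $C^{*}$-algebras and is automatically contractive. Hence
\[
\|\phi(T_{ij})\|^{2}=\|\phi(T_{ij}^{*}T_{ij})\|\leqslant\|T_{ij}^{*}T_{ij}\|=\|T_{ij}\|^{2}\leqslant\|T\|^{2},
\]
and summing gives $\|\phi(T)\|\leqslant N_{2r}^{2}\,\|T\|$ as you wanted.
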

This result is essentially proved in \cite{GWY08} and allows us to define the maximal Roe algebra. 
\begin{definition}\label{def. 3.5}(Cf.\cite{GWY08}.)  Let $X$ be a proper metric space with bounded geometry. The maximal
Roe algebra of $X$, denoted by $C^{*}_{\max}(X)$, is the completion of $\mathbb{C}[X]$ with respect to the $C^{*}$-norm:
\[\|T\|_{\max}:=\sup\{\|\phi(T)\|_{B(H_{\phi})}\mid\phi:\mathbb{C}[X]\rightarrow B(H_{\phi}),\ \text{a}\ *\text{-representation}\}.\]
\end{definition}
With the assumption that $H_{X}=\ell^{2}(Z)\otimes H_{0}$, the maximal Roe algebra $C^{*}_{\max}(X)$ does not depend on the choice of countable dense subset $Z$ up to a non-canonical isomorphism. Moreover, if the metric spaces $X$ and $Y$ are coarsely equivalent,
their corresponding maximal Roe algebras $C^{*}_{\max}(X)$ and $C^{*}_{\max}(Y)$ are isomorphic via a non-canonical isomorphism. This result is still valid at the level of $K$-theory. One can refer to \cite{HRY93} for more details.\par 
In the rest of this subsection, we shall define the index map $\text{Ind}_{\max}$.\par 
Recall first that the $K$-homology groups $K_{i}(X) =
KK^{i}(C_{0}(X),\mathbb{C}) (i = 0, 1)$ for a proper metric space $X$ are generated by certain cycles (abstract elliptic operators) modulo
certain equivalence relations \cite{Kas75,Kas88,Bla98}:\par 
\begin{enumerate}
	\item [(1)] \hspace{0pt}a cycle for $K_{0}(X)$ is a pair $(H_{X},F)$, where $H_{X}$ is an $X$-module and $F$ is a bounded linear
	operator acting on $H_{X}$ such that $F^{*}F-I$ and $FF^{*}-I$ are locally compact, and $f F-Ff$
	is compact for all $f\in C_{0}(X)$;
	\item [(2)] \hspace{0pt} a cycle for $K_{1}(X)$ is a pair $(H_{X},F)$, where $H_{X}$ is an $X$-module and $F$ is a self-adjoint
	operator acting on $H_{X}$ such that $F^{2}-I$ is locally compact, and $fF-Ff$ is compact for all
	$f\in C_{0}(X)$.
\end{enumerate}\par 
In the above description of cycles for $K_{i}(X)$, the $X$-module $H_{X}$ can always be chosen to be ample and non-degenerate, and the equivalence relations on cycles are given by homotopy of the operators $F$,
unitary equivalence, and direct sum with ``degenerate" cycles, i.e. those cycles for which $Ff-fF$,
$f(F^{*}F-I)$ and so on, are not merely compact but actually zero \cite{Kas75, Kas88}.
\begin{definition}\label{def. 3.6} 
 Let $(H_{X},F)$ represent a cycle for $K_{0}(X)$ such that $H_{X}$ is a standard non-degenerate $X$-module. Let $\{U_{i}\}_{i}$ be a
locally finite (an open cover $\{U_{i}\}_{i}$ of $X$ is \emph{locally finite} if every compact subset $K\subseteq X$ is contained in only finitely many elements of the cover) and uniformly bounded (i.e. for any $R>0$, $diameter(U_{i})<R$ for all $i$). 
Let $\{f_{i}\}_{i}$ be a continuous partition of unity subordinate to the open cover $\{U_{i}\}_{i}$. Define \[F'=\sum_{i}f_{i}^{\frac{1}{2}}Ff_{i}^{\frac{1}{2}},\]
where the infinite sum converges in the strong operator topology. Note that $(H_{X},F')$ is a cycle in $K_{0}(X)$, and equivalent to  $(H_{X},F)$ via the homotopy $(H_{X},(1-t)F+tF')$, where $t\in[0,1]$.
Since $F'$ has finite propagation $R>0$, we have that $F'$
is a multiplier of $C^{*}_{\max}(X)$ and easy to verify that ${F'}^{2}-1\in C^{*}_{\max}(X)$. Thus, $F'$ is invertible modulo $C^{*}_{\max}(X)$. Hence, $F'$ give rise to an element, denoted by $\partial[F']$, in $K_{0}(C^{*}_{\max}(X))$, where 
\[\partial:K_{1}(M(C^{*}_{\max}(X))/C^{*}_{\max}(X))\rightarrow K_{0}(C^{*}_{\max}(X))\]
induced by the following six-term exact sequence
$$\begin{tikzcd}
		K_{0}(C^{*}_{\max}(X)) \arrow[r] & K_{0}(M(C^{*}_{\max}(X))) \arrow[r] & K_{0}(M(C^{*}_{\max}(X))/C^{*}_{\max}(X)) \arrow[d] \\
		K_{1}(M(C^{*}_{\max}(X))/C^{*}_{\max}(X)) \arrow[u,"\partial"] & K_{1}(M(C^{*}_{\max}(X))) \arrow[l] & K_{1}(C^{*}_{\max}(X)) \arrow[l]
\end{tikzcd}$$
where $M(C^{*}_{\max}(X))$ represents the multiplier algebra of $C^{*}_{\max}(X)$.
We define the index map $\text{Ind}_{\max}$ from $K_{0}(X)$ to $K_{0}(C^{*}_{\max}(X))$ by
\[\text{Ind}_{\max}([(H_{X},F)]):=\partial[F'].\]
More specifically, consider the product (Cf. \cite{Mil71})
\[W=\left(
\begin{array}{cc}
	I &F'\\
	0&I\\
\end{array}\right)\left(
\begin{array}{cc}
	I &0\\
	-(F')^{*}&I\\
\end{array}\right)\left(
\begin{array}{cc}
	I &F'\\
	0&I\\
\end{array}\right)\left(
\begin{array}{cc}
	0 &-I\\
	I&0\\
\end{array}\right)\]
in $M(C^{*}_{\max}(X))\otimes M_{2}(\mathbb{C})$, which is an invertible element that agrees with 
\[\left(\begin{array}{cc}
	0&F'\\
	-(F')^{*}&0\\
\end{array}\right)\]
modulo $C^{*}_{\max}(X)\otimes M_{2}(\mathbb{C})$. We then define
\[\partial[F']:=\left[W\left(
\begin{array}{cc}
	I &0\\
	0&0\\
\end{array}\right)W^{-1}\right]-\left[
\left(
\begin{array}{cc}
	I &0\\
	0&0\\
\end{array}\right)
\right]\in K_{0}(C^{*}_{\max}(X)).\]
Similarly,
one can define the index map from $K_{1}(X)$ to $K_{1}(C^{*}_{\max}(X))$ (see \cite{Yu95}).\end{definition}
\begin{remark}\label{rem. 3.7} 
 Let $X$ be a proper metric space with bounded geometry as above, and $Z$ a countable
dense subset of $X$ used to define $\mathbb{C}_{f}[X]$ as in Definition \ref{def. 3.3}. For any natural number $n>0$, let
$Z_{n}$ be a subset of $Z$ such that $d(x,x')>\frac{1}{2n}$
for distinct $x,x'\in Z_{n}$, and $d(x,Z_{n})\leqslant\frac{1}{n}$ for all
$x\in X$. Without loss of generality, we may assume $Z=\sum^{\infty}_{n=1}Z_{n}$. Let $\mathbb{C}[Z_{n}]$ be the $*$-algebra
of all bounded functions $T:Z_{n}\times Z_{n}\rightarrow K$ with finite propagation, i.e. there exists $R>0$ such
that $T(x,x')=0$ whenever $d(x,x')>R$ for all $x,x'\in Z_{n}$. Then $\mathbb{C}[Z_{n}]\subseteq \mathbb{C}_{f}[X]$. Moreover, there
exists a non-canonical $*$-isomorphism \cite{HRY93}
$\text{Ad}(U):\mathbb{C}[X] \rightarrow \mathbb{C}[Z_{n}]$ such that,
if $T\in \mathbb{C}[X]$ has propagation less than $R$, then the propagation of $(\text{Ad}(U))(T)$ is less than $R+\frac{2}{n}$.
For any $R>0$, let $\mu([(H_{X},T)])\in K_{0}(\mathbb{C}[X])$ be as in Definition \ref{def. 3.6}. Then
\[\text{Ad}(U)_{*}(\mu([(H_{X},T)]))\in K_{0}(\mathbb{C}[Z_{n}]),\]
which also defines an element in $K_{0}(\mathbb{C}_{f}[X])$ via the inclusion $\mathbb{C}[Z_{n}]\hookrightarrow \mathbb{C}_{f}[X]$. Note that the propagation of the element
\[(\text{Ad}(U))(W\left(
\begin{array}{cc}
	I &0\\
	0&0\\
\end{array}\right)W^{-1}-\left(
\begin{array}{cc}
	I &0\\
	0&0\\
\end{array}\right))\in \mathbb{C}_{f}[X]\otimes M_{2}(\mathbb{C})\]
is less that $6R+\frac{2}{n}$. Note also that $\text{Ad}(U)_{*}$ canonically induces the identity on $K_{*}(C^{*}_{\max}(X))
$.\end{remark}

\begin{definition}\label{def. 3.8}
 Let $X$ be a discrete metric space with bounded geometry. For each $d\geqslant 0$, the \textit{Rips
complex} $P_{d}(X)$ at scale $d$ is defined to be the simplicial polyhedron in which the set of vertices
is $X$, and a finite subset $\{x_{0},x_{1},\dots,x_{q}\}\subseteq X$ spans a simplex if and only if $d(x_{i},x_{j})\leqslant d$ for all
$0\leqslant i,j\leqslant q$.
The Rips complex $P_{d}(X)$ is equipped with the \textit{spherical metric} that is the maximal metric whose restriction to each simplex 
 $\{\sum^{q}_{i=0}t_{i}x_{i}\mid t_{i}\geqslant 0,\sum^{q}_{i=0}t_{i}=1\}$ is the metric obtained by identifying the simplex with $S^{q}_{+}$ via the map
\[\sum^{q}_{i=0}t_{i}x_{i}\mapsto(\frac{t_{0}}{\sqrt{\sum^{q}_{i=0}t^{2}_{i}}},\frac{t_{1}}{\sqrt{\sum^{q}_{i=0}t^{2}_{i}}},\dots,\frac{t_{q}}{\sqrt{\sum^{q}_{i=0}t^{2}_{i}}}
)\]
where $S^{q}_{+}:=\{(s_{0},s_{1},\dots,s_{q})\in\mathbb{R}^{q+1}\mid s_{i}\geqslant 0,\sum^{q}_{i=0}s_{i}=1\}$ is endowed with the standard Riemannian metric, defined as follows. For any $x,y\in P_{d}(X)$ and any semi-simplicial path (see \cite{WY20}):
\[\xi=(x_{0},y_{0},x_{1},y_{1},\dots,x_{n},y_{n})\]
connecting $x$ and $y$,
where $x_{0},\dots,x_{n},y_{0},\dots,y_{n}\in X$ with $x=x_{0}, y=y_{0}$, and $x_{i},y_{i}$ in the same simplex. The length of the path $\xi$ is defined by the formula
\[l(\xi):=\sum^{n}_{i=0}d_{S}(x_{i},y_{i})+\sum^{n-1}_{i=0}d_{X}(y_{i},x_{i+1}).\]
Then the spherical metric of $P_{d}(X)$ is defined by
\[d(x,y)=\inf\{l(\xi)\mid\xi\ \text{is a semi-simplicial path connecting}\ x \ \text{and} \ y \},\]
which induces the same topology as the weak topology of the simplicial
complex: a subset $S\subseteq P_{d}(X)$ is closed if and only if the intersection of $S$ with each simplex is
closed.
In addition, for any $d\geqslant 0$, $P_{d}(X)$ is coarsely equivalent to $X$ via the inclusion map. If $d\leqslant d'$,
then $P_{d}(X)$ is included in $P_{d'}(X)$ as a subcomplex via a simplicial map $i_{d'd}$, and induces a $*$-homomorphism from $C^{*}_{\max}(P_{d}(X))$ to $C^{*}_{\max}(P_{d'}(X))$. Furthermore, for $d\leqslant d'\leqslant d''$, the inclusion map $i_{d''d}$
and the composition $i_{d''d'}$ and $i_{d'd}$ induce the same homomorphism from $K_{*}(C^{*}_{\max}(P_{d}(X)))$ to $K_{*}(C^{*}_{\max}(P_{d''}(X)))$.
Passing to the inductive
limit, we obtain the assembly map
\[\mu_{\max}:\lim_{d\rightarrow\infty}K_{*}(P_{d}(X))\rightarrow \lim_{d\rightarrow\infty}K_{*}(C^{*}_{\max}(P_{d}(X)))\cong K_{*}(C^{*}_{\max}(X)).\]\end{definition}
\noindent\textbf{The maximal coarse Baum-Connes conjecture.} \textit{If $X$ is a discrete metric space with bounded
geometry, then the assembly map $\mu_{\max}$ is an isomorphism.}
 
Finally, we recall the definition of localization algebras \cite{Yu97}  and the relation between its $K$-theory
and the $K$-homology groups. 
\begin{definition}\label{def. 3.9}
 Let $X$ be a proper metric space.
\begin{enumerate}
\item [(1)]\hspace{0pt}The algebraic localization algebra, denoted by $\mathbb{C}_{L}[X]$, is defined to be the $*$-algebra of all bounded and
uniformly norm-continuous functions $g:\mathbb{R}_{+}\rightarrow \mathbb{C}[X]$
such that 
\[propagation(g(t))\rightarrow 0,\ \text{as}\ t\rightarrow \infty.\]
	\item [(2)]\hspace{0pt}
The maximal localization algebra $C^{*}_{L,\max}(X)$ is the
completion of $\mathbb{C}_{L}[X]$ 
with respect to the norm \[\|g\|_{\max}
:=\sup_{t\in\mathbb{R}_{+}}\|g(t)\|_{\max}.\]
\end{enumerate}
\end{definition}
We can define
the evaluation-at-zero homomorphism $e$ from
$C^{*}_{L,\max}(X)$ to $C^{*}_{\max}(X)$ by $e(g)=g(0)$ for any $g\in C^{*}_{L,\max}(X)$. There exists a local assembly map \cite{Yu97}: 
\[\mu_{L,\max}:\lim_{d\rightarrow\infty}K_{*}(P_{d}(X))\rightarrow \lim_{d\rightarrow\infty}K_{*}(C^{*}_{L,\max}(P_{d}(X))).\]\par 
The following result establishes the relation between the $K$-homology groups and the $K$-theory of localization algebras. 
\begin{theorem}\label{the. 3.10}(Cf. \cite{Yu97}.) For every finite-dimensional simplicial complex $X$ endowed with the
spherical metric, the local index map $\text{Ind}_{L,\max}:K_{*}(X)\rightarrow K_{*}(C^{*}_{L,\max}(X))$ is an isomorphism.
\end{theorem}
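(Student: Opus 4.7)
The plan is to prove Theorem \ref{the. 3.10} by induction on the dimension of the finite-dimensional simplicial complex $X$, combining a Mayer--Vietoris six-term exact sequence with strong Lipschitz homotopy invariance of $K_*(C^*_{L,\max}(\cdot))$, so as to reduce the problem to the trivial base case of a single point. Throughout, I would treat $K_*(\cdot)$ and $K_*(C^*_{L,\max}(\cdot))$ as covariant functors on finite simplicial complexes (with the spherical metric) under proper Lipschitz maps, and $\text{Ind}_{L,\max}$ as a natural transformation between them.

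For the base case, when $X$ is a single point every operator on the $X$-module automatically has zero propagation, so $C^*_{L,\max}(\mathrm{pt})$ is an algebra of bounded uniformly norm-continuous functions $\mathbb{R}_+\to\mathcal{K}$. A direct contraction $g\mapsto g((1-s)\,\cdot)$ provides a homotopy equivalence with $\mathcal{K}$, giving $K_*(C^*_{L,\max}(\mathrm{pt}))\cong K_*(\mathbb{C})\cong K_*(\mathrm{pt})$, and a direct inspection shows that $\text{Ind}_{L,\max}$ corresponds to the identity under these identifications. Next I would establish strong Lipschitz homotopy invariance: for a uniformly Lipschitz homotopy $F_t\colon X\to Y$ the induced maps $(F_0)_*$ and $(F_1)_*$ agree on $K_*(C^*_{L,\max})$. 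Following Yu, one builds an explicit homotopy of $\ast$-homomorphisms by reparametrising the $t$-direction and using partitions of unity, exploiting the vanishing of propagation as $t\to\infty$ so that the pushforwards are well defined on the algebra of vanishing-propagation operators; since every element involved at each time has finite propagation, Lemma \ref{lem. 3.4} ensures that the construction passes to the maximal completion. Consequently, for any Lipschitz contractible subcomplex (in particular any single simplex), $\text{Ind}_{L,\max}$ is an isomorphism by reduction to the base case.

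With these tools in hand I would set up Mayer--Vietoris and run the induction. For a closed decomposition $X=X_1\cup X_2$ with $X_0=X_1\cap X_2$ (made $\omega$-excisive after a small thickening), both theories admit natural six-term exact sequences and produce a commutative ladder
\[
\begin{tikzcd}[column sep=scriptsize]
\cdots \arrow[r] & K_*(X_0) \arrow[r]\arrow[d] & K_*(X_1)\oplus K_*(X_2) \arrow[r]\arrow[d] & K_*(X) \arrow[r]\arrow[d] & \cdots \\
\cdots \arrow[r] & K_*(C^*_{L,\max}(X_0)) \arrow[r] & K_*(C^*_{L,\max}(X_1))\oplus K_*(C^*_{L,\max}(X_2)) \arrow[r] & K_*(C^*_{L,\max}(X)) \arrow[r] & \cdots
\end{tikzcd}
\]
with all vertical arrows being $\text{Ind}_{L,\max}$. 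In the inductive step from dimension $n-1$ to $n$, I would write $X=X^{(n-1)}\cup N$, where $N$ is a small open neighbourhood of the union of the open $n$-simplices. Then $N$ is strongly Lipschitz homotopy equivalent to a disjoint union of points and $X^{(n-1)}\cap N$ is strongly Lipschitz homotopy equivalent to a subcomplex of dimension at most $n-1$. Both are covered by the inductive hypothesis together with homotopy invariance, and the five lemma forces the remaining vertical arrow to be an isomorphism.

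The principal obstacle is the existence of the Mayer--Vietoris sequence for $K_*(C^*_{L,\max}(\cdot))$, which arises from a short exact sequence
\[
0\longrightarrow C^*_{L,\max}(X_0)\longrightarrow C^*_{L,\max}(X_1)\oplus C^*_{L,\max}(X_2)\longrightarrow C^*_{L,\max}(X)\longrightarrow 0,
\]
whose exactness at the \emph{maximal} level is nontrivial and must be verified carefully using the $\omega$-excisive structure of the decomposition together with Lemma \ref{lem. 3.4}. The key point is that since the propagation of any $g(t)\in\mathbb{C}_L[X]$ tends to zero as $t\to\infty$, for large $t$ the operator $g(t)$ decomposes, modulo a vanishing tail, into pieces supported in $X_1$ and $X_2$ respectively, with overlap controlled by propagation alone. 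These decompositions respect arbitrary $\ast$-representations because at each fixed time the pieces still have finite propagation and hence are uniformly bounded across all representations by Lemma \ref{lem. 3.4}. Once this excision statement is in place, the inductive scheme above completes the argument; homotopy invariance, the base case, and the abstract five-lemma are standard.
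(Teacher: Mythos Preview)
The paper does not actually prove Theorem \ref{the. 3.10}; it is stated with a citation to \cite{Yu97} and used as a black box. Your proposal follows precisely the strategy of Yu's original argument --- induction on dimension via a Mayer--Vietoris sequence, strong Lipschitz homotopy invariance, and the zero-dimensional base case --- and the adaptation to the maximal completion via Lemma~\ref{lem. 3.4} is the correct additional ingredient. So your outline is sound and matches the standard proof the paper is citing; there is nothing further to compare against within this paper.
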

Consequently, if $X$ is a discrete metric space with bounded geometry, we have the following commutative diagram

\begin{center}
	\begin{tikzcd}
	&  & {\lim_{d\rightarrow\infty}K_{*}(C^{*}_{L,\max}(P_{d}(X)))} \arrow[d,"e_{*}"] \\
	{\lim_{d\rightarrow\infty}K_{*}(P_{d}(X))} \arrow[rr,"\mu_{\max}"] \arrow[rru,"\mu_{L,\max}"] &  & {\lim_{d\rightarrow\infty}K_{*}(C^{*}_{\max}(P_{d}(X)))}      \end{tikzcd}
\end{center}
and the maximal coarse Baum-Connes conjecture is a consequence of the result that the map
\[e_{*}:\lim_{d\rightarrow\infty}K_{*}(C^{*}_{L,\max}(P_{d}(X)))\rightarrow
\lim_{d\rightarrow\infty}K_{*}(C^{*}_{\max}(P_{d}(X)))\]
induced by the evaluation map on $K$-theory is an isomorphism.

\section{Reduction to the coarse disjoint union of metric spaces}\label{section 4}
As discussed previously in Section \ref{section 2}, the approach to establish Theorem \ref{the. 1.1} involves reducing the maximal coarse Baum-Connes conjecture for a discrete metric space with bounded geometry to the maximal coarse Baum-Connes conjecture at infinity for the separate disjoint union of a sequence of metric spaces with uniform bounded geometry. The whole process is divided into the following two parts.\par 
$\bullet$ \emph{Cutting the entire space into a coarse disjoint union of a sequence of subspaces.} Let $X$ denote a discrete metric space with bounded geometry that admits an A-by-FCE coarse fibration structure. We shall first prove that if the base space
$Y$ permits an $\omega$-excisive decomposition $Y=Y^{(0)}\cup Y^{(1)}$, where $Y^{(0)}$, $Y^{(1)}$ and $Y^{(0)}\cap Y^{(1)}$ are coarse disjoint unions of sequences of finite subspaces of $Y$, then the corresponding fiber spaces $p^{-1}(Y^{(0)})$ and $p^{-1}(Y^{(1)})$ forms 
a partition of $X$, and the pair $(p^{-1}(Y^{(0)}),p^{-1}(Y^{(1)}))$ remains $\omega$-excisive. Consequently, it suffices to prove Theorem \ref{the. 1.1} for $X=\bigsqcup_{n\in\mathbb{N}} X_{n}$ being a coarse disjoint union of a sequence of metric spaces with uniform bounded geometry. 

$\bullet$ \emph{Reducing the maximal coarse Baum-Connes conjecture to its counterpart ``at infinity".} We begin by introducing the maximal Roe algebra at infinity and its associated localization algebras to construct the assembly map at infinity. Subsequently, the maximal coarse Baum-Connes conjecture for a sequence of metric spaces $(X_{n})_{n\in\mathbb{N}}$ can be reduced to its counterpart at infinity. This case can be further simplified by confirming that the evaluation homomorphism from the $K$-theory groups of certain localization algebras to that of the corresponding maximal Roe algebra at infinity is an isomorphism, a fact which will be proved in Sections \ref{section 5}, \ref{section 6} and \ref{section 7}. This section serves to provide an overview of the reduction process.

\begin{definition}(Cf. \cite{HRY93}.)\label{lem. 4.7}
Let $X$ be a proper metric space, and let $A$ and $B$ be closed subspaces
with $X=A\cup B$. We say that $(A,B)$ is an $\omega$-excisive couple, or that $X = A\cup B$ is an $\omega$-excisive decomposition, if for each $R > 0$ there is some $S > 0$ such that
\[\text{Pen}(A;R)\cap \text{Pen}(B;R)\subseteq \text{Pen}(A\cap B ;S).\]
\end{definition}

 \begin{proposition}\label{prop. 4.8}
 Let $X$ be a discrete metric space with bounded geometry that admits an A-by-FCE coarse fibration structure. If the base space $Y$ can be partitioned as $Y=Y^{(0)}\cup Y^{(1)}$ such that each of $Y^{(0)}$, $Y^{(1)}$ and $Y^{(0)}\cap Y^{(1)}$ is a coarse disjoint union of a sequence of finite subspaces of $Y$, and the pair $(Y^{(0)}, Y^{(1)})$ is $\omega$-excisive, then their corresponding fiber spaces $p^{-1}(Y^{(0)})$, $p^{-1}(Y^{(1)})$ and $p^{-1}(Y^{(0)})\cap p^{-1}(Y^{(1)})$ remain a coarse disjoint union of sequences of subspaces of $X$. Moreover, the pair $(p^{-1}(Y^{(0)}),p^{-1}(Y^{(1)}))$ is $\omega$-excisive again.
 \end{proposition}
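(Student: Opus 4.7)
The plan is to prove the proposition in three steps, using the four conditions defining the A-by-FCE coarse fibration structure in a targeted way. First, a direct set-theoretic computation gives $p^{-1}(Y^{(0)})\cap p^{-1}(Y^{(1)})=p^{-1}(Y^{(0)}\cap Y^{(1)})$, so only the three preimages $p^{-1}(Y^{(0)})$, $p^{-1}(Y^{(1)})$, $p^{-1}(Y^{(0)}\cap Y^{(1)})$ are in play. To exhibit each as a coarse disjoint union, write $Y^{(j)}=\bigsqcup_{n} Y^{(j)}_n$ so that $p^{-1}(Y^{(j)})=\bigsqcup_n p^{-1}(Y^{(j)}_n)$ as sets, with $d_X$ restricted to each $p^{-1}(Y^{(j)}_n)$ equal to the original metric. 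Given $R>0$, condition (1) of Definition \ref{def. 2.4} (bornologous) supplies $S>0$ with $d_X(x,x')\leq R\Rightarrow d_Y(p(x),p(x'))\leq S$. Since $d_Y(Y^{(j)}_n,Y^{(j)}_m)\to\infty$ as $n+m\to\infty$ with $n\neq m$, the contrapositive gives $d_X(p^{-1}(Y^{(j)}_n),p^{-1}(Y^{(j)}_m))\to\infty$, and similarly for $Y^{(0)}\cap Y^{(1)}$, so each preimage is indeed a coarse disjoint union.

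Second, for $\omega$-excisiveness of $(p^{-1}(Y^{(0)}),p^{-1}(Y^{(1)}))$, fix $R>0$ and take $x\in\text{Pen}(p^{-1}(Y^{(0)});R)\cap\text{Pen}(p^{-1}(Y^{(1)});R)$. Choose $x_i\in p^{-1}(Y^{(i)})$ with $d_X(x,x_i)\leq R$, and apply the bornologous condition to obtain $S_0>0$ with $d_Y(p(x),p(x_i))\leq S_0$, placing $p(x)$ in $\text{Pen}(Y^{(0)};S_0)\cap\text{Pen}(Y^{(1)};S_0)$. The $\omega$-excisiveness of $(Y^{(0)},Y^{(1)})$ in $Y$ then yields $S_1>0$ (depending only on $S_0$) and some $y\in Y^{(0)}\cap Y^{(1)}$ with $d_Y(p(x),y)\leq S_1$; equivalently $x\in p^{-1}(B_Y(y,R'))$ for $R':=S_1+1$. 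Now invoke condition (4) of Definition \ref{def. 2.4} at scale $R'$: there is a uniform constant $C>0$ such that $p^{-1}(y)$ is a $C$-net in $p^{-1}(B_Y(y,R'))$ for every $y\in Y$. Hence some $x'\in p^{-1}(y)\subseteq p^{-1}(Y^{(0)}\cap Y^{(1)})=p^{-1}(Y^{(0)})\cap p^{-1}(Y^{(1)})$ satisfies $d_X(x,x')<C$, giving $x\in\text{Pen}(p^{-1}(Y^{(0)})\cap p^{-1}(Y^{(1)});C)$; taking $S=C$ completes the step.

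The main technical obstacle is the final lift: knowing that $p(x)$ is close to some $y\in Y^{(0)}\cap Y^{(1)}$ does not automatically produce a nearby point of $X$ above $y$, since $p$ is only assumed bornologous (one-sided control), not coarsely Lipschitz with a coarse inverse. Condition (4) of Definition \ref{def. 2.4} is tailored precisely to supply such a lift with the constant $C$ depending only on $R'$; this is exactly the reason, as indicated in the remark following Definition \ref{def. 2.4}, that condition (4) has been strengthened relative to the coarse fibration notion in \cite{GLWZ23}. Conditions (2) and (3) (fibred coarse embeddability of $Y$ and equi-Property A of the fibers) play no role in this proposition, so the argument isolates the geometric content of conditions (1) and (4) of the A-by-FCE structure.
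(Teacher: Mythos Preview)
Your proof is correct and follows essentially the same approach as the paper: both arguments use the bornologous property (condition (1)) to push penumbras down to $Y$, apply the assumed $\omega$-excisiveness of $(Y^{(0)},Y^{(1)})$ there, and then invoke condition (4) to lift a point of $Y^{(0)}\cap Y^{(1)}$ back up to $X$ within a uniformly bounded distance. Your treatment of the coarse disjoint union of the preimages via the contrapositive of the bornologous condition is also the right idea.

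One small comparative remark: the paper's proof, as written, actually \emph{constructs} a specific partition $Y=Y^{(0)}\cup Y^{(1)}$ using growing annuli $Y_n=\{y:n^3-n\leqslant d(y,y_0)\leqslant (n+1)^3+(n+1)\}$ and verifies $\omega$-excisiveness for that particular choice, rather than taking an arbitrary partition as hypothesis. This is because what is really needed downstream (in Proposition~\ref{prop. 4.22}) is the existence of at least one such partition. Your argument proves precisely the stated proposition (transfer of the properties for any partition satisfying the hypotheses), which is logically cleaner and slightly more general; the paper's version mixes existence with transfer. Both are valid, and your emphasis that conditions (2) and (3) are irrelevant here is exactly right.
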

\begin{proof}[\rm\textbf{Proof.}] Since the metric space $X$ admits an A-by-FCE coarse fibration structure, there exist a discrete metric space $Y$ with bounded geometry and a surjective map $p:X\rightarrow Y$ satisfying conditions (1)-(4) in Definition \ref{def. 2.4}. We fix a point $y_{0}\in Y$ and define 
\[Y_{n}:=\{y\in Y\mid n^{3}-n\leqslant d(y,y_{0})\leqslant (n+1)^{3}+(n+1)\}\]for all $n\in\mathbb{N}$. Then the base space $Y$ can be partitioned as a union of 
\[Y^{(0)}=\bigsqcup_{n:\text{even}}Y_{n}\quad \text{and} \quad Y^{(1)}=\bigsqcup_{n:\text{odd}}Y_{n}. \]
Denote by $X^{(0)}$ and $ X^{(1)}$ their corresponding family of fiber spaces, i.e.
\[X^{(0)}=p^{-1}(Y^{(0)})\quad \text{and}\quad X^{(1)}=p^{-1}(Y^{(1)}),\]
then we have that $X=X^{(0)}\cup X^{(1)}$, and each of $X^{(0)}$, $X^{(1)}$ and $X^{(0)}\cap X^{(1)}$ is a coarse disjoint union of fiber spaces, i.e.
\[X^{(0)}=\bigsqcup_{n:\text{even}}p^{-1}(Y_{n});
\quad X^{(1)}=\bigsqcup_{n:\text{odd}}p^{-1}(Y_{n});\quad X^{(0)}\cap X^{(1)}=\bigsqcup_{n\in\mathbb{N}}(p^{-1}(Y_{n})\cap p^{-1}(Y_{n+1})).\]
Moreover, for any
$R > 0$, choose an integer $n_{R}>R$ and suppose that $y\in \text{Pen}(Y^{(0)}
;R)\cap \text{Pen}(Y^{(1)};R)$. It follows from \cite{CWY13} that there exists an interger $S=(n_{R}+1)^{3}$ such that $y\in\text{Pen}(Y^{(0)}\cap Y^{(1)};S)$. Therefore, the pair $(Y^{(0)},Y^{(1)})$ is ``$\omega$-excisive". Next, we consider the associated fiber spaces.


Take an arbitrary point $x\in p^{-1}(y) \subseteq \text{Pen}(X^{(0)};R)\cap \text{Pen}(X^{(1)};R) $, then \[y=p(x)\in \text{Pen}(Y^{(0)};\tilde{R})\cap \text{Pen}(Y^{(1)};\tilde{R}).\] Since the pair $(Y^{(0)},Y^{(1)})$ is $\omega$-excisive, we have that $y\in \text{Pen}(Y^{(0)}\cap Y^{(1)};S)$. That is, there exists $b\in Y^{(0)}\cap Y^{(1)}$ such that $d(y,b)\leqslant S$. Note that the collection of fiber spaces $\{p^{-1}(b)\}_{b\in Y^{(0)}\cap Y^{(1)}} $ is uniformly coarsely equivalent to the family of fiber spaces $\{p^{-1}(B_{Y}(b,S))\}_{b\in Y^{(0)}\cap Y^{(1)}}$ via the embedding maps \[\{f_{b}:p^{-1}(b)\rightarrow p^{-1}(B_{Y}(b,S))\}_{b\in Y^{(0)}\cap Y^{(1)}}.\] Hence, there exists a constant $C>0$ such that the image of $f_b$ is exactly a $C$-net of the family of fibers $p^{-1}(B_{Y}(b,S))$ for any $b\in Y$. As a result, we have $d(x, p^{-1}(b))<C$, where $p^{-1}(b)\subseteq X^{(0)}\cap X^{(1)}$. That is, $x\in \text{Pen}(X^{(0)}\cap X^{(1)};C)$, and
the proof is completed.
\end{proof}

\begin{proposition}\label{prop. 4.22} To prove Theorem \ref{the. 1.1}, it suffices to prove it for 
$X=\bigsqcup_{n\in\mathbb{N}} X_{n}$ being a coarse disjoint union of a sequence of metric spaces with uniform bounded geometry and uniform A-by-FCE coarse fibration structure, where the base spaces are finite.
\end{proposition}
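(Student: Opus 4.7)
The plan is to iterate Proposition \ref{prop. 4.8} together with a Mayer–Vietoris argument for the maximal coarse assembly map. I would first apply Proposition \ref{prop. 4.8} to the annular decomposition of the base space $Y$ already constructed in its proof: choose a basepoint $y_0 \in Y$ and the annuli $Y_n = \{y\in Y:n^3-n \leqslant d(y,y_0)\leqslant (n+1)^3+(n+1)\}$, which are finite since $Y$ has bounded geometry. Setting $Y^{(0)}=\bigsqcup_{n\text{ even}}Y_n$, $Y^{(1)}=\bigsqcup_{n\text{ odd}}Y_n$, Proposition \ref{prop. 4.8} yields the $\omega$-excisive decomposition $X = X^{(0)} \cup X^{(1)}$, where
\[X^{(0)}=\bigsqcup_{n\text{ even}}p^{-1}(Y_n),\quad X^{(1)}=\bigsqcup_{n\text{ odd}}p^{-1}(Y_n),\quad X^{(0)}\cap X^{(1)}=\bigsqcup_{n\in\mathbb{N}}p^{-1}(Y_n\cap Y_{n+1}).\]
Each of these three spaces is therefore a coarse disjoint union of a sequence of subspaces whose base spaces $Y_n$ (respectively $Y_n\cap Y_{n+1}$) are finite.

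Next I would verify that the three subspaces $X^{(0)}$, $X^{(1)}$, $X^{(0)}\cap X^{(1)}$ inherit a uniform A-by-FCE coarse fibration structure. The surjective bornologous map $p$ restricts to each piece, conditions (1) and (3) of Definition \ref{def. 2.4} are hereditary under passing to preimages of subsets because the fibers themselves are unchanged and the equi-Property A constants are uniform in $y\in Y$, condition (4) is inherited with the same uniform coarse equivalence data, and condition (2)—the fibred coarse embedding of the base into Hilbert space—restricts to subspaces. Thus the three pieces satisfy the hypotheses of the proposition to be proven, with finite base spaces.

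The core reduction step is then a Mayer–Vietoris argument. Since $(X^{(0)},X^{(1)})$ is an $\omega$-excisive decomposition, by \cite{HRY93} there is a Mayer–Vietoris six-term exact sequence
\[\cdots\to K_*(C^*_{\max}(P_d(X^{(0)}\cap X^{(1)})))\to K_*(C^*_{\max}(P_d(X^{(0)})))\oplus K_*(C^*_{\max}(P_d(X^{(1)})))\to K_*(C^*_{\max}(P_d(X)))\to\cdots\]
and, likewise, a Mayer–Vietoris sequence for $\lim_d K_*(P_d(-))$ on the topological side, with both compatible via the assembly map. Passing to the inductive limit in $d$ and invoking the five lemma, if $\mu_{\max}$ is an isomorphism for each of $X^{(0)}$, $X^{(1)}$ and $X^{(0)}\cap X^{(1)}$, then it is an isomorphism for $X$. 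Since each of the three pieces is a coarse disjoint union of subspaces whose base spaces are finite and which carry uniform A-by-FCE coarse fibration data, the reduction is complete.

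The main obstacle is to justify the maximal Mayer–Vietoris sequence carefully: the Mayer–Vietoris sequence of \cite{HRY93} is stated for the reduced Roe algebra, so I would either appeal to its maximal analogue (which follows from the same $\omega$-excisive decomposition technique since the relevant ideal and quotient identifications go through for $C^*_{\max}$ by the universal property and Lemma \ref{lem. 3.4}) or sketch the argument that the maximal Roe algebra of $X$ fits into a pullback/short exact sequence built from the algebras of $X^{(0)}$, $X^{(1)}$, and $X^{(0)}\cap X^{(1)}$. A second, minor, technical point is checking that the inductive limits over the Rips parameter $d$ commute with the Mayer–Vietoris six-term exact sequence, which follows from the exactness of $\varinjlim$ on abelian groups.
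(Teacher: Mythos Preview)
Your approach is essentially the same as the paper's: use the annular $\omega$-excisive decomposition from Proposition \ref{prop. 4.8} and run a Mayer--Vietoris argument on both sides of the assembly map, then apply the five lemma. The paper also adds nothing beyond what you sketch for the maximal Mayer--Vietoris sequence, simply citing \cite{HRY93}.

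There is one technical point you glossed over that the paper handles explicitly. On the $K$-homology side you cannot directly write $P_d(X)=P_d(X^{(0)})\cup P_d(X^{(1)})$: for a fixed $d$ there may be simplices near the ``seams'' $p^{-1}(Y_n\cap Y_{n+1})$ with small $n$ whose vertices are not all contained in one of the two pieces, so the Rips complexes of the pieces do not cover $P_d(X)$. The paper fixes this by choosing, for each $d$, a large $l_d>0$, setting $K_d:=B_X(p^{-1}(y_0),l_d)$, and decomposing
\[P_d(X)=P_d(K_d\cup X^{(0)})\cup P_d(K_d\cup X^{(1)}),\]
which is genuinely $\omega$-excisive and whose pieces are coarsely equivalent to $X^{(0)}$ and $X^{(1)}$. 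Since $K_d$ is bounded (its base $B_Y(y_0,l_d)$ is finite), this does not affect the coarse type of the pieces, and after passing to the limit $d\to\infty$ the added compact piece is harmless. You should insert this adjustment before invoking the $K$-homology Mayer--Vietoris sequence.
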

\begin{proof}[\rm\textbf{Proof.}] 
With the notations and results presented in Proposition \ref{prop. 4.8} and its proof, the metric space $X$ admits an $\omega$-excisive decomposition $X=X^{(0)}\cup X^{(1)}$, where
\[X^{(0)}=\bigsqcup_{n:\text{even}}p^{-1}(Y_{n})\quad\text{and}\quad X^{(1)}=\bigsqcup_{n:\text{odd}}p^{-1}(Y_{n}).\]
Applying the Mayer-Vietoris argument in \cite{HRY93} we obtain the following six-term exact sequence 
\begin{center}

\begin{tikzcd}
{K_{0}(C^{*}_{\max}(X^{(0)}\cap X^{(1)}))} \arrow[r] & {{K_{0}(C^{*}_{\max}(X^{(0)}))\oplus {K_{0}(C^{*}_{\max}(X^{(1)}))}}} \arrow[r] & {{K_{0}(C^{*}_{\max}(X))}} \arrow[d] \\
{{K_{1}(C^{*}_{\max}(X))}} \arrow[u] & {{K_{1}(C^{*}_{\max}(X^{(0)}))\oplus {K_{1}(C^{*}_{\max}(X^{(1)}))}}} \arrow[l] & {{K_{1}(C^{*}_{\max}(X^{(0)}\cap X^{(1)})})} \arrow[l]
\end{tikzcd}
\end{center}
\normalsize
On the other hand, for each $d\geqslant 0$, there exists a sufficent large $l_{d}>0$ such that, for any fiber $x\in X\setminus B_{X}(p^{-1}(y_{0}),l_{d})$, the ball $B_{X}(x,d)$ is contained in either $X^{(0)}$ or $X^{(1)}$. Denote $K_{d}:=B_{X}(p^{-1}(y_{0}),l_{d})$. Then the Rips complex $P_{d}(X)$ can be partitioned as 
\[P_{d}(X)=P_{d}(K_{d}\cup X^{(0)})\cup P_{d}(K_{d}\cup X^{(1)}).\]
This is again an ``$\omega$-excisive" decomposition, into subspaces that are coarsely equivalent to
$X^{(0)}$ and $X^{(1)}$ respectively. 
Note that if the scale $d$ is not large enough, there exists a fiber space $p^{-1}(Y_{n})$ for some $n\in\mathbb{N}$ such that $P_{d}(X_{n}\cap X^{(0)})\cup P_{d}(X_{n}\cap X^{(1)})$ is not agree with $P_{d}(X_{n})$. We take $l_{d}>0$ sufficient large to get around this and obtain another exact Mayer-Vietoris sequence 

\begin{center}
\small
\begin{tikzcd}
K_{0}(P_{d}(K_{d}\cup X^{(0)})\cap P_{d}(K_{d}\cup X^{(1)})) \arrow[r] & K_{0}(P_{d}(K_{d}\cup X^{(0)}))\oplus K_{0}(P_{d}(K_{d}\cup X^{(1)}))\arrow[r] & {K_{0}(P_{d}(X))} \arrow[d] \\
{K_{1}(P_{d}(X))} \arrow[u] & K_{1}(P_{d}(K_{d}\cup X^{(0)}))\oplus K_{1}(P_{d}(K_{d}\cup X^{(1)})) \arrow[l] & K_{1}(P_{d}(K_{d}\cup X^{(0)})\cap P_{d}(K_{d}\cup X^{(1)})) \arrow[l]
\end{tikzcd}
\end{center}
\normalsize
Connecting these two Mayer-Vietoris 
sequences with assembly maps, and passing to the inductive limit as $d\rightarrow \infty$, the proposition holds.\end{proof}

Now we proceed to the second part mentioned at the beginning of this section, and introduce the definition of the maximal Roe algebra at infinity. Let
$(X_{n})_{n\in\mathbb{N}}$ be a sequence of metric spaces with uniform bounded geometry and uniform A-by-FCE coarse fibration structure, where the base spaces are finite.
For each $n\in\mathbb{N}$, let $P_{d}(X_{n})$ be the Rips complex of $X_{n}$ at scale $d\geqslant 0$ endowed with the spherical metric. Choose a countable dense subset $Z_{d}\subset P_{d}(X_{n})$ with $Z_{d}\subseteq Z_{d'}$ for $d\leqslant d'$. Denote $Z_{d,n}=Z_{d}\cap P_{d}(X_{n})$ for all $d\geqslant 0$ and $n\in \mathbb{N}$.
\begin{definition}\label{def. 4.1}
 For each $d\geqslant 0$, define the algebraic Roe algebra at infinity $\mathbb{C}_{u,\infty}[(P_{d}(X_{n}))_{n\in\mathbb{N}}]$ to be the set of all equivalence classes
$T = [(T^{(0)}
,\dots,T^{(n)},\dots)]$ of sequences $(T^{(0)}
,\dots,T^{(n)},\dots)$ described as follows

\begin{enumerate}
	\item [(1)]\hspace{0pt}$T^{(n)}$ is a bounded functions from $Z_{d,n}\times Z_{d,n}$ to $\mathcal{K}$ for any $n\in\mathbb{N}$ such that 
	\[\sup_{n\in\mathbb{N}} \sup_{x,x'\in Z_{d,n}}\|T^{(n)}(x,x')\|_{\mathcal{K}}<\infty; \]
	\item [(2)]\hspace{0pt}for each $n\in\mathbb{N}$ and any bounded subset $B\subset P_{d}(X_{n})$, the set
	\[\{(x,x')\in (B\times B)\cap (Z_{d,n}\times Z_{d,n})\mid T^{(n)}(x,x')\neq 0 \}\]
	is finite;
	\item [(3)]\hspace{0pt}there exists $L>0$ such that 
	\[\#\{x'\in Z_{d,n}\mid T^{(n)}(x,x')\neq 0\}<L \quad\text{and}\quad 
	\#\{x'\in Z_{d,n}\mid T^{(n)}(x',x)\neq 0\}<L\]
	for all $x\in Z_{d,n},n\in\mathbb{N};$
	\item [(4)]\hspace{0pt}there exists $R>0$ such that $T^{(n)}(x,x')=0$ whenever $d(x,x')>R$ for any $x,x'\in Z_{d,n},n\in\mathbb{N}.$
\end{enumerate}
\end{definition}
The equivalence relation $\thicksim$ on these sequences is defined by $(T^{(0)},\dots,T^{(n)},\dots)\thicksim(S^{(0)},\dots,S^{(n)},\dots)$ if and only if
$\lim_{n\rightarrow \infty}\sup_{x,x'\in Z_{d,n}}{\|T^{(n)}(x,x')-S^{(n)}(x,x')\|}_{\mathcal{K}}=0$.
Viewing each $T^{(n)}$ as $Z_{d,n}\times Z_{d,n}$ matrices, 
then
$\mathbb{C}_{u,\infty}[(P_{d}(X_{n}))_{n\in\mathbb{N}}]$ is made into a $*$-algebra by
using the usual matrix operations.
 The maximal Roe algebra at infinity, denoted by \[C^{*}_{u,\max,\infty}((P_{d}(X_{n}))_{n\in\mathbb{N}}),\] is defined to be the completion of $\mathbb{C}_{u,\infty}[(P_{d}(X_{n}))_{n\in\mathbb{N}}]$ with respect to
the maximal norm
\[\|T\|_{\max}:=\sup\{\|\phi(T)\|_{B(H_{\phi})}\mid\phi:\mathbb{C}_{u,\infty}[(P_{d}(X_{n}))_{n\in\mathbb{N}}]\rightarrow B(H_{\phi}), \text{a}\ *\text{-representation}\}.\]
This norm is well-defined since the sequence of metric spaces $(X_{n})_{n\in\mathbb{N}}$ has uniform bounded geometry. Moreover, the sequence $(X_{n})_{n\in\mathbb{N}}$ is uniformly coarsely equivalent to the sequence of associated Rips complexes $(P_{d}(X_{n}))_{n\in\mathbb{N}}$, so that
$C^{*}_{u,\max,\infty}((P_{d}(X_{n}))_{n\in\mathbb{N}})$ is isomorphic to $C^{*}_{u,\max,\infty}((X_{n})_{n\in\mathbb{N}})$ via a non-canonical isomorphism. 
\begin{definition}
Let $X=\bigsqcup_{n\in\mathbb{N}}X_{n}$ be the coarse disjoint union of a sequence of metric spaces with uniform bounded geometry. The geometric ideal of $\mathbb{C}[P_{d}(X)]$ generated by $X_{0}\in\{X_{n}\}_{n\in\mathbb{N}}$, denoted by $\mathbb{C}[P_{d}(X),X_{0}]$, is defined to be the set of all locally compact, finite propagation operators $T$ on geometric module $H_{X}$ whose support is contained in
$\text{Pen}(X_{0}; R_{T})\times \text{Pen}(X_{0}; R_{T})$ for some $R_{T} > 0$.
\end{definition}
We define $C^{*}_{\max}(P_{d}(X),X_{0})$ to be the completion of $\mathbb{C}[P_{d}(X),X_{0}]$ with respect to the norm induced from $C^{*}_{\max}(P_{d}(X))$, which is exactly the maximal norm of $\mathbb{C}[P_{d}(X),X_{0}]$ since $X_0$ has Property A. Note that $C^{*}_{\max}(P_{d}(X),X_{0})$ is a closed two-sided ideal in $C^{*}_{\max}(P_{d}(X))$, then using the universal property of the maximal norm, we have the following result. The reader
is referred to \cite{OOY09} for some relative discussion.

\begin{lemma}\label{lem. 4.2}
 Let $X=\bigsqcup_{n\in\mathbb{N}}X_{n}$ be the coarse disjoint union of a sequence of metric spaces
with uniform bounded geometry. For each $d\geqslant 0$, there is a short exact sequence
\[0\rightarrow C^{*}_{\max}(P_{d}(X),X_{0})\rightarrow C^{*}_{\max}(P_{d}(X))\rightarrow C^{*}_{u,\max,\infty}((P_{d}(X_{n}))_{n\in\mathbb{N}})\rightarrow 0.
\]
such that the inclusion $C^{*}_{\max}(P_{d}(X),X_{0})\hookrightarrow C^{*}_{\max}(P_{d}(X))$ induces an injection on $K$-theory.\qed
\end{lemma}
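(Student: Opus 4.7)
The plan is to construct a surjective $*$-homomorphism $\pi : C^{*}_{\max}(P_{d}(X)) \to C^{*}_{u,\max,\infty}((P_{d}(X_{n}))_{n \in \mathbb{N}})$ whose kernel is $C^{*}_{\max}(P_{d}(X), X_{0})$, and then to deduce the $K$-theoretic injection from the Property A hypothesis on $X_{0}$.

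First, I would work at the algebraic level. Given $T \in \mathbb{C}_{f}[P_{d}(X)]$ of finite propagation $R$, let $T^{(n)}$ denote its compression to the block $P_{d}(X_{n}) \times P_{d}(X_{n})$. Since $d(X_{n}, X_{m}) \to \infty$ as $n + m \to \infty$ with $n \neq m$, only finitely many pairs $(n,m)$ with $n \neq m$ satisfy $d(X_{n}, X_{m}) \leqslant R$, so $T$ decomposes as $T = \sum_{n} T^{(n)} + T_{\text{mix}}$ where the mixing term is supported on a finite ``mixing region''. The assignment $T \mapsto [(T^{(n)})_{n}]$ defines a $*$-homomorphism from $\mathbb{C}_{f}[P_{d}(X)]$ into $\mathbb{C}_{u,\infty}[(P_{d}(X_{n}))_{n \in \mathbb{N}}]$ (the defining conditions of Definition \ref{def. 4.1} hold by uniform bounded geometry together with the uniform bounds on $T$), and it extends to the maximal completions via the universal property of $\|\cdot\|_{\max}$: for any $*$-representation $\phi$ of the codomain, $\phi \circ \pi$ is a $*$-representation of $\mathbb{C}_{f}[P_{d}(X)]$, and hence $\|\phi(\pi(T))\| \leqslant \|T\|_{\max}$.

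Next, for exactness, the inclusion $C^{*}_{\max}(P_{d}(X), X_{0}) \subseteq \ker \pi$ is immediate, since an operator supported in $\text{Pen}(X_{0}; R_{T}) \times \text{Pen}(X_{0}; R_{T})$ meets only finitely many $X_{n}$ (by the coarse disjoint union property) and thus has vanishing tail. For the reverse inclusion I would factor $\pi$ through $\tilde{\pi} : C^{*}_{\max}(P_{d}(X)) / C^{*}_{\max}(P_{d}(X), X_{0}) \to C^{*}_{u,\max,\infty}$ and show this induced map is isometric: every $*$-representation of $\mathbb{C}_{u,\infty}$ pulls back through $\pi$ to a representation of $\mathbb{C}_{f}[P_{d}(X)]$ that annihilates the algebraic ideal and therefore descends to the quotient, matching the supremum defining $\|\cdot\|_{\max}$ on both sides. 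Surjectivity of $\pi$ is obtained by lifting a representative $(S^{(n)})_{n}$ to its block-diagonal sum $\bigoplus_{n} S^{(n)}$, a locally compact finite-propagation operator in $\mathbb{C}[P_{d}(X)]$ that maps to $[(S^{(n)})_{n}]$.

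For the $K$-theoretic injection, my plan is to realize $C^{*}_{\max}(P_{d}(X), X_{0})$ as the inductive limit of $C^{*}_{\max}(\text{Pen}(X_{0}; R))$ as $R \to \infty$. Each penumbra $\text{Pen}(X_{0}; R)$ meets only finitely many $X_{n}$ (by the coarse disjoint union hypothesis) and inherits Property A from $X_{0}$, so its maximal and reduced Roe algebras coincide; the inclusion of such a reduced Roe subalgebra into $C^{*}(P_{d}(X))$ is injective on $K$-theory by the standard comparison argument for inclusions of coarse subspaces (via Morita equivalence with a net inside the penumbra and a Mayer-Vietoris decomposition analogous to the one in Proposition \ref{prop. 4.22}), and Property A propagates this injectivity to the inductive limit and across the maximal completion. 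The main obstacle I anticipate is the reverse inclusion in the kernel identification: the algebraic observation that $\pi(T) = 0$ forces $\|T^{(n)}\|_{\mathcal{K}} \to 0$ does not by itself imply a C*-algebraic containment $T \in C^{*}_{\max}(P_{d}(X), X_{0})$, and it is precisely the Property A assumption on $X_{0}$ that lets one control the maximal norm on $\mathbb{C}[P_{d}(X), X_{0}]$ so that it agrees with the induced norm from $C^{*}_{\max}(P_{d}(X))$, which is the compatibility needed for the short exact sequence to be genuinely exact.
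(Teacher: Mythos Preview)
Your approach to the short exact sequence is correct and matches what the paper indicates: the text preceding the lemma notes that $C^*_{\max}(P_d(X),X_0)$ is a closed two-sided ideal, invokes the universal property of the maximal norm, and defers to \cite{OOY09}; the lemma itself carries no proof beyond the \qed. Your construction of $\pi$, surjectivity via block-diagonal lifts, and kernel identification are on the right track. One refinement worth making explicit: the algebraic kernel of $\pi$ strictly contains $\mathbb{C}[P_d(X),X_0]$ (it consists of all $T$ with $\sup_{x,x'}\|T^{(n)}(x,x')\|\to 0$, not only those supported on finitely many blocks), but their closures in $C^*_{\max}(P_d(X))$ coincide because any such $T$ is a max-norm limit of its finite truncations by Lemma~\ref{lem. 3.4}. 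This is what makes your $\tilde\pi$ isometric, and it does \emph{not} require Property~A---contrary to what your final paragraph suggests. The paper's remark that Property~A forces the intrinsic maximal norm on the ideal to agree with the induced one is a side observation; it is irrelevant to exactness once $C^*_{\max}(P_d(X),X_0)$ is \emph{defined} as the closure inside $C^*_{\max}(P_d(X))$, as the paper does.

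Your argument for the $K$-theoretic injection has a genuine gap. You pass to reduced Roe algebras of the penumbras via Property~A and then appeal to injectivity of the inclusion into the \emph{reduced} $C^*(P_d(X))$, whereas the target in the lemma is $C^*_{\max}(P_d(X))$. Property~A of $\text{Pen}(X_0;R)$ identifies $C^*_{\max}(\text{Pen})$ with $C^*(\text{Pen})$, but says nothing about the ambient maximal Roe algebra of all of $X$, which in general differs from the reduced one. Moreover, the claim that coarse-subspace inclusions induce $K$-theory injections for Roe algebras is not a standard fact and can fail; Remark~\ref{rem. 4.3} explicitly notes that the analogue of the lemma is false in the reduced setting. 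The paper does not supply this step either, again pointing to \cite{OOY09}, where the injection is obtained by an argument specific to the maximal completion rather than by reducing to Property~A.
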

\begin{remark} \label{rem. 4.3}
Note that the above result is false if we consider the reduced Roe algebra at infinity, which is denoted by $C^{*}_{u,\infty}((P_{d}(X_{n}))_{n\in\mathbb{N}})$, and defined to be the completion of $\mathbb{C}_{u,\infty}[(P_{d}(X_{n}))_{n\in\mathbb{N}}]$ concerning 
the reduced norm (Cf. \cite{DGWY25})
\[\|T\|_{\text{red}}=\inf\{\|(T^{(n)}+S^{(n)})_{n\in\mathbb{N}}\|:(S^{(n)})_{n\in\mathbb{N}}\ \text{is a ghost operator}\}.\]
It follows that $C^{*}_{u,\infty}((P_{d}(X_{n}))_{n\in\mathbb{N}})$ is the quotient algebra of the Roe algebra over the ghost ideal (see \cite{Roe03}), which is contradictory with Lemma \ref{lem. 4.2}, and also the main reason for considering the maximal version of the coarse Baum-Connes conjecture in this study.
\end{remark}

For each $d\geqslant 0$, we recall that the Rips complex $P_{d}(X)$
at scale $d$ is the simplicial polyhedron in which the set of vertices is $X$, and a finite subset $\{x_{0},x_{1},\dots,x_{q}\}\subseteq X$ spans a simplex if and only if $d(x_{i},x_{j})\leqslant d$ for all
$0\leqslant i,j\leqslant q$. Thus, there exists a sufficiently large $N_{d}\in\mathbb{N}$ such that $d(X_{n},X_{m})>d$ provided $n,m\geqslant N_{d}$. We have the following direct sum decomposition
\[P_{d}(X)=P_{d}(\cup^{N_{d}-1}_{n=0}X_{n})\oplus \cup^{\infty}_{n=N_{d}}P_{d}(X_{n}),\]while at $K$-homology level
\[K_{*}(P_{d}(X))=K_{*}(P_{d}(\cup^{N_{d}-1}_{n=0}X_{n}))\oplus \prod^{\infty}_{n=N_{d}}K_{*}(P_{d}(X_{n})),\]
which canonically induces the following short exact sequence

\begin{center}
\begin{tikzcd}
{0} \arrow[r] & {{K_{*}(P_{d}(\cup^{N_{d}-1}_{n=0}X_{n}))\oplus \oplus^{\infty}_{n=N_{d}}K_{*}(P_{d}(X_{n}))}} \arrow[r] & {K_{*}(P_{d}(X))} \arrow[r] & {{\frac{\prod^{\infty}_{n=0}K_{*}(P_{d}(X_{n}))}{\oplus^{\infty}_{n=0}K_{*}(P_{d}(X_{n}))}}}\arrow[r] 
& {0} .
\end{tikzcd}
\end{center}
In addition, it follows from Definition \ref{def. 3.6} that the individual assembly map 
\[\mu_{\max}:K_{*}(P_{d}(X_{n}))\rightarrow K_{*}(C^{*}_{\max}(P_{d}(X_{n})))\]
for each $n\in\mathbb{N}$ can be defined by $\mu_{\max}([(H_{X},F)])=\partial ([(H_{X},F)])$ such that the propagation of $\partial ([(H_{X},F)])$ is
less than any given small $R > 0$ which is independent of $n\in\mathbb{N}$, and that $\|F\|\leqslant 1$. Consequently,
we obtain the following assembly map at infinity
\[\mu_{\max,\infty}:{\frac{\prod^{\infty}_{n=0}K_{*}(P_{d}(X_{n}))}{\oplus^{\infty}_{n=0}K_{*}(P_{d}(X_{n}))}}\rightarrow {K_{*}(C^{*}_{u,\max,\infty}((P_{d}(X_{n}))_{n\in\mathbb{N}}))} \]
defined by the formula \[\mu_{\max,\infty}[([(H_{P_{d}(X_{0})},F^{(0)})],[(H_{P_{d}(X_{1})},F^{(1)})],\dots)]=[(\partial[(H_{P_{d}(X_{0})},F^{(0)})],\partial[(H_{P_{d}(X_{1})},F^{(1)})],\dots)].\]
Combining Definition \ref{def. 3.6} and Lemma \ref{lem. 4.2}, we now have the following commutative diagram.

\begin{center}
	\begin{tikzcd}
	{0} \arrow[d]            &  & {0} \arrow[d] \\
	{K_{*}(P_{d}(\cup^{N_{d}-1}_{n=0}X_{n}))\oplus \oplus^{\infty}_{n=N_{d}}K_{*}(P_{d}(X_{n}))} \arrow[d] \arrow[rr] &  & {K_{*}(C^{*}_{\max}(P_{d}(X),X_{0}))} \arrow[d] \\
	K_{*}(P_{d}(X)) \arrow[d] \arrow[rr,"\mu_{\max}"] &  & {K_{*}(C^{*}_{\max}(P_{d}(X)))} \arrow[d] \\
	{\frac{\prod^{\infty}_{n=0}K_{*}(P_{d}(X_{n}))}{\oplus^{\infty}_{n=0}K_{*}(P_{d}(X_{n}))}} \arrow[d] \arrow[rr,"\mu_{\max,\infty}"] &  & {K_{*}(C^{*}_{u,\max,\infty}((P_{d}(X_{n}))_{n\in\mathbb{N}}))} \arrow[d] \\
	{0}                      &  & {0}   \end{tikzcd}
\end{center}
\begin{remark}
Passing to the inductive limit as $d$ tends to $\infty$, the top horizontal map is an isomorphism. Actually, any element in the direct sum, as a finite sequence, is supported on a sum below some fixed $k$ and, as $d$ tends to $\infty$, will eventually be absorbed into the first term on a single simplex. That is,
\[\lim_{d\rightarrow \infty}{K_{*}(P_{d}(\cup^{N_{d}-1}_{n=0}X_{n}))\oplus \oplus^{\infty}_{n=N_{d}}K_{*}(P_{d}(X_{n}))}=\lim_{d\rightarrow \infty}\lim_{k\rightarrow \infty}{K_{*}(P_{d}(\cup^{k}_{n=0}X_{n}))}.\]
Since $X_{0}$ is coarsely equivalent to $\cup^{k}_{n=0}X_{n}$ for some $k\in\mathbb{N}$, we have that
\[K_{*}(C^{*}_{\max}(P_{d}(X),X_{0}))=
\lim_{k\rightarrow\infty}K_{*}(C^{*}_{\max}(\cup^{k}_{n=0}X_{n})).\]
The top horizontal map is precisely equivalent to the maximal assembly map for the finite union $\cup^{k}_{n=0}X_{n}$ as $d$ goes to infinity. By conditions (3) and (4) of Definition \ref{def. 2.4} and the assumption that base spaces are finite, $\cup^{k}_{n=0}X_{n}$ for any $k\in\mathbb{N}$ admits Property A, we conclude that the top horizontal map is an isomorphism as $d$ tends to $\infty$( Cf.\cite{Yu00}).
Hence, to prove the maximal coarse Baum-Connes conjecture holds for $X$, that is, the assembly map $\mu_{\max}$ is an isomorphism, it suffices to prove that the assembly map at infinity $\mu_{\max,\infty}$ is an isomorphism by the five lemma. 
\end{remark}

\noindent\textbf{The maximal coarse Baum-Connes conjecture at infinity.} \textit{Let $X=\bigsqcup_{n\in\mathbb{N}}X_{n}$ be the coarse disjoint union of a sequence of metric spaces with uniform bounded geometry. Then the assembly map at infinity $\mu_{\max,\infty}$ is an isomorphism.}

\begin{definition}\label{def. 4.4}
 Let $d\geqslant 0$. Define the algebraic localization algebra at infinity 
$\mathbb{C}_{u, L, \infty}
[(P_{d}(X_{n}))_{n\in\mathbb{N}}]$ to be the
$\ast$-algebra of all bounded and uniformly norm-continuous functions
\[f:\mathbb{R}_{+}\rightarrow\mathbb{C}_{u, \infty}
[(P_{d}(X_{n}))_{n\in\mathbb{N}}]\]
such that $f(t)$ is of the form $[({f}^{(0)}(t),\dots,{f}^{(n)}(t),\dots)]$ for all $t\in\mathbb{R}_{+}$, where the family of functions $({f}^{(n)}(t))_{n\in\mathbb{N}, t\geqslant 0}$ satisfies the conditions in Definition \ref{def. 4.1} with uniform constants,
and there exists a bounded function $R(t):\mathbb{R}_{+}\rightarrow\mathbb{R}_{+}$ with $\lim_{t\rightarrow\infty} R(t)=0$ such that
$({f}^{(n)}(t))(x,x')=0$ whenever $d(x,x')>R(t)$
for all $x,x'\in Z_{d,n},n\in\mathbb{N}, t\in\mathbb{R}_{+}$.
\end{definition}

\begin{definition}\label{def. 4.5}
Let $d\geqslant 0$. The maximal localization algebra at infinity, denoted by $C^{*}_{u,L,\max,\infty}((P_{d}(X_{n}))_{n\in\mathbb{N}})$, is defined to be the completion of $\mathbb{C}_{u, L, \infty}
[(P_{d}(X_{n}))_{n\in\mathbb{N}}]$ with respect to the norm 
\[\|f\|_{\max}:=\sup_{t\in\mathbb{R}_{+}}\|f(t)\|_{\max}.\]
\end{definition}
With a similar proof of the arguments in \cite{Yu97}, the local assembly map at infinity 
\[\mu_{L,\max,\infty}:\lim_{d\rightarrow\infty}\frac{\prod^{\infty}_{n=0}K_{*}(P_{d}(X_{n}))}{\bigoplus^{\infty}_{n=0}K_{*}(P_{d}(X_{n}))}\rightarrow \lim_{d\rightarrow\infty}K_{*}(C^{*}_{u,L,	\max,\infty}((P_{d}(X_{n}))_{n\in\mathbb{N}}))
\]is an isomorphism. 
We can also define a natural evaluation-at-zero homomorphism
\[e:C^{*}_{u, L, \max,\infty}((P_{d}(X_{n}))_{n\in\mathbb{N}})\rightarrow C^{*}_{u, \max,\infty}((P_{d}(X_{n}))_{n\in\mathbb{N}})\]
by $e(f)=f(0)$, and obtain the following commutative diagram

\begin{center}
	\begin{tikzcd}
			&  & {\lim_{d\rightarrow\infty}K_{*}(C^{*}_{u,L,\max,\infty}((P_{d}(X_{n}))_{n\in\mathbb{N}}))} \arrow[d,"e_{*}"] \\
		{\lim_{d\rightarrow\infty}\frac{\prod^{\infty}_{n=0}K_{*}(P_{d}(X_{n}))}{\bigoplus^{\infty}_{n=0}K_{*}(P_{d}(X_{n}))}} \arrow[rr,"\mu_{\max,\infty}"] \arrow[rru,"\mu_{L,\max,\infty}"] &  & {\lim_{d\rightarrow\infty}K_{*}(C^{*}_{u,\max,\infty}((P_{d}(X_{n}))_{n\in\mathbb{N}}))} .         
	\end{tikzcd}
		\end{center}
As a result, proving the assembly map $\mu_{\max,\infty}$ is isomorphism can be reduced to verify the map
\begin{equation}\label{equ. 1}
e_{*}:\lim_{d\rightarrow\infty}K_{*}(C^{*}_{u, L, \max,\infty}((P_{d}(X_{n}))_{n\in\mathbb{N}}))\rightarrow \lim_{d\rightarrow\infty}K_{*}(C^{*}_{u \max,\infty}((P_{d}(X_{n}))_{n\in\mathbb{N}}))
\end{equation}
induced by the evaluation-at-zero map on $K$-theory is an isomorphism, which is restated as follows.

\begin{theorem}\label{the. 4.6}
Let $(X_{n})_{n\in\mathbb{N}}$ be a sequence of metric spaces with uniform bounded geometry. If the coarse disjoint union $X=\bigsqcup_{n\in\mathbb{N}}X_{n}$ admits an A-by-FCE coarse fibration structure, then the evaluation homomorphism (\ref{equ. 1}) is an isomorphism.
\end{theorem}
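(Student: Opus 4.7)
The plan is to prove Theorem \ref{the. 4.6} by the standard Bott--Dirac twisting strategy adapted to the A-by-FCE coarse fibration setting. First, I would use the fibred coarse embedding of the base space $Y=\bigsqcup Y_n$ (where $Y_n$ are the finite base spaces of $X_n$) to introduce, in Section \ref{section 5}, a maximal twisted Roe algebra at infinity $C^{*}_{u,\max,\infty,\mathrm{Tw}}((P_d(X_n))_{n\in\mathbb{N}})$ and its localization counterpart $C^{*}_{u,L,\max,\infty,\mathrm{Tw}}((P_d(X_n))_{n\in\mathbb{N}})$. The coefficients should be taken in $\mathcal{S}\mathbin{\hat{\otimes}}\mathcal{A}(H)$, where $\mathcal{S}=C_0(\mathbb{R})$ graded by even/odd functions and $\mathcal{A}(H)$ is the Higson--Kasparov--Trout C$^{*}$-algebra of a Hilbert space; the section $s\colon Y\to\bigsqcup H_y$ coming from the fibred embedding, together with the local trivializations $t_C$, will provide the matrix coefficients needed to twist the operators supported near the diagonal, while the propagation in the base-space direction lets us use only local trivializations.

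Next, following the outline of Section \ref{section 5} through Section \ref{section 7}, I would construct a Bott asymptotic morphism $\beta\colon C^{*}_{u,\max,\infty}\to C^{*}_{u,\max,\infty,\mathrm{Tw}}$ and a Dirac asymptotic morphism $\alpha$ going the other way (after stabilizing by Clifford algebras), together with their localized versions $\beta_L, \alpha_L$ commuting up to homotopy with evaluation-at-zero. The Bott map is obtained by pointwise multiplication against the Bott element $\beta_{s(x)}\in\mathcal{A}(H)$ built from the embedding's section, and the Dirac map by a Kasparov-type unbounded Dirac operator on $H$; their compositions give the identity on $K$-theory via the finite-dimensional Bott periodicity argument of Higson--Kasparov--Trout, suitably fibred over the local trivializations. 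This leads to the diagram
\begin{equation*}
\begin{tikzcd}[column sep=small]
\lim\limits_d K_{*}(C^{*}_{u,L,\max,\infty}) \arrow[r,"e_{*}"] \arrow[d,"(\beta_L)_{*}"'] & \lim\limits_d K_{*}(C^{*}_{u,\max,\infty}) \arrow[d,"\beta_{*}"] \\
\lim\limits_d K_{*}(C^{*}_{u,L,\max,\infty,\mathrm{Tw}}) \arrow[r,"e_{*}"] & \lim\limits_d K_{*}(C^{*}_{u,\max,\infty,\mathrm{Tw}})
\end{tikzcd}
\end{equation*}
in which the vertical maps become isomorphisms after composing with $\alpha_{*},(\alpha_L)_{*}$.

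The crux, handled in Section \ref{section 6}, is to show that the bottom horizontal evaluation map on the twisted side is an isomorphism. For this I would decompose the twisted algebras into ideals indexed by coherent systems of open subsets of $Y$ via a Mayer--Vietoris cutting-and-pasting argument on the base: using condition (4) of Definition \ref{def. 2.4}, a uniformly bounded open cover of each $Y_n$ yields an $\omega$-excisive decomposition of $X_n$ into preimages of members of the cover, and the fibres are matched up to uniform coarse equivalence. On each piece the fibred embedding is genuinely trivializable, so the twisted algebra at infinity is (up to Morita equivalence and a stabilization by the fibres) a product of algebras of the form $\mathcal{S}\mathbin{\hat{\otimes}}\mathcal{A}(H)\mathbin{\hat{\otimes}}C^{*}_{\max}(\text{fibres})$, whose localization evaluation is an isomorphism by a standard Eilenberg swindle together with a strong Lipschitz homotopy argument à la Yu, crucially using equi-Property A of the fibre spaces $\{p^{-1}(y)\}_{y\in Y}$ so that the maximal and reduced norms coincide and the swindle is available in the maximal setting. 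Assembling these pieces by a five-lemma/Mayer--Vietoris induction on the coherent system gives the twisted evaluation isomorphism, which by the Bott--Dirac diagram above forces the untwisted evaluation (\ref{equ. 1}) to be an isomorphism as well.

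The main obstacle is controlling the passage between the local trivializations of the fibred embedding and the global twisted algebra: one must arrange the ``propagation in $Y$'' of twisted operators to be small enough that any pair $(x,x')$ with $p(x),p(x')$ in a common trivialization patch is handled by a single $t_C$, and the cocycle condition in part (2) of Definition \ref{def. 2.2} must be used to glue these local constructions into a well-defined element of the twisted algebra, independently of the choice of trivialization. Combined with the need to work in the maximal norm throughout (so that equi-Property A of the fibres becomes essential to run the swindle and to identify the ideals appearing in the cutting-and-pasting step), this is where the bulk of the technical work in Sections \ref{section 5}--\ref{section 7} will go.
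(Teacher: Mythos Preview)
Your proposal is essentially correct and follows the same overall strategy as the paper: twist by the fibred coarse embedding, construct Bott and Dirac asymptotic morphisms, prove the twisted evaluation is an isomorphism by cutting-and-pasting, and conclude by a diagram chase using $\alpha_{*}\circ\beta_{*}=\mathrm{id}$. Two technical points are handled slightly differently. First, because Proposition~\ref{prop. 4.22} has already reduced to the case where each base $Y_{n}$ is finite, the paper twists by the finite-dimensional algebras $\mathcal{A}(V_{n})$ (and later $\mathcal{S}\hat\otimes\mathcal{K}(\mathcal{L}^{2}(E_{n}))$) rather than the infinite-dimensional HKT algebra $\mathcal{A}(H)$; this lets the Bott periodicity step (Theorem~\ref{the. 7.8}) be genuinely finite-dimensional. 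Second, the cutting-and-pasting is carried out not on open subsets of $Y$ but on ``coherent systems'' of open subsets of $\mathbb{R}_{+}\times V_{n}$, namely unions of balls around the image points $t_{y}(\gamma)(s(\gamma))$; the separation hypothesis (``$(\varGamma,r)$-separate'') is then a Hilbert-space condition, and the resulting ideals are identified (Proposition~\ref{prop. 6.16}) with inductive limits of algebras built from the fibres $p_{n}^{-1}(\gamma)$, where equi-Property~A and Yu's theorem give the evaluation isomorphism---rather than an Eilenberg swindle per se.
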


We will devote the second half of this paper, specifically Sections \ref{section 5}, \ref{section 6}, and \ref{section 7}, to proving this result. With Theorem \ref{the. 4.6} established, Theorem \ref{the. 1.1} follows directly from Proposition \ref{prop. 4.22}.

\section{Maximal twisted Roe algebras at infinity}\label{section 5}

In this section, we shall introduce the maximal twisted Roe algebra at infinity $C^{*}_{u,\max,\infty}((P_{d}(X_{n}),\mathcal{A}(V_{n}))_{n\in\mathbb{N}})$ 
 and their localized counterpart $C^{*}_{u,L,\max,\infty}((P_{d}(X_{n}),\mathcal{A}(V_{n}))_{n\in\mathbb{N}})$ for a sequence of metric spaces $(X_{n})_{n\in\mathbb{N}}$ with uniform bounded geometry such that the coarse disjoint $X=\bigsqcup_{n\in\mathbb{N}}X_{n}$ admits an A-by-FCE coarse fibration structure. Subsequently, a Bott map $\beta$ will be constructed from the $K$-theory of the maximal Roe algebra at infinity to the $K$-theory of the maximal twisted Roe algebra at infinity, along with another Bott map $\beta_{L}$ between the $K$-theory of the corresponding localization algebras. These constructions lead to the following commutative diagram
 
\begin{center}
\begin{tikzcd}
\lim_{d\rightarrow \infty}{K_{*}(C^{*}_{u, L, \max,\infty}((P_{d}(X_{n}))_{n\in\mathbb{N}}))} \arrow[rr,"e_{*}"] \arrow[d,"(\beta_{L})_{*}"] &  & \lim_{d\rightarrow \infty}{K_{*}(C^{*}_{u, \max,\infty}((P_{d}(X_{n}))_{n\in\mathbb{N}}))} \arrow[d,"\beta_{*}"] \\
\lim_{d\rightarrow \infty}{K_{*}(C^{*}_{u, L, \max,\infty}((P_{d}(X_{n}),\mathcal{A}(V_{n}))_{n\in\mathbb{N}}))} \arrow[rr,"e_{*}^{\mathcal{A}}"]           &  & \lim_{d\rightarrow \infty}{K_{*}(C^{*}_{u, \max,\infty}((P_{d}(X_{n}),\mathcal{A}(V_{n}))_{n\in\mathbb{N}}))}  ,      
\end{tikzcd}
\end{center}
where $e_{*}^{\mathcal{A}}$ is the $*$-homomorphism induced by the evaluation-at-zero map from the maximal twisted localization algebra at infinity to the maximal twisted Roe algebra at infinity. We shall decompose these twisted algebras into various smaller ideals to establish an isomorphism of $e_{*}^{\mathcal{A}}$ in Section \ref{section 6}. Finally, in Section \ref{section 7}, we will develop the Dirac maps $\alpha$ and $\alpha_{L}$ and prove a geometric analogue of the Bott periodicity in finite dimensions. Consequently, the top evaluation map $e_{*}$ is an isomorphism, as stated in Theorem \ref{the. 4.6}. The concept behind these constructions can be traced back to \cite{CWY13}\cite{Yu00}.

\subsection{Preliminary}
\quad Let $H$ be a separable infinite-dimensional Hilbert space. Denote by $V_{a}, V_{b}$ the finite-dimensional affine subspaces of $H$. Let $V^{0}_{a}$ be the linear subspace of $H$ consisting of differences of elements of $V_{a}$. Let $\text{Cliff}(V^{0}_{a})$ be the complexified Clifford algebra of $V^{0}_{a}$ and $\mathcal{C}(V_{a})$ the graded $C^{*}$-algebra of continuous functions vanishing at infinity from $V_{a}$ into $\text{Cliff}(V^{0}_{a})$. Let $\mathcal{S}$ be the $C^{*}$-algebra of all continuous functions on $\mathbb{R}$ vanishing at infinity. Then $\mathcal{S}$ is graded according to the even and odd functions. Define the graded tensor product
\[\mathcal{A}(V_{a})=\mathcal{S}\hat\otimes\mathcal{C}(V_{a})
\]
If $V_{a}\subseteq V_{b}$, then we have a decomposition $V_{b}=V^{0}_{ba}\oplus V_{a}$, where $V^{0}_{ba}$ is the orthogonal complement of $V^{0}_{a}$ in $V^{0}_{b}$. For each $v_{b}\in V_{b}$, we have a corresponding decomposition $v_{b}=v_{ba}+v_{a}$,
where $v_{ba}\in V_{ba}^{0}$ and $v_{a}\in V_{a}$. Every function $h$ on $V_{a}$ can be extended to a function $\tilde{h}$ on $V_{b}$ by
the formula $\tilde{{h}}(v_{ba}+v_{a})=h(v_{a})$.
\begin{definition}\label{def. 5.1}
For affine subspaces $V_{a}\subseteq V_{b}$, denote by $C_{ba}:V_{b}\rightarrow\text{Cliff}
(V^{0}_{b})$ the function
$v_{b}\mapsto v_{ba}\in\text{Cliff}(V^{0}_{b})$, where $v_{ba}$ is considered as an element of 
$\text{Cliff}(V^{0}_{b})$ via the inclusion
$V^{0}_{ba}\subset \text{Cliff}(V^{0}_{b})$. Let $X$ be the unbounded multiplier of $\mathcal{S}$ given by the function $x\mapsto x$. Define a
$*$-homomorphism 
\[\beta_{V_{b},V_{a}}:\mathcal{A}(V_{a})\rightarrow \mathcal{A}(V_{b}),\] or simply denoted by $\beta_{ba}$, by the formula
\[\beta_{ba}(g\hat\otimes h)=g(X\hat\otimes 1+1\hat\otimes C_{ba})(1\hat\otimes \tilde{h})\]
for all $g \in\mathcal{S}$, $h\in\mathcal{C}(V_{a})$, where $g(X\hat\otimes 1 +1 \hat\otimes C_{ba})$ is defined by functional calculus of $g$ on the unbounded, essentially self-adjoint operator $X\hat\otimes 1+1\hat\otimes C_{ba}$.
\end{definition} 

\begin{remark}\label{rem. 5.2} Some explanations for the above formula are given here. Since the $*$-homomorphism $\beta_{ba}$ is a tensor product with the identity on $\mathcal{C}(V_{a})$, we shall ignore the factor $\mathcal{C}(V_{a})$ in what follows and thus it suffices to
consider the case where $g$ is one of the generators 
$\exp(-x^{2})$ or $x\exp(-x^{2})$ of the $C^{*}$-algebra $\mathcal{S}$. On the first generator,
\[g(X\hat\otimes 1+1\hat\otimes C_{ba})=\exp(-x^{2})\hat\otimes \exp(-C_{ba}^{2}).\]
It is a continuous function with domain $\mathbb{R}_{+}\times V_{ba}^{0}$, for any pair $(t,v)\in\mathbb{R}_{+}\times V_{ba}^{0}$, we calculate that 
\[(\exp(-x^{2})\hat\otimes \exp(-C_{ba}^{2}))(t,v)=\exp(-t^{2})\hat\otimes \exp(-\|v_{ba}\|^{2})=\exp\Big(-\Big(\sqrt{t^{2}+\|v_{ba}\|^{2}}\Big)^{2}\Big),\]
which happens to be the value of function $g$ at point $t^{2}+\|v_{ba}\|^{2}$. One can verify that this result also applies to the generator $x\exp(-x^{2})$. Consequently, the function $g$ and subsequently, the $*$-homomorphism $\beta_{ba}$ can be interpreted as a rotation around the $V_{a}$-space, as illustrated in Figure \ref{fig.1}. Through this rotation, functions defined on the lower dimensional affine subspace $\mathbb{R_{+}}\times V_{a}$ are elevated to the higher dimensional affine subspace $\mathbb{R_{+}}\times V_{b}$. Continuing this iterative process, we eventually ascend to the infinite-dimensional space $\mathbb{R}_{+}\times H$.

From this perspective, the metric topology of space $\mathbb{R}_{+}\times H$ is no longer applicable. We instead endow with it a topology under which $\mathbb{R}_{+}\times H$ is a locally
compact topological space in such a way that a net $\{(t_{i},v_{i})\}$ in $\mathbb{R}_{+}\times H$ converges to
a point $(t,v)\in\mathbb{R}_{+}\times H$ if and only if
\begin{enumerate}
    \item [(1)] \hspace{0pt}$t_{i}^{2}+\|v_{i}\|^{2}\rightarrow\ t^{2}+\|v\|^{2}$, as $i\rightarrow\infty$;
    \item [(2)] \hspace{0pt}$\langle v_{i},u\rangle\rightarrow\langle v,u\rangle$ for any $u\in H$, as $i\rightarrow\infty$.
\end{enumerate}
Note that for each $v\in H$ and each $r>0$,
$B(v,r)=\{(t,w)\in\mathbb{R}_{+}\times H\mid t^{2}+\|v-w\|^{2}<r^{2}\}$ is an open subset of $\mathbb{R}_{+}\times H$. For finite-dimensional subspaces $V_{a}\subseteq V_{b}\subseteq H$, since $\beta_{ba}$ takes $C_{0}(\mathbb{R}_{+}\times V_{a})$ into $C_{0}(\mathbb{R}_{+}\times V_{b})$, the $C^{*}$-algebra $\lim\limits_{\rightarrow}C_{0}(\mathbb{R}_{+}\times V_{a})$ is $*$-isomorphic to $C_{0}(\mathbb{R}_{+}\times H)$, where the direct limit is over the directed set of all finite-dimensional affine subspaces $V_{a}\subseteq V$.
\end{remark}

\begin{figure}[H]
    \centering
    \includegraphics[width=6.7in]{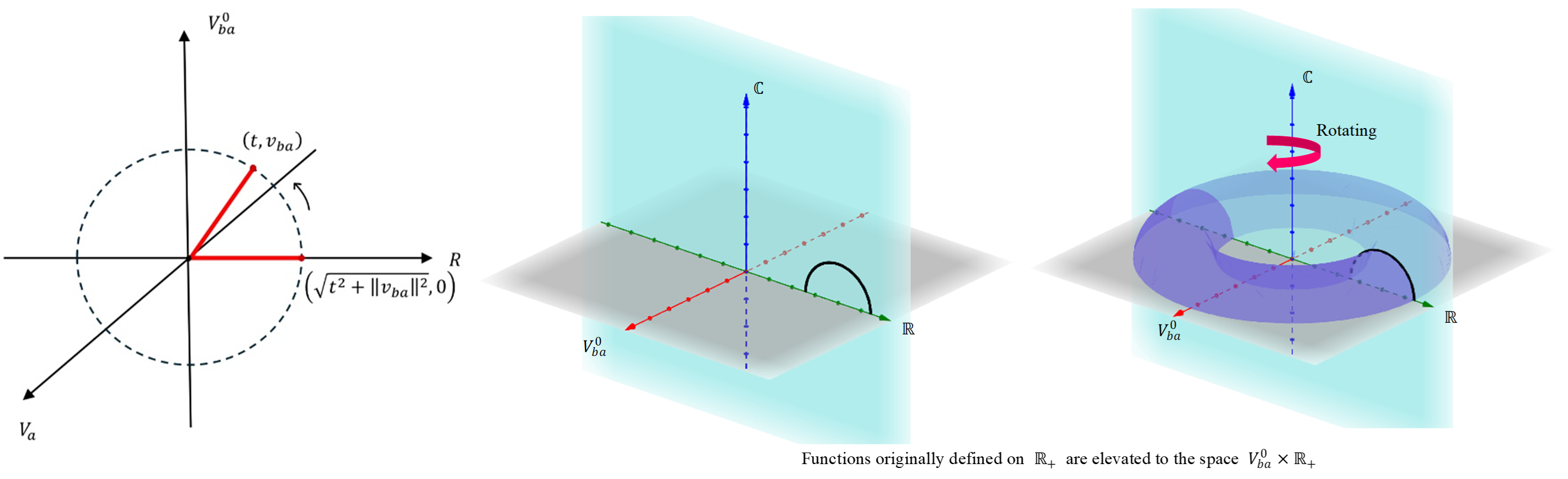}
    \caption{Diagram of the rotation process.}
    \label{fig.1}
\end{figure}

\begin{definition}\label{def. 5.3}
    If $V_{a}\subseteq V_{b}$, for any subset 
    $O\subset \mathbb{R}_{+}\times V_{a}$, define
    \[\overline{O}^{\beta_{ba}}=\{(t,v_{ba}+v_{a})\in\mathbb{R}_{+}\times V_{b}\mid (\sqrt{t^{2}+\|v_{ba}\|^{2}},v_{a})\in O\}.\]
\end{definition}
For any finite dimensional affine subspace $V_{a}$ of $H$, the algebra $C_{0}(\mathbb{R}_{+}\times V_{a})$ is included in
$\mathcal{A}(V_{a})$ as its center. For any function $a\in\mathcal{A}(V_{a})$, the support of $a$, denoted by $\text{supp}(a)$, is the
complement of all points $(t,v)\in \mathbb{R}_{+}\times V_{a}$ such that there exists 
$g\in C_{0}(\mathbb{R}_{+}\times V_{a})$ such that
$g(t,v)\neq0 $ but $g\cdot a=0$. Note that if $V_{a}\subseteq V_{b}$ and $a\in\mathcal{A}(V_{a})$, then \[\text{Supp}(\beta_{ba}(a))=\overline{\text{Supp}(a)}^{\beta_{ba}}.\]

The following concept pertains to the compatibility between $*$-homomorphisms defined above and isometries. This compatibility will be instrumental in establishing the product structure of the maximal twisted Roe algebra at infinity in subsequent discussions.

\begin{definition}\label{def. 5.4}
Let $W_{a},W_{b},V_{a},V_{b},V_{c}$ be finite dimensional affine subspaces of $H$ with $W_{a}\subseteq
W_{b}\subseteq V_{c}$ and $V_{a}\subseteq V_{b}\subseteq V_{c}$. Let $t$ be an affine isometry from $W_{b}$ onto $V_{b}$ mapping $W_{a}$ onto 
$V_{a}$. There exists a canonical $*$-isomorphism $t_{*}:\mathcal{C}(W_{a})\rightarrow \mathcal{C}(V_{a})$ defined by $(t_{*}(h))(w)=h(t^{-1}(w))$ for any $h\in\mathcal{C}(W_{a})$ and $w\in V_{a}$. Subsequently, we obtain a $*$-isomorphism $1\hat\otimes t_{*}$ from $\mathcal{A}(W_{a})$ onto $\mathcal{A}(V_{a})$. Note that $\mathcal{A}(V_{a})$ is included in $\mathcal{A}(V_{c})$ via $\beta_{ca}$, without causing confusion, we still denote by $t_{*}$ the composition of $*$-isomorphism $1\hat\otimes t_{*}$ and $*$-homomorphism $\beta_{ca}$:
\begin{center}
\begin{tikzcd}
{t_{*}:\mathcal{A}(W_{a})} \arrow[r,"\cong"] & {\mathcal{A}(V_{a})} \arrow[r,"\beta_{ca}"] & {\mathcal{A}(V_{c})}.
\end{tikzcd}
\end{center}
Consequently, we have the following commutative diagram.
\begin{center}
\begin{tikzcd}
{\mathcal{A}(W_{a})} \arrow[r,"\cong"] \arrow[d,"\beta_{ba}"] & {\mathcal{A}(V_{a})} \arrow[r,"\beta_{ca}"] \arrow[d,"\beta_{ba}"] & {\mathcal{A}(V_{c})} \arrow[d,"="] \\
{\mathcal{A}(W_{b})} \arrow[r,"\cong"]           & {\mathcal{A}(V_{b})} \arrow[r,"\beta_{cb}"]           & {\mathcal{A}(V_{c})}      
\end{tikzcd}
\end{center}
When $t$ is not bijective, the composition is adjusted as
\begin{center}
\begin{tikzcd}
{t_{*}:\mathcal{A}(W_{a})} \arrow[r,"t_{*}"] & {\mathcal{A}(t(W_{a}))}\arrow[r,"\beta_{V_{a},t(W_{a})}"] &\mathcal{A}(V_{a})\arrow[r,"\beta_{ca}"] & {\mathcal{A}(V_{c})},
\end{tikzcd}
\end{center}
which is the form utilized later in discussing the product structure for maximal twisted Roe algebras at infinity.
\end{definition}

\subsection{Maximal twisted Roe algebras at infinity}
Let $(X_{n})_{n\in\mathbb{N}}$ be a sequence of metric spaces with uniform bounded geometry that admits an A-by-FCE coarse fibration structure. Then there exist a family of discrete metric spaces $(Y_{n})_{n\in\mathbb{N}}$ with uniform bounded geometry, and for each $n\in\mathbb{N}$ there exists a surjective map $p_{n}:X_{n}\rightarrow Y_{n}$ satisfying conditions (1)-(4) in Definition \ref{def. 2.4}. 
In this subsection, we shall define the maximal twisted Roe algebras at infinity and their associated localization algebras for the sequence $(X_{n})_{n\in\mathbb{N}}$ by using the fibred coarse embeddability of the sequence $(Y_{n})_{n\in\mathbb{N}}$. \par Recall that the sequence of finite metric spaces $(Y_{n})_{n\in\mathbb{N}}$ with uniform bounded geometry is said to admit a fibred coarse embedding into Hilbert space $H$ if there exist\par 
$\bullet$ a field of Hilbert spaces $(H_{y})_{y\in Y_{n},n\in\mathbb{N}}$;\par
$\bullet$ a section $s:Y_{n}\rightarrow \bigsqcup_{y\in Y_{n}}H_{y}$ for all $n\in\mathbb{N}$;\par 
$\bullet$ two non-decreasing functions $\rho_{1},\rho_{2}$ from $[0,\infty)$ to  $[0,\infty)$ with $\lim\limits_{r\rightarrow\infty}\rho_{i}(r)=\infty, i=1,2$;\par   
$\bullet$ a non-decreasing sequence of numbers $0\leqslant l_{0}\leqslant l_{1}\leqslant ...\leqslant l_{n}\leqslant ...$ with $\lim\limits_{n\rightarrow\infty}l_{n}=\infty$,\\
such that for each $y\in Y_{n},n\in\mathbb{N}$, there exists a trivialization
\[t_{y}:(H_{z})_{z\in B_{Y_{n}}(y,l_{n})}\rightarrow 
B_{Y_{n}}(y,l_{n})\times H\]
such that the restriction of $t_{y}$ to the fiber 
$H_{z},z\in B_{Y_{n}}(y,l_{n}),$ is an affine isometry $t_{y}(z):H_{z}\rightarrow H$ satisfying
\begin{enumerate}
   \item [(1)] \hspace{0pt} $\rho_{1}(d(z,z'))\leqslant\|t_{y}(z)(s(z))-t_{y}(z')(s(z'))\|\leqslant \rho_{2}(d(z,z'))$ for any $z,z'\in B_{Y_{n}}(y,l_{n}),y\in Y_{n},n\in\mathbb{N};$
    \item [(2)] \hspace{0pt} for any $y,y'\in Y_{n}$ with $B_{Y_{n}}(y,l_{n})\cap B_{Y_{n}}(y',l_{n})\neq\emptyset$, there exists an affine isometry $t_{yy'}: H\rightarrow H$ such that $t_{y}(z)\circ t^{-1}_{y'}(z)=t_{yy'}$ for all $z\in B_{Y_{n}}(y,l_{n})\cap B_{Y_{n}}(y',l_{n})$.
\end{enumerate}

 For any $d\geqslant 0$ and $n\in\mathbb{N}$, let $P_{d}(X_{n})$ and $P_{d}(Y_{n})$ be the Rips complexes of $X_{n}$ and $Y_{n}$ at scale $d$ endowed with the spherical metric, respectively. For each $y\in P_{d}(Y_{n})$, denote by Star$(y)$ the open star of $y$ in the barycentric subdivision of $P_{d}(Y_{n})$. The surjective map $p_{n}:X_{n}\rightarrow Y_{n}$ induces $p_{n}:P_{d}(X_{n})\rightarrow P_{d}(Y_{n})$ by the formula
\[p_{n}(\sum^{k}_{i=0}c_{i}x_{i})=\sum^{k}_{i=0}c_{i}p_{n}(x_{i}),\]
where $c_{i}\geqslant 0$ and $\sum^{k}_{i=0}c_{i}=1$. 
Take a countable dense subset 
$F_{d,n}\subset P_{d}(Y_{n})$ for each $d\geqslant{0}$ and $n\in\mathbb{N}$ in such a
way that
\[(1)\ F_{d,n}\subset\bigsqcup_{y\in Y_{n}} \text{Star}(y);\quad\quad (2)\ F_{d,n}\subset F_{d',n}\ \text{when}\ d<d'.\]
Then we can choose the corresponding countable dense subset $Z_{d,n}$ of $P_{d}(X_{n})$ such that $p_{n}(Z_{d,n})=F_{d,n}$ so that $Z_{d,n}\subset Z_{d',n}$ whenever $d<d'$ for all $n\in\mathbb{N}$. 
Moreover, 
for any $y\in F_{d,n}$, there exists a unique point $\bar{y}\in Y_{n}$ such that $y\in$ Star$(\bar{y})$. We define 
\[H_{y}=H_{\bar{y}}\quad \text{and}\quad s(y)=s(\bar{y}),\]
for all $y\in F_{d,n}\cap \text{Star}(\bar{y})$.
The trivialization of the single point $y$ will  represent that of open star Star$(y)$ of $y$, i.e.
$t_{y}(z)=t_{\bar{y}}(\bar{z})$ for all $y,z\in F_{d,n},n\in\mathbb{N}$ with $\bar{z}\in B(\bar{y},l_{n})$. 

For any $n\in\mathbb{N}$, define $V_{n}$ to be the finite dimensional affine subspace of $H$ spanned by
$t_{y}(z)(s(z))$ for all $z\in B_{Y_{n}}(y,l_{n})$ with $y\in Y_{n}$, i.e.
\[V_{n}:=\text{affine-span}\{t_{y}(z)(s(z))\mid z\in B_{Y_{n}}(y,l_{n}), y\in Y_{n} \}.\]
For any $y\in Y_{n}$ and $k\geqslant 0$, define
\[W_{k}(y):=\text{affine-span}\{t_{y}(z)(s(z))\mid
z\in F_{d,n}\cap
B_{P_{d}(Y_{n})}(y,k)\}.\]
Note that for each $k\geqslant 0$, there exists $N\in\mathbb{N}$ such that $W_{k}(y)$ is well defined for $y\in F_{d,n}$ with $n\geqslant N$, and is an affine subspaces of $V_{n}$. By Definition \ref{def. 5.1}, the inclusion $W_{k}(y)\subseteq V_{n}$ induces the following $*$-homomorphism
\[\beta_{V_{n},W_{k}(y)}:\mathcal{A}(W_{k}(y))\rightarrow \mathcal{A}({V}_{n}).\]

\begin{definition}\label{def. 5.5}
 For any $d\geqslant 0$, the algebraic twisted Roe algebra at infinity
$\mathbb{C}_{u,\infty}
[(P_{d}(X_{n}),\mathcal{A}(V_{n}))_{n\in\mathbb{N}}]$ is defined to be the
set of all equivalence classes $T=[(T^{(0)},\dots,T^{(n)},\dots)]$ of sequences $(T^{(0)},\dots,T^{(n)},\dots)$ satisfying
\begin{enumerate}
   \item [(1)] \hspace{0pt}for each $n\in\mathbb{N}$, 
$T^{(n)}$ is a bounded function from $Z_{d,n}\times Z_{d,n}$ to $\mathcal{A}(V_{n})\hat{\otimes}\mathcal{K}$ such that 
\[\sup_{n\in\mathbb{N}} \sup_{x,x'\in Z_{d,n}}\|T^{(n)}(x,x')\|_{\mathcal{A}(V_{n})\hat{\otimes}\mathcal{K}}<\infty; \]
\item [(2)] \hspace{0pt}for each $n\in\mathbb{N}$ and any bounded subset $B\subset P_{d}(X_{n})$, the set
\[\{(x,x')\in (B\times B)\cap (Z_{d,n}\times Z_{d,n})\mid T^{(n)}(x,x')\neq 0 \}\]
is finite;
\item [(3)] \hspace{0pt}there exists $L>0$ such that 
\[\#\{x'\in Z_{d,n}\mid T^{(n)}(x,x')\neq 0\}<L \quad\text{and}\quad 
\#\{x'\in Z_{d,n}\mid T^{(n)}(x',x)\neq 0\}<L\]
for all $x\in Z_{d,n},n\in\mathbb{N};$
\item [(4)] \hspace{0pt}there exists $R>0$ such that $T^{(n)}(x,x')=0$ whenever $d(x,x')>R$ for any $x,x'\in Z_{d,n},n\in\mathbb{N}$ (the least such $R$ is called the propagation of the sequence $(T^{(0)},\dots,T^{(n)},\dots)$);
\item [(5)] \hspace{0pt}there exists $r>0$ such that 
\begin{align*}
	\text{Supp}(T^{(n)}(x,x'))
	&\subseteq B_{{\mathbb{R}}_{+}\times V_{n}}(t_{p_{n}(x)}(p_{n}(x))(s(p_{n}(x))),r)\\
	&=\{(\tau,v)\in{{\mathbb{R}}_{+}\times V_{n}}\mid \tau^{2}+\|v-t_{p_{n}(x)}(p_{n}(x))(s(p_{n}(x)))\|^{2}<r^{2}\}
\end{align*}
for all $x,x'\in Z_{d,n}, n\in\mathbb{N}$;
\item [(6)] \hspace{0pt}there exist $k>0$ and $i_{0}>0$ depending only on the sequence $(T^{(0)},\dots,T^{(n)},\dots)$(not on $n$) 
such that for each $x,x'\in Z_{d,n}$, there exists
\[{T_{1}}^{(n)}(x,x')\in\mathcal{A}(W_{k}(p_{n}(x)))\hat{\otimes}\mathcal{K}\cong\mathcal{S}\hat{\otimes}\mathcal{C}(W_{k}(p_{n}(x)))\hat{\otimes}\mathcal{K}
\]
of the form $\sum_{i=1}^{i_{0}} g_{i}\hat{\otimes}h_{i}\hat{\otimes}{k_{i}}$, where $g_{i}\in\mathcal{S}$, $h_{i}\in\mathcal{C}(W_{k}(p_{n}(x)))$, $k_{i}\in\mathcal{K}$, for $1\leqslant i\leqslant i_{0}$ such that 
\[T^{(n)}(x,x')=(\beta_{V_{n},W_{k}(p_{n}(x))}\hat{\otimes}1)({T_{1}}^{(n)}(x,x'));\]
\item [(7)] \hspace{0pt}there exists $c>0$ such that if ${T_{1}}^{(n)}(x,x')\in\mathcal{A}(W_{k}(p_{n}(x)))\hat{\otimes}\mathcal{K}$
as above and $w\in{\mathbb{R}}_{+}\times {W_{k}}^{0}(p_{n}(x))$
with $\|w\|\leqslant 1$, then $\bigtriangledown_{w}({T_{1}}^{(n)}(x,x'))$, the derivative of ${T_{1}}^{(n)}(x,x')$ in the direction $w$, exists in 
$\mathcal{A}(W_{k}(p_{n}(x)))\hat{\otimes}\mathcal{K}$ and is of norm at most $c$ for all $x,x'\in Z _{d,n},n\in\mathbb{N}$.
\end{enumerate}
\end{definition}

The equivalence relation $\thicksim$ on these sequences is defined by 
\[(T^{(0)},\dots,T^{(n)},\dots)\thicksim(S^{(0)},\dots,S^{(n)},\dots)\] if and only if
\[\lim_{n\rightarrow \infty}\sup_{x,x'\in Z_{d,n}}{\|T^{(n)}(x,x')-S^{(n)}(x,x')\|}_{\mathcal{A}(V_{n})\hat{\otimes}\mathcal{K}}=0.\]

The product of two arbitrary elements ${[(T^{(0)},\dots,T^{(n)},\dots)]}$ and $[(S^{(0)},\dots,S^{(n)},\dots)]$ in the algebraic twisted Roe algebra at infinity
$\mathbb{C}_{u,\infty}
[(P_{d}(X_{n}),\mathcal{A}(V_{n}))_{n\in\mathbb{N}}]$ is defined to be
\[[(T^{(0)},\dots,T^{(n)},\dots)][(S^{(0)},\dots,S^{(n)},\dots)]=[((TS)^{(0)},\dots,
(TS)^{(n)},\dots)],\]
where there exists a sufficiently large $N\in\mathbb{N}$ depending on the propagation of the two sequences satisfying that $(TS)^{(n)}=0$ for all $n<N$, while
\[(TS)^{(n)}(x,x')=\sum_{z\in Z_{d,n}}
(T^{(n)}(x,z))\cdot ((t_{p_{n}(x)p_{n}(z)})_{*}(S^{(n)}(z,x')))\]
for all $x,x'\in Z_{d,n}, n\geqslant N$.

Note that for any $z,x'\in Z_{d,n}, n\in\mathbb{N}$ there exists $k\geqslant 0$ such that
\[S^{(n)}(z,x')=(\beta_{V_{n},W_{k}(p_{n}(x))}\hat{\otimes}1)({S_{1}}^{(n)}(z,x')),\] 
where ${S_{1}}^{(n)}(z,x')\in\mathcal{A}(W_{k}(p_{n}(x)))\hat{\otimes}\mathcal{K}
$. Let $R$ be the propagation of the sequence $(T^{(0)},\dots,T^{(n)},\dots)$. Since the map $p_{n}$ is uniformly expansive for each $n\in\mathbb{N}$, there exists $\tilde{R}>0$ such that \[T^{(n)}(x,x')=0\quad\text {whenever}\quad d_{P_{d}(Y_{n})}(p_{n}(x),p_{n}(x'))>\tilde{R}\] for any $x,x'\in Z_{d,n},n\in\mathbb{N}$.
Take $N\in\mathbb{N}$ large enough such that for any $n\geqslant N$, there exists $\tilde{l_{n}}>0$ such that $k+\tilde{R}<\tilde{l_{n}}$. Then we have the trivialization 
\[t_{p_{n}(x)}:\{H_{z}\}_{z\in B_{P_{d}(Y_{n})}(p_{n}(x),\tilde{l_{n}})}\rightarrow B_{P_{d}(Y_{n})}(p_{n}(x),\tilde{l_{n}})\times H.\]
If $d_{P_{d}(Y_{n})}(p_{n}(x),p_{n}(z))\leqslant \tilde{R}$, then
the affine isometry \[t_{p_{n}(x)p_{n}(z)}=t_{p_{n}(x)}(\omega)\circ {t}^{-1}_{p_{n}(z)}(\omega):H\rightarrow H\]
maps $W_{k}(p_{n}(z))$
onto an affine subspace of $W_{k+\tilde{R}}(p_{n}(x))$ in $V_{n}$ for all $\omega\in B_{P_{d}(Y_{n})}(p_{n}(x),\tilde{l_{n}})\cap B_{P_{d}(Y_{n})}(p_{n}(z),\tilde{l_{n}})$ since $B_{P_{d}(Y_{n})}(p_{n}(z),k)\subseteq B_{P_{d}(Y_{n})}(p_{n}(x),k+\tilde{R})$.
By Definition \ref{def. 5.4}, the isometry
\[t_{p_{n}(x)p_{n}(z)}:W_{k}(p_{n}(z))\rightarrow W_{k+\tilde{R}}(p_{n}(x))\hookrightarrow V_{n}
\]
induces a $\ast$-homomorphism
\[(t_{p_{n}(x)p_{n}(z)})_{*}:\mathcal{A}(W_{k}(p_{n}(z)))\xrightarrow{\cong}\mathcal{A}(W_{k+\tilde{R}}(p_{n}(x)))\xrightarrow{\beta_{V_{n},W_{k+\tilde{R}}(p_{n}(x))}}\mathcal{A}(V_{n}).\]
We define 
\[((t_{p_{n}(x)p_{n}(z)})_{*})(S^{(n)}(z,x')):=((t_{p_{n}(x)p_{n}(z)})_{*})(S_{1}^{(n)}(z,x')).
\]
Observe that for $n\in\mathbb{N}$ large enough, this definition does not depend on the choice of $k$ (Cf. \cite{HG04, HKT98}).

The $\ast$-structure for $\mathbb{C}_{u,\infty}
[(P_{d}(X_{n}),\mathcal{A}(V_{n}))_{n\in\mathbb{N}}]$ is defined by 
\[{[(T^{(0)},\dots,T^{(n)},\dots)]}^{*}=[({(T^{*})}^{(0)},\dots,{(T^{*})}^{(n)},\dots)],\]
where 
\[{(T^{*})}^{(n)}(x,x')=(t_{p_{n}(x)p_{n}(x')})_{*}
((T^{(n)}(x',z))^{*})\]
for all but finitely many $n$, and $0$ otherwise.

So far, the 
algebraic twisted Roe algebra at infinity $\mathbb{C}_{u,\infty}
[(P_{d}(X_{n}),\mathcal{A}(V_{n}))_{n\in\mathbb{N}}]$ is made into a $\ast$-algebra. 

\begin{definition}\label{def. 5.6}
 For any $d\geqslant 0$, the maximal twisted Roe algebra at infinity
$C^{*}_{u,\max,\infty}((P_{d}(X_{n}),\mathcal{A}(V_{n}))_{n\in\mathbb{N}})$ 
is defined to be the completion of $\mathbb{C}_{u,\infty}
[(P_{d}(X_{n}),\mathcal{A}(V_{n}))_{n\in\mathbb{N}}]$ with respect to the maximal norm 
\[\|T\|_{\max}:=\sup\{\|\phi(T)\|_{\mathcal{B}(H_{\phi})}	\mid\phi:	\mathbb{C}_{u,\infty}[(P_{d}(X_{n}),\mathcal{A}(V_{n}))_{n\in\mathbb{N}}]\rightarrow\mathcal{B}(H_{\phi}), \text{a}\ast\text{-representation}\}.\] 
\end{definition}
\noindent We can also define a reduced norm of the algebraic twisted Roe algebra at infinity as in \cite[Remark 6.4]{GLWZ24}. The reduced twisted Roe algebra at infinity is defined to be the reduced completion of $*$-algebra $\mathbb{C}_{u,\infty}
[(P_{d}(X_{n}),\mathcal{A}(V_{n}))_{n\in\mathbb{N}}]$, and denoted by $C^{*}_{u,\infty}((P_{d}(X_{n}),\mathcal{A}(V_{n}))_{n\in\mathbb{N}})$.

\begin{definition}\label{def. 5.7}
 Let $d\geqslant 0$. The algebraic twisted localization Roe algebra at infinity
\[\mathbb{C}_{u, L, \infty}
[(P_{d}(X_{n}),\mathcal{A}(V_{n}))_{n\in\mathbb{N}}]\]is defined to be the
$\ast$-algebra of all bounded and uniformly norm-continuous functions
\[f:\mathbb{R}_{+}\rightarrow\mathbb{C}_{u, \infty}
[(P_{d}(X_{n}),\mathcal{A}(V_{n}))_{n\in\mathbb{N}}]\]
such that $f(t)$ is of the form $f(t)=[({f}^{(0)}(t),\dots,{f}^{(n)}(t),\dots)]$ for all $t\in\mathbb{R}_{+}$, where the family of functions $({f}^{(n)}(t))_{n\in\mathbb{N}, t\geqslant 0}$ satisfies the conditions in Definition \ref{def. 5.5} with uniform constants,
and there exists a bounded function $R(t):\mathbb{R}_{+}\rightarrow\mathbb{R}_{+}$ with $\lim_{t\rightarrow\infty} R(t)=0$ such that
\[({f}^{(n)}(t))(x,x')=0\quad \text{whenever}\quad d(x,x')>R(t)\]
for all $x,x'\in Z_{d,n},n\in\mathbb{N}, t\in\mathbb{R}_{+}$.
\end{definition}
\begin{definition}\label{def. 5.8}
Let $d\geqslant 0$. The maximal twisted localization Roe algebra at infinity
\[C^{*}_{u, L, \max,\infty}((P_{d}(X_{n}),\mathcal{A}(V_{n}))_{n\in\mathbb{N}})\]
is defined to be the completion of $\mathbb{C}_{u, L, \infty}
[(P_{d}(X_{n}),\mathcal{A}(V_{n}))_{n\in\mathbb{N}}]$ with respect to the norm 
\[\|f\|_{\max}:=\sup_{t\in\mathbb{R}_{+}}\|f(t)\|_{\max}.\]
\end{definition}

We can define a natural evaluation-at-zero map
\[e^{\mathcal{A}}:C^{*}_{u, L, \max,\infty}((P_{d}(X_{n}),\mathcal{A}(V_{n}))_{n\in\mathbb{N}})\rightarrow
C^{*}_{u, \max,\infty}((P_{d}(X_{n}),\mathcal{A}(V_{n}))_{n\in\mathbb{N}})
\]
by $e(f)=f(0)$, which induces the following $*$-homomorphism on $K$-theory
\[e_{*}^{\mathcal{A}}:K_{*}(C^{*}_{u, L, \max,\infty}((P_{d}(X_{n}),\mathcal{A}(V_{n}))_{n\in\mathbb{N}}))\rightarrow
K_{*}(C^{*}_{u, \max,\infty}((P_{d}(X_{n}),\mathcal{A}(V_{n}))_{n\in\mathbb{N}})).
\]

\subsection{Constructions of the Bott maps $\beta$ and $\beta_{L}$}
In this subsection, we shall define an asymptotic morphism $\beta$, called the Bott map, from $C^{*}_{u, \max,\infty}((P_{d}(X_{n}))_{n\in\mathbb{N}})$ to
$C^{*}_{u, \max,\infty}((P_{d}(X_{n}),\mathcal{A}(V_{n}))_{n\in\mathbb{N}})$, and the localized counterpart $\beta_{L}$ to build the following commutative diagram:

\begin{center}
\begin{tikzcd}
\lim\limits_{d\to\infty}{K_{*+1}(C^{*}_{u, L, \max,\infty}((P_{d}(X_{n}))_{n\in\mathbb{N}}))} \arrow[rr,"e_{*}"] \arrow[d,"(\beta_{L})_{*}"] &  & \lim\limits_{d\to\infty}{K_{*+1}(C^{*}_{u, \max,\infty}((P_{d}(X_{n}))_{n\in\mathbb{N}}))} \arrow[d,"\beta_{*}"] \\
\lim\limits_{d\to\infty}{K_{*}(C^{*}_{u, L, \max,\infty}((P_{d}(X_{n}),\mathcal{A}(V_{n}))_{n\in\mathbb{N}}))} \arrow[rr,"e_{*}^{\mathcal{A}}"]           &  & \lim\limits_{d\to\infty}{K_{*}(C^{*}_{u, \max,\infty}((P_{d}(X_{n}),\mathcal{A}(V_{n}))_{n\in\mathbb{N}}))}  .        
\end{tikzcd}
\end{center}

\quad For each $y\in F_{d,n}$ and $n\in\mathbb{N}$, we define a Clifford operator
\[C_{V_{n},t_{y}(y)(s(y))}:V_{n}\rightarrow \text{Cliff}(V^{0}_{n})\]
 by the function
\[v\mapsto v-t_{y}(y)(s(y))\in\text{Cliff}(V^{0}_{n}),\]
where $v-t_{y}(y)(s(y))$ is considered as an element of 
$\text{Cliff}(V^{0}_{n})$ via the inclusion $V^{0}_{n}\subseteq\text{Cliff}(V^{0}_{n})$. It is a degree one, essentially self-adjoint, unbounded multiplier of $\mathcal{C}(V_{n})$, with domain the compactly supported functions in $\mathcal{C}(V_{n})$. 
Let $X$ be the function given by $x\mapsto x$ on $\mathbb{R}$. 
 By Lemma 3.3 in \cite{HKT98}, the operator $X\hat\otimes 1+1\hat \otimes C_{V_{n},t_{y}(y)(s(y))}$ actually a degree one, essentially self-adjoint, unbounded multiplier of $\mathcal{A}(V_{n})=\mathcal{S}\hat\otimes\mathcal{C}(V_{n})$. Applying the functional calculus, the inclusion of the 0-dimensional affine subspace $\{
t_{y}(y)(s(y))\}$ into $V_{n}$ induces a $\ast$-homomorphism
\[\beta(y):\mathcal{S}\cong\mathcal{A}(\{
t_{y}(y)(s(y))\})\rightarrow
\mathcal{A}(V_{n})\]
by the formula
\[(\beta(y))(g)=g(X\hat\otimes 1+1\hat \otimes C_{V_{n},t_{y}(y)(s(y))}).\]\par

\begin{definition}\label{def. 5.9}
 Let $d\geqslant 0$. For each $t\in [1,\infty)$, the Bott map is defined as the morphism
\[\beta_{t}:\mathcal{S}\hat\otimes\mathbb{C}_{u,\infty}[(P_{d}(X_{n}))_{n\in\mathbb{N}}]\rightarrow \mathbb{C}_{u, \infty}[(P_{d}(X_{n}),\mathcal{A}(V_{n}))_{n\in\mathbb{N}}]\]
by
\[\beta_{t}(g\hat\otimes T)=[((\beta_{t}(g\hat\otimes T))^{(0)},\dots,(\beta_{t}(g\hat\otimes T))^{(n)},\dots)]\]
for all $g\in\mathcal{S}$ and $T=[(T^{(0)},\dots,T^{(n)},\dots)]\in\mathbb{C}_{u,\infty}[(P_{d}(X_{n}))_{n\in\mathbb{N}}],$ where
	\[(\beta_{t}(g\hat\otimes T))^{(n)}(x,x')=(\beta(p_{n}(x)))(g_{t})\hat\otimes T^{(n)}(x,x')\]
for $x,x'\in Z_{d,n}$, $n\in\mathbb{N}$, and $g_{t}(r)=g(t^{-1}r)$ for all $r\in\mathbb{R}$.\end{definition}

\begin{definition}\label{def. 5.10}
Let $d\geqslant 0$. For each $t\in [1,\infty)$, the localized Bott map is defined as the morphism 
\[(\beta_{L})_{t}:\mathcal{S}\hat\otimes\mathbb{C}_{u,L,\infty}[(P_{d}(X_{n}))_{n\in\mathbb{N}}]\rightarrow \mathbb{C}_{u,L,  \infty}[(P_{d}(X_{n}),\mathcal{A}(V_{n}))_{n\in\mathbb{N}}]\]
by 
\[((\beta_{L})_{t}(f))(s)=\beta_{t}(f(s))\] for all $s\in\mathbb{R}_{+}$.
\end{definition}

\begin{lemma}\label{rem. 5.11}
 For each $d\geqslant 0$, the maps $(\beta_{t})_{t\geqslant 1}$ and $((\beta_{L})_{t})_{t\geqslant 1}$ extends to asymptotic morphisms
\begin{align*}
	\beta:\mathcal{S}\hat\otimes C^{*}_{u,\max \infty}((P_{d}(X_{n}))_{n\in\mathbb{N}})&\rightarrow C^{*}_{u,  \max,\infty}((P_{d}(X_{n}),\mathcal{A}(V_{n}))_{n\in\mathbb{N}}),\\	\beta_{L}:\mathcal{S}\hat\otimes C^{*}_{u,L,\max \infty}((P_{d}(X_{n}))_{n\in\mathbb{N}})&\rightarrow C^{*}_{u,L,  \max,\infty}((P_{d}(X_{n}),\mathcal{A}(V_{n}))_{n\in\mathbb{N}}).
\end{align*}
\end{lemma}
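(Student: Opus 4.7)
The plan is to verify, for each $t \geq 1$, that $\beta_t$ takes values in the algebraic twisted Roe algebra at infinity, that $\beta_t$ extends to the maximal completions, and that the resulting family $(\beta_t)_{t\geq 1}$ is continuous in $t$ and asymptotically linear, $*$-preserving, and multiplicative as $t\to\infty$. The localized version $(\beta_L)_t$ will then follow pointwise, since $\beta_t$ preserves propagation and hence the vanishing-of-$R(t)$ condition in Definition \ref{def. 5.7} is automatic.

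First I would verify that $\beta_t(g \hat\otimes T)$ lies in $\mathbb{C}_{u,\infty}[(P_{d}(X_{n}),\mathcal{A}(V_{n}))_{n\in\mathbb{N}}]$. Conditions (1)--(4) of Definition \ref{def. 5.5} are inherited directly from the corresponding properties of $T$, since tensoring with the bounded element $\beta(p_n(x))(g_t) \in \mathcal{A}(V_n)$ preserves propagation, local finiteness, and the row/column bounds. For condition (5), $\beta(p_n(x))(g_t)$ is defined by functional calculus on $X\hat\otimes 1 + 1\hat\otimes C_{V_n,\, t_{p_n(x)}(p_n(x))(s(p_n(x)))}$, so its support in $\mathbb{R}_+ \times V_n$ is contained in a ball of radius depending only on $g$ and $t$ centered at $t_{p_n(x)}(p_n(x))(s(p_n(x)))$. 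For condition (6) take $W_0(p_n(x))$ to be the $0$-dimensional affine subspace $\{t_{p_n(x)}(p_n(x))(s(p_n(x)))\}$, so that $\beta(p_n(x))(g_t) = \beta_{V_n, W_0(p_n(x))}(g_t)$ after the identification $\mathcal{A}(W_0(p_n(x))) \cong \mathcal{S}$. Condition (7) follows from the boundedness, at any fixed $t$, of the derivatives of $g_t$. The extension of $\beta_t$ to the maximal completions is then immediate: each $\beta(y) : \mathcal{S} \to \mathcal{A}(V_n)$ is a contractive $*$-homomorphism, so $\beta_t$ is bounded on elements of fixed propagation, and it extends by the universal property of the maximal norm.

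The main obstacle is verifying asymptotic multiplicativity. For generators $a = g_1 \hat\otimes T_1$ and $b = g_2 \hat\otimes T_2$ with uniformly finite propagation, the $n$-th component of $\beta_t(a)\beta_t(b) - \beta_t(ab)$ is the sum over $z$ of
\[
\bigl( \beta(p_n(x))(g_{1,t}) \cdot (t_{p_n(x)p_n(z)})_*(\beta(p_n(z))(g_{2,t})) - \beta(p_n(x))(g_{1,t} g_{2,t}) \bigr) \hat\otimes T_1^{(n)}(x,z)\, T_2^{(n)}(z, x').
\]
Using that $(t_{p_n(x)p_n(z)})_*(\beta(p_n(z))(g_{2,t}))$ is $\beta$ evaluated at the translated basepoint $t_{p_n(x)}(p_n(z))(s(p_n(z)))$, the difference reduces to comparing two Clifford-based functional calculi whose basepoints are separated by $\|t_{p_n(x)}(p_n(z))(s(p_n(z))) - t_{p_n(x)}(p_n(x))(s(p_n(x)))\|$. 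By the fibred coarse embedding of $(Y_n)_n$, this distance is bounded above by $\rho_2(d(p_n(x), p_n(z)))$, which is uniformly bounded in $n$ since $T_1$ has finite propagation and each $p_n$ is bornologous. A standard functional-calculus estimate, as in Yu's original Bott asymptotic morphism argument \cite{Yu00} and its fibred analogue in \cite{CWY13}, then shows that after rescaling by $t$ this difference is of order $O(t^{-1})$, hence vanishes uniformly in $n$ as $t \to \infty$. Asymptotic linearity and $*$-preservation follow from simpler parallel computations, and continuity of $t \mapsto \beta_t(a)$ is immediate from continuity of $t \mapsto g_t$ in $\mathcal{S}$.
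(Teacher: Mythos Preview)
Your approach is essentially the same as the paper's: reduce asymptotic multiplicativity to the basepoint-translation estimate $\|\beta(y)(g_t)-(t_{yy'})_*\beta(y')(g_t)\|\leq t^{-1}\rho_2(d(y,y'))$ coming from the fibred coarse embedding, then extend to the maximal completion via universality. Your explicit verification of conditions (1)--(7) of Definition~\ref{def. 5.5} is a useful addition that the paper leaves implicit.

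One point needs adjustment. You write that ``$\beta_t$ is bounded on elements of fixed propagation, and it extends by the universal property of the maximal norm.'' But for fixed $t$ the map $\beta_t$ is \emph{not} a $*$-homomorphism, so the universal property of the maximal norm does not apply to it directly. The paper handles this by reversing the order: first establish asymptotic multiplicativity on the algebraic level, so that the induced map
\[
\beta:\mathcal{S}\hat\otimes_{\mathrm{alg}}\mathbb{C}_{u,\infty}[(P_d(X_n))_n]\longrightarrow \mathcal{Q}\bigl(C^{*}_{u,\max,\infty}((P_d(X_n),\mathcal{A}(V_n))_n)\bigr)
\]
into the asymptotic algebra \emph{is} a genuine $*$-homomorphism with $\|\beta(g\hat\otimes T)\|\leq\|g\|\cdot\|T\|$; then extend $\beta$ to $C^{*}_{u,\max,\infty}$ by universality of the maximal norm, pass to the maximal tensor product, and finally invoke nuclearity of $\mathcal{S}$ to identify $\hat\otimes_{\max}$ with $\hat\otimes$. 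Reordering your argument in this way closes the gap.
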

\begin{proof}[\rm{\textbf{Proof.}}] We shall prove that the maps $(\beta_{t})_{t\geqslant 1}$ can be extended to the asymptotic morphism $\beta$ as above. The case for $\beta_{L}$ is quite similar. \par 
\textbf{Step 1.} We first show that the maps $(\beta_{t})_{t\geqslant 1}$ is a well-defined asymptotic morphism from $\mathcal{S}\hat\otimes\mathbb{C}_{u,\infty}[(P_{d}(X_{n}))_{n\in\mathbb{N}}]$ to 
$C^{*}_{u,  \max,\infty}((P_{d}(X_{n}),\mathcal{A}(V_{n}))_{n\in\mathbb{N}})$. It suffices to verify that $\beta_{t}(g\hat\otimes T)$ is norm continuous in $t\in [1,\infty)$, and 
\begin{align*}
	\begin{split}
		\lim_{t\rightarrow\infty}\left\{
		\begin{array}{c}
		\beta_{t}((g_{1}\hat\otimes T_{1})(g_{2}\hat\otimes T_{2}))-\beta_{t}(g_{1}\hat\otimes T_{1})\beta_{t}(g_{2}\hat\otimes T_{2})\\
		\beta_{t}((g_{1}\hat\otimes T_{1})+(g_{2}\hat\otimes T_{2}))-\beta_{t}(g_{1}\hat\otimes T_{1})+\beta_{t}(g_{2}\hat\otimes T_{2})\\
		\lambda\beta_{t}(g_{1}\hat\otimes T_{1})-\beta_{t}(\lambda (g_{1}\hat\otimes T_{1}))\\
	\beta_{t}((g_{1}\hat\otimes T_{1})^{*})-(\beta_{t}(g_{1}\hat\otimes T_{1}))^{*}
		\end{array}
		\right\}=0
	\end{split}
\end{align*}
for all $g_{1},g_{2}\in\mathcal{S}$,
$T_{1},T_{2}\in\mathbb{C}_{u,\infty}[(P_{d}(X_{n}))_{n\in\mathbb{N}}]$ and $\lambda\in\mathbb{C}$. It suffices to prove the first equality only, and the rest can be checked similarly. By Definition \ref{def. 5.5}, we calculate that
\begin{equation}\begin{aligned}\label{eq 2}
&\sup_{x,x'\in Z_{d,n},n\in\mathbb{N}}\|(\beta_{t}((g_{1}\hat\otimes T_{1})(g_{2}\hat\otimes T_{2})))^{(n)}(x,x')-(\beta_{t}(g_{1}\hat\otimes T_{1})\cdot\beta_{t}(g_{2}\hat\otimes T_{2}))^{(n)}(x,x')\|\\
	&=\sup_{x,x'\in Z_{d,n},n\in\mathbb{N}}  \|(\beta(y))(g_{1}g_{2})_{t}\hat\otimes (T_{1}T_{2})^{(n)}(x,x')\\
	&\quad-\sum_{\xi\in Z_{d,n}}
	((\beta_{t}(g_{1}\hat\otimes T_{1}))^{(n)}(x,\xi))\cdot ((t_{yy'})_{*}	(\beta_{t}(g_{2}\hat\otimes T_{2}))^{(n)}(\xi,x'))
	\|(\text{where}\ y=p_{n}(x))\\
	&=\sup_{x,x'\in Z_{d,n},n\in\mathbb{N}} \| \sum_{\xi\in Z_{d,n}}(\beta(y)(g_{1})_{t})(\beta(y)(g_{2})_{t})\hat\otimes T^{(n)}_{1}(x,\xi)T^{(n)}_{2}(\xi,x')\\
	&\quad-\sum_{\xi\in Z_{d,n}}(\beta(y)(g_{1})_{t})\hat\otimes T^{(n)}_{1}(x,\xi)\cdot 
	((t_{yy'})_{*}
	((\beta(y'))(g_{2})_{t}\hat\otimes T^{(n)}_{2}(\xi,x')))\|(\text{where}\ y'=p_{n}(\xi))\\
	&=\sup_{x,x'\in Z_{d,n},n\in\mathbb{N}} \| \sum_{\xi\in Z_{d,n}}\beta(y)(g_{1})_{t}
	[\beta(y)-(t_{yy'})_{*}
	\beta(y')](g_{2})_{t}
	\hat\otimes T^{(n)}_{1}(x,\xi)T^{(n)}_{2}(\xi,x')\|.\end{aligned}\end{equation}
where $t_{yy'}=t_{y}(w)\circ t^{-1}_{y'}(w):H\rightarrow H$ for all $w\in B_{Y_{n}}(y,l_{n})\cap B_{Y_{n}}(y'
,l_{n})$, and 
$(t_{yy'})_{*}$ is a map from $\mathcal{A}(W_{C}(y'))$ onto $\mathcal{A}(W_{C}(y))$
mapping $g\hat\otimes h$ to $g\hat\otimes (t_{yy'})_{*}(h)$
for any non-empty subset $C\subseteq B_{Y_{n}}(y,l_{n})\cap B_{Y_{n}}(y'
,l_{n})$.
Then we have that 
\begin{align*}
	W_{C}(y)&=t_{yy'}(W_{C}(y')),\\
	\beta_{W_{C}(y),\{t_{y}(y')(s(y'))\}}(g)&=(t_{yy'})_{*}(\beta_{W_{C}(y'),\{t_{y'}(y')(s(y'))\}}(g)),
\end{align*}
for all $g\in \mathcal{S}$.
By the Stone-Weierstrass theorem, we mainly consider the generators $g(x)=(x\pm i)^{-1}$ of $\mathcal{S}=C_{0}(\mathbb{R})$, together with Definition \ref{def. 5.4}, the formula $\beta(y)(g_{t})-(t_{yy'})_{*}(\beta(y')(g_{t}))$ in the middle of \eqref{eq 2} has the following estimates
\begin{align*}
	&\|\beta(y)(g_{t})-(t_{yy'})_{*}(\beta(y')(g_{t}))\|\\
	&=\|\beta_{V_{n},W_{C}(y)}\circ \beta_{W_{C}(y),\{t_{y}(y)(s(y))\}}(g_{t})-(t_{yy'})_{*}(\beta_{V_{n},W_{C}(y')}\circ \beta_{W_{C}(y'),\{t_{y'}(y')(s(y'))\}}(g_{t}))\|\\
	&=\|\beta_{V_{n},W_{C}(y)}\circ\beta_{W_{C}(y),\{t_{y}(y)(s(y))\}}(g_{t})-\beta_{V_{n},W_{C}(y)}\circ\beta_{W_{C}(y),\{t_{y}(y')(s(y'))\}}(g_{t})\|\\
    &\leqslant\|\beta_{W_{C}(y),\{t_{y}(y)(s(y))\}}(g_{t})-\beta_{W_{C}(y),\{t_{y}(y')(s(y'))\}}(g_{t})\|\\
	&=
	\|g_{t}(X\hat\otimes 1+1\hat\otimes C_{W_{C}(y),\{t_{y}(y)(s(y))\}})-g_{t}(X\hat\otimes 1+1\hat\otimes C_{W_{C}(y),\{t_{y}(y')(s(y'))\}})\|\\
	&\leqslant t^{-1}\|t_{y}(y)(s(y))-t_{y}(y')(s(y'))\|\\
	&\leqslant t^{-1}\rho_{2}(d(y,y'))\rightarrow 0 \quad (t\rightarrow \infty).
\end{align*}
It follows from an approximation argument,
together with \cite{Yu00} and Lemma 7.3 in \cite{HKT98}, that for all $d\geqslant 0$, $R>0$, $r>0$, $c>0$, $\epsilon >0$, there exists $t_{0}>1$ such that for every
$y,y'\in F_{d,n}$, $n\in\mathbb{N}$,  with $d(y,y')\leqslant R$, for all $t\geqslant t_{0}$ and all $g\in\mathcal{S}$ with 
\[\text{supp}(g)\subseteq[-r,r],\quad  \|g'\|_{\infty}\leqslant c,\] we have that
\begin{equation*}
	\|\beta(y)(g_{t})-(t_{yy'})_{*}(\beta(y')(g_{t}))\|<\epsilon.
\end{equation*} 
Combining Lemma 3.4 in \cite{GWY08}, the formula 
$\beta_{t}((g_{1}\hat\otimes T_{1})(g_{2}\hat\otimes T_{2}))-\beta_{t}(g_{1}\hat\otimes T_{1})\cdot\beta_{t}(g_{2}\hat\otimes T_{2})$ converges uniformly to $0$ as $t\rightarrow \infty$ in norm.
Consequently, the maps $(\beta_{t})_{t\geqslant 1}$ becomes a well-defined asymptotic morphism from $*$-algebra $\mathcal{S}\hat\otimes\mathbb{C}_{u,\infty}[(P_{d}(X_{n}))_{n\in\mathbb{N}}]$ to $C^{*}$-algebra
$C^{*}_{u,  \max,\infty}(P_{d}(X_{n}),\mathcal{A}(V_{n}))_{n\in\mathbb{N}}$. Therefore, the maps $(\beta_{t})_{t\geqslant 1}$ define a $*$-homomorphism $\beta$ from $*$-algebra $\mathcal{S}\hat\otimes\mathbb{C}_{u,\infty}[(P_{d}(X_{n}))_{n\in\mathbb{N}}]$ to the asymptotic $C^{*}$-algebra
\[\mathcal{Q}(C^{*}_{u,  \max,\infty}(P_{d}(X_{n}),\mathcal{A}(V_{n}))_{n\in\mathbb{N}}):=\frac{C_{b}([1,\infty),C^{*}_{u, \max,\infty}(P_{d}(X_{n}),\mathcal{A}(V_{n}))_{n\in\mathbb{N}})}{C_{0}([1,\infty),C^{*}_{u,  \max,\infty}(P_{d}(X_{n}),\mathcal{A}(V_{n}))_{n\in\mathbb{N}})}\]
satisfying $\|\beta(g\hat\otimes T)\|\leqslant\|g\|\cdot\|T\|$ for all $g\in\mathcal{S}$ and $T\in \mathbb{C}_{u,\infty}[(P_{d}(X_{n}))_{n\in\mathbb{N}}]$.

\textbf{Step 2.} We shall prove that the maps $(\beta_{t})_{t\geqslant 1}$ extend to an asymptotic morphism from 
$\mathcal{S}\hat\otimes C^{*}_{u,\max \infty}((P_{d}(X_{n}))_{n\in\mathbb{N}})$ to 
$C^{*}_{u,  \max,\infty}((P_{d}(X_{n}),\mathcal{A}(V_{n}))_{n\in\mathbb{N}})$.\par 
 It follows from the universal property of the maximal norm that $\beta$ extends to a linear map from the algebraic tensor product  $\mathcal{S}\hat\otimes_{\text{alg}} C^{*}_{u,\max \infty}((P_{d}(X_{n}))_{n\in\mathbb{N}})$ to 
$\mathcal{Q}(C^{*}_{u,  \max,\infty}(P_{d}(X_{n}),\mathcal{A}(V_{n}))_{n\in\mathbb{N}})$ satisfying 
$\|\beta(g\hat\otimes T)\|\leqslant\|g\|\cdot\|T\|$ for all $g\in\mathcal{S}$ and $T\in \mathbb{C}_{u,\infty}[(P_{d}(X_{n}))_{n\in\mathbb{N}}]$. 
Furthermore, by the
universality of the maximal tensor product, the map $\beta$ can be extended to an ${*}$-homomorphism from $\mathcal{S}\hat\otimes_{\max} C^{*}_{u,\max \infty}((P_{d}(X_{n}))_{n\in\mathbb{N}})$ to 
$\mathcal{Q}(C^{*}_{u,  \max,\infty}(P_{d}(X_{n}),\mathcal{A}(V_{n}))_{n\in\mathbb{N}})$. Since $\mathcal{S}$ is nuclear, we conclude that the maps $(\beta_{t})_{t\geqslant 1}$ extends to an asymptotic morphism from 
$\mathcal{S}\hat\otimes C^{*}_{u,\max \infty}((P_{d}(X_{n}))_{n\in\mathbb{N}})$ to 
$C^{*}_{u,  \max,\infty}((P_{d}(X_{n}),\mathcal{A}(V_{n}))_{n\in\mathbb{N}})$. 
The proof is completed.\end{proof}

\section{Reduction to cases with non-twisted coefficients}\label{section 6}
The aim of this section is to illustrate the following result.
\begin{theorem}\label{the. 6.1}
Let $(X_{n})_{n\in\mathbb{N}}$ be a sequence of discrete metric spaces with uniform bounded geometry which admits an A-by-FCE coarse fibration structure. Then the evaluation map
\[e_{*}^{\mathcal{A}}: \lim_{d\rightarrow\infty}K_{*}(C^{*}_{u, L, \max,\infty}((P_{d}(X_{n}),\mathcal{A}(V_{n}))_{n\in\mathbb{N}}) )     
\rightarrow\lim_{d\rightarrow\infty}K_{*}(C^{*}_{u,  \max,\infty}((P_{d}(X_{n}),\mathcal{A}(V_{n}))_{n\in\mathbb{N}}))\]
is an isomorphism.
\end{theorem}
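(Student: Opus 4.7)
The plan is to prove Theorem \ref{the. 6.1} by a cutting-and-pasting argument on the twisted algebras, using the local trivializations $t_y$ coming from the fibred coarse embedding of the base space $Y$ together with the equi-Property A of the fibers $\{p_n^{-1}(y)\}_{y \in Y_n, n \in \mathbb{N}}$. The overall pattern follows the strategy in \cite{CWY13} and its refinements, but equi-Property A enters crucially at the ``atomic'' stage.

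First, I would filter the maximal twisted Roe algebra at infinity by the radius parameter $r$ appearing in condition (5) of Definition \ref{def. 5.5}: let $C^{*}_{u,\max,\infty}((P_{d}(X_{n}),\mathcal{A}(V_{n}))_{n\in\mathbb{N}})_{r}$ denote the closure of the $*$-subalgebra of sequences whose supports lie in $r$-balls around $t_{p_{n}(x)}(p_{n}(x))(s(p_{n}(x)))$, and define the localization counterpart analogously. This yields a filtration of both algebras whose union is dense, and the evaluation map $e^{\mathcal{A}}$ is compatible with the filtrations. Since $K$-theory commutes with inductive limits of $C^{*}$-algebras, it suffices to show that for each fixed $r>0$ the restricted evaluation map is an isomorphism on $K$-theory in the inductive limit over $d$.

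Second, for fixed $r$, the condition $l_{n}\to\infty$ from the FCE of $Y$ ensures that eventually every support lies inside a single trivialization neighborhood. Mimicking the construction in Proposition \ref{prop. 4.8}, I would decompose each $Y_{n}$ into an $\omega$-excisive union $Y_{n}=Y_{n}^{(0)}\cup Y_{n}^{(1)}$ whose pieces are coarse disjoint unions of bounded subsets of diameter less than $l_{n}$; pulling back via $p_{n}$ yields an $\omega$-excisive decomposition of the $r$-truncated twisted algebras. The ensuing Mayer--Vietoris sequence reduces the problem to each side of the decomposition and to its intersection, all of which are direct sums over pieces sitting in single trivializations.

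Third, on each elementary piece associated with a bounded subset $B\subseteq Y_{n}$ of diameter $<l_{n}$, the trivialization $t_{y}$ identifies $\mathcal{A}(V_{n})$ with $\mathcal{A}(W)$ for a fixed finite-dimensional affine subspace $W\subseteq H$, so the $r$-truncated twisted algebra becomes (up to Morita equivalence) a tensor product of $\mathcal{A}(W)$ with the non-twisted Roe algebra of $p_{n}^{-1}(B)$. By condition (4) of Definition \ref{def. 2.4}, the family $\{p_{n}^{-1}(B)\}$ is uniformly coarsely equivalent to $\{p_{n}^{-1}(y)\}$, which has equi-Property A. Yu's theorem \cite{Yu00} then implies that the non-twisted evaluation map is an isomorphism uniformly across these pieces, and nuclearity of $\mathcal{S}$ together with the tensorial structure of $\mathcal{A}(W)=\mathcal{S}\hat{\otimes}\mathcal{C}(W)$ preserves this conclusion at the twisted level. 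A final application of the five lemma to the Mayer--Vietoris diagram, followed by passage to the inductive limit in $d$ (and hence in $r$), yields the theorem.

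The main obstacle is the technical bookkeeping needed to make the cutting compatible simultaneously with the support condition (5), with the trivialization transitions $(t_{yy'})_{*}$ built into the product formula, and with the differentiability condition (7); in particular one must verify that the $\omega$-excisive decomposition of $Y_{n}$ really produces closed ideals of the twisted algebra at infinity (not merely at each finite $n$), so that the Mayer--Vietoris sequence is valid and natural with respect to $e^{\mathcal{A}}$. The passage to the inductive limit over $d$ is also delicate because the trivialization scales $l_{n}$ and the truncation radius $r$ interact; the argument must be uniform in $n$, which is precisely where the \emph{equi}-Property A hypothesis rather than individual Property A is essential.
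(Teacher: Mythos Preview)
Your plan has the right architecture (filter by $r$, Mayer--Vietoris down to ``elementary'' pieces where the twisting trivializes, then invoke equi-Property~A) but it differs from the paper's proof in where the cutting happens, and this difference exposes a gap in your step~3. You cut the \emph{base space} $Y_n$ into an $\omega$-excisive union of bounded pieces and pull back via $p_n$; the paper instead cuts the \emph{coefficient support} in $\mathbb{R}_+\times V_n$, by introducing coherent systems $O^{(r,j)}=(O^{(r,j)}_{n,y})$ of open subsets and the associated ideals $C^{*}_{u,\max,\infty}((P_d(X_n),\mathcal{A}(V_n))_n)_{O^{(r,j)}}$. Bounded geometry of $Y_n$ gives a finite coloring $Y_n=\bigsqcup_{j=1}^{J_{r_0}}\Gamma_n^{(j)}$ so that the $r$-balls in $\mathbb{R}_+\times V_n$ centered at points of the same color are disjoint; this makes each $O^{(r,j)}$ what the paper calls $(\Gamma^{(j)},r)$-\emph{separate}, and the Mayer--Vietoris runs over these $J_{r_0}$ ideals, not over a two-piece $\omega$-excisive decomposition of $Y_n$.

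The reason this matters is your tensor-product claim: over a spatial piece $p_n^{-1}(B)$ with $\mathrm{diam}(B)<l_n$, the twisted product formula still carries the isometries $(t_{p_n(x)p_n(z)})_*=(t_{p_n(x)}(w)\circ t_{p_n(z)}^{-1}(w))_*$, which are genuinely nontrivial even inside a single trivialization chart (they are indexed by pairs of base points, not by the chart). So the algebra over $p_n^{-1}(B)$ is not \emph{a priori} isomorphic to $C^*(p_n^{-1}(B))\hat\otimes\mathcal{A}(W)$; you must first straighten the cocycle. The paper does this only after the coefficient support has been separated into disjoint balls $O_{n,\gamma,\gamma}$: the map $T^{(n)}_\gamma(x,x')\mapsto (t_{\gamma\,p_n(x)})_*T^{(n)}_\gamma(x,x')$ transports everything to the single trivialization centered at $\gamma$ and yields the isomorphism with $\lim_S A^*_\infty((B_{P_d(X_n)}(p_n^{-1}(\gamma),S))_\gamma)$ (Proposition~\ref{prop. 6.16}). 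It is precisely the disjointness of the balls (the $(\Gamma,r)$-separate condition) that makes this well defined, and also what forces --- via support condition~(5) and the coarse-embedding estimate $\rho_1$ --- the spatial support to lie in a cylinder around $p_n^{-1}(\gamma)$; at that point equi-Property~A enters exactly as you say. Your spatial decomposition does not automatically produce such a straightening, nor does it give two-sided ideals of the twisted algebra without an argument analogous to Remark~\ref{rem. 6.5}, so the ``tensor product up to Morita equivalence'' step needs an additional ingredient equivalent to the paper's Proposition~\ref{prop. 6.16}.
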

The proof proceeds by decomposing the twisted algebras at infinity into various smaller ideals supported on certain coherent systems, and then showing that the evaluation maps for those subalgebras are isomorphism on $K$-theory. 
Furthermore, we glue these ideals together, and Theorem \ref{the. 6.1} is established by applying a Mayer-Vietoris sequence argument and the five lemma.

\subsection{Twisted algebras supported on certain coherent systems}

In this subsection, we shall discuss ideals of the twisted Roe algebra at infinity supported on certain open subsets of $\mathbb{R}_{+}\times V_{n}$ for any $n\in \mathbb{N}$. Recall first that for any
finite-dimensional affine subspaces $V_{a},V_{b}\subseteq H$ with
$V_{a}\subseteq V_{b}$, and for any 
subset $O\subset \mathbb{R}_{+}\times V_{a}$, we define (see Figure \ref{fig.1})
    \[\overline{O}^{\beta_{ba}}=\{(t,v_{ba}+v_{a})\in\mathbb{R}_{+}\times V_{b}\mid (\sqrt{t^{2}+\|v_{ba}\|^{2}},v_{a})\in O\}.\]
The support of any function $a\in\mathcal{A}(V_{a})$ is the
complement of all points $(t,v)\in \mathbb{R}_{+}\times V_{a}$ such that there exists 
$g\in C_{0}(\mathbb{R}_{+}\times V_{a})$ such that
$g(t,v)\neq0 $ but $g\cdot a=0$. Moreover, we have that \[\text{Supp}(\beta_{ba}(a))=\overline{\text{Supp}(a)}^{\beta_{ba}}.\]

\begin{definition}\label{def. 6.2}
 A collection $O={(O_{n,y})}_{y\in Y_{n},n\in\mathbb{N}}$ of open subsets of $\mathbb{R}_{+}\times V_{n}$ is said to be a coherent system if, for all but finitely many $n\in\mathbb{N}$, the following conditions hold
 \begin{enumerate}
     \item [(1)]\hspace{0pt}
 for any non-empty subset $C\subseteq B_{Y_{n}}(y,\frac{{l_{n}}}{2})\cap B_{Y_{n}}(y',\frac{{l_{n}}}{2})$
with $y,y'\in Y_{n}, n\in\mathbb{N}$, we have that
\[O_{n,y}\cap (\mathbb{R}_{+}\times W_{C}(y))=t_{yy'}(O_{n,y'}\cap (\mathbb{R}_{+}\times W_{C}(y'))),\]
where
\begin{align*}
	W_{C}(y)&=\text{affine-span}\{t_{y}(z)s(z)\mid z\in C\}=t_{yy'}(W_{C}(y')),\\
	W_{C}(y')&=\text{affine-span}\{t_{y'}(z)s(z)\mid z\in C\}=t_{y'y}(W_{C}(y)),
\end{align*}
and 
$t_{yy'}=t_{y}(z)\circ t^{-1}_{y'}(z)$ is an affine isometry 
for all $z\in B_{Y_{n}}(y,{l_{n}})\cap B_{Y_{n}}(y',l_{n})$.
\item [(2)]\hspace{0pt}for any non-empty subset $C\subseteq B_{Y_{n}}(y,\frac{{l_{n}}}{2})$ with $y\in Y_{n}, n\in\mathbb{N}$, and any affine subspace $W$ satisfying $W_{C}(y)\subseteq W\subseteq V_{n}$, we have that
\[{\overline{O_{n,y}\cap (\mathbb{R}_{+}\times W_{C}(y))}}^{\beta_{W,W_{C}(y)}}\subseteq O_{n,y}\cap (\mathbb{R}_{+}\times W).
\]\end{enumerate}\end{definition}

\begin{example}\label{exm. 6.3}Below we provide a typical example of coherent system, which will be used in the sequel.

Fix $r>0$. Define
\[O_{n,y}:=\bigcup_{z\in B_{Y_{n}}(y,{l_{n}})}B_{\mathbb{R}_{+}\times V_{n}}(t_{y}(z)(s(z)),r)\]
for all $y\in Y_{n}$ and $n\in\mathbb{N}$. Then the collection $O={(O_{n,y})}_{y\in Y_{n},n\in\mathbb{N}}$ 
is a coherent systems of open subsets of $\mathbb{R_{+}}\times V_{n}$. Indeed,
let $C$ be any non-empty subset contained in $ B_{Y_{n}}(y,\frac{{l_{n}}}{2})\cap B_{Y_{n}}(y',\frac{{l_{n}}}{2})$ 
with $y,y'\in Y_{n}$ for sufficient large $n$. If there exists $z\in Y_{n}$ such that $B_{\mathbb{R}_{+}\times V_{n}}(t_{y}(z)(s(z)),r)\cap(\mathbb{R}_{+}\times W_{C}(y))\neq \emptyset.$
Then $z\in  B_{Y_{n}}(y,{l_{n}})\cap B_{Y_{n}}(y',{l_{n}})$. Since
\begin{align*}
	B_{\mathbb{R}_{+}\times V_{n}}(t_{y}(z)(s(z)),r)&=t_{yy'}(B_{\mathbb{R}_{+}\times V_{n}}(t_{y'}(z)(s(z)),r)),\\
	\mathbb{R}_{+}\times W_{C}(y)&= t_{yy'}(\mathbb{R}_{+}\times W_{C}(y')),
\end{align*}
we have that
$O_{n,y}\cap(\mathbb{R}_{+}\times W_{C}(y))=t_{yy'}(O_{n,y'}\cap(\mathbb{R}_{+}\times W_{C}(y'))).$
Moreover, if $C$ is denoted to be a non-empty subset of $B_{Y_{n}}(y,\frac{{l_{n}}}{2})$ and $W$ is an affine subspace satisfying $W_{C}(y)\subseteq W\subseteq V_{n}$. Using the  above formulas, we calculate that
\[{\overline{B_{\mathbb{R}_{+}\times V_{n}}(t_{y}{(z)(s(z))},r)\cap (\mathbb{R}_{+}\times W_{C}(y))}}^{\beta_{W,W_{C}(y)}}\subseteq B_{\mathbb{R}_{+}\times V_{n}}(t_{y}{(z)(s(z))},r).\]
It follows that
\[{\overline{O_{n,y}\cap (\mathbb{R}_{+}\times W_{C}(y))}}^{\beta_{W,W_{C}(y)}}\subseteq O_{n,y}\cap (\mathbb{R}_{+}\times W).\]
Hence, the collection $O:=(O_{n,y})_{y\in Y_{n},n\in\mathbb{N}}$ becomes a coherent system of open subsets of $\mathbb{R}_{+}\times V_{n}, n\in\mathbb{N}$.
Since the map $p_{n}$ is surjective for each 
$n\in\mathbb{N}$,
there always exists $x\in X_{n}$ such that $y=p_{n}(x)$. The above collection is sometimes written in the following form
\[O_{n,p_{n}(x)}:=\bigcup_{z\in B_{Y_{n}}(p_{n}(x),{l_{n}})}B_{\mathbb{R}_{+}\times V_{n}}(t_{p_{n}(x)}(z)(s(z)),r).\]
\end{example}

\begin{definition}\label{def. 6.4}
 Let $O:=(O_{n,y})_{y\in Y_{n},n\in\mathbb{N}}$ be a coherent system of open subsets of $\mathbb{R}_{+}\times V_{n},n\in \mathbb{N}$. For any $d\geqslant 0 $, define 
$ \mathbb{C}_{u,\infty}
[(P_{d}(X_{n}),\mathcal{A}(V_{n}))_{n\in\mathbb{N}}]_{O}$ to be the $\ast$-subalgebra of $\mathbb{C}_{u,\infty}
[(P_{d}(X_{n}),\mathcal{A}(V_{n}))_{n\in\mathbb{N}}]$ consisting of all the equivalent classes 
$T=[(T^{(0)},\dots,T^{(n)},\dots)]$ of sequences $(T^{(0)},\dots,T^{(n)},\dots)$
satisfying 
\[\text{Supp}(T^{(n)}(x,x'))\subseteq O_{n,p_{n}(\bar{x})}\] for all $x,x'\in Z_{d,n}$ with $p_{n}(x)\in \text{Star}(p_{n}(\bar{x}))$ for some $\bar{x}\in Z_{d,n}$ and $n\in\mathbb{N}$ large enough depending only on the sequence
$(T^{(0)},\dots,T^{(n)},\dots)$.
\end{definition}

\begin{remark}\label{rem. 6.5} Note that the above definition is well-defined. Indeed, as mentioned in Section \ref{section 3}, by choosing $H_{X}$ as the Hilbert space $\ell^{2}(Z)\hat\otimes H_{0}$, any bounded operator $T\in \mathcal{B}(\ell^{2}(Z)\hat\otimes H_{0})$ can be interpreted as a $Z$-by-$Z$ matrix. To ensure that $\mathbb{C}_{u,\infty}
[(P_{d}(X_{n}),\mathcal{A}(V_{n}))_{n\in\mathbb{N}}]_{O}$ is closed under $*$-operations, the open subset $O_{n,p_{n}(x')}\cap (\mathbb{R}_{+}\times W_{C}(p_{n}(x')))$, obtained by trivializing the operator $T^{(n)}(x,x')$ for each $n\in\mathbb{N}$ along the trivialization of the neighborhood of $x'$, must be able to be transformed through an isometry $t_{p_{n}(x)p_{n}(x')}$ with the open subset $O_{n,p_{n}(x)}\cap (\mathbb{R}_{+}\times W_{C}(p_{n}(x)))$
obtained by trivializing the operator along the trivalization of the neighborhood of $x$, which is exactly required in condition (1) of Definition \ref{def. 6.2}.

In addition, as shown in the following commutative diagram, the isometry $(t_{p_{n}(x)p_{n}(x')})_{*}$ only applies to the finite-dimensional subspace $W_{C}(p_{n}(x'))$. From Definition \ref{def. 5.4} there exists a natural $*$-isomorphism, here we abuse notations slightly and still denote this map as $(t_{p_{n}(x)p_{n}(x')})_{*}$,
from $\mathcal{A}(W_{C}(p_{n}(x')))$ onto $\mathcal{A}(t_{p_{n}(x)p_{n}(x')}(W_{C}(p_{n}(x'))))=\mathcal{A}(W_{C}(p_{n}(x)))$. Condition (2) of Definition \ref{def. 6.2} indicates that the extension of open subsets of this subspace should be the same as that of the open subset obtained by rotating the space to $V_ {n} $ through the $*$-homomorphism $\beta_{V_{n}, W_{C}(p_{n}(x))}$. As a result, the two conditions outlined in Definition \ref{def. 6.2} guarantee the set 
$\mathbb{C}_{u,\infty}
[(P_{d}(X_{n}),\mathcal{A}(V_{n}))_{n\in\mathbb{N}}]_{O}$
constructed above is closed under $*$-operations. Similarly, it can be verified that it is also closed for multiplication operations, thereby forming an $*$-algebra.

\begin{center}
\begin{tikzcd}
{\mathcal{A}(V_{n})}  \arrow[rr,"(t_{p_{n}(x)p_{n}(x')})_{*}"] &      &       {\mathcal{A}(V_{n})}           \\
{\mathcal{A}(W_{C}(p_{n}(x'))))} \arrow[rr,"(t_{p_{n}(x)p_{n}(x')})_{*}"] \arrow[u]      &     & \mathcal{A}(W_{C}(p_{n}(x)))  \arrow[u,"\beta_{V_{n}, W_{C}(p_{n}(x))}"']
\end{tikzcd}
\end{center}
\end{remark}

\begin{definition}\label{def. 6.6}
    Let $O:=(O_{n,y})_{y\in Y_{n},n\in\mathbb{N}}$ be a coherent system of open subsets of $\mathbb{R}_{+}\times V_{n},n\in \mathbb{N}$. 
    \begin{enumerate}
        \item [(1)]\hspace{0pt}We define the $C^{*}$-algebra 
 \[C^{*}_{u,\infty}
 ((P_{d}(X_{n}),\mathcal{A}(V_{n}))_{n\in\mathbb{N}})_{O}\] to be the completion of $\mathbb{C}_{u,\infty}
 [(P_{d}(X_{n}),\mathcal{A}(V_{n}))_{n\in\mathbb{N}}]_{O}$ with respect to the reduced norm, as in \cite[Remark 6.4]{GLWZ24}.
\item [(2)]\hspace{0pt}Viewing $\mathbb{C}_{u,\infty}
[(P_{d}(X_{n}),\mathcal{A}(V_{n}))_{n\in\mathbb{N}}]_{O}$
as a $\ast$-subalgebra of $C^{*}_{u,\max,\infty}
((P_{d}(X_{n}),\mathcal{A}(V_{n}))_{n\in\mathbb{N}})$, we define the $C^{*}$-algebra 
\[C^{*}_{u,\phi, \infty}
((P_{d}(X_{n}),\mathcal{A}(V_{n}))_{n\in\mathbb{N}})_{O}\] to be
the completion of $\mathbb{C}_{u,\infty}
[(P_{d}(X_{n}),\mathcal{A}(V_{n}))_{n\in\mathbb{N}}]_{O}$ under the norm in
$C^{*}_{u,\max,\infty}((P_{d}(X_{n}),\mathcal{A}(V_{n}))_{n\in\mathbb{N}})$.
\item [(3)]\hspace{0pt}We define the $C^{*}$-algebra 
 \[C^{*}_{u,\max,\infty}
 ((P_{d}(X_{n}),\mathcal{A}(V_{n}))_{n\in\mathbb{N}})_{O}\] to be the completion of $\mathbb{C}_{u,\infty}
 [(P_{d}(X_{n}),\mathcal{A}(V_{n}))_{n\in\mathbb{N}}]_{O}$ concerning the maximal norm.
    \end{enumerate}
\end{definition}
Until now, we do not know whether the norm induced from $C^{*}_{u,\max,\infty}((P_{d}(X_{n}),\mathcal{A}(V_{n}))_{n\in\mathbb{N}})$ is equal to the supremum norm of all representations of the $*$-algebra $\mathbb{C}_{u,\infty}
 [(P_{d}(X_{n}),\mathcal{A}(V_{n}))_{n\in\mathbb{N}}]_{O}$ (After some technical preparation, we will prove they are equal in the next section). 
 Note that there exists a canonical quotient map 
 \[\lambda_{\text{max,red}}:C^{*}_{u,\max, \infty}
 ((P_{d}(X_{n}),\mathcal{A}(V_{n}))_{n\in\mathbb{N}})_{O} \rightarrow C^{*}_{u,\infty}
 ((P_{d}(X_{n}),\mathcal{A}(V_{n}))_{n\in\mathbb{N}})_{O}.
 \]
Since the reduced representation of $*$-algebras $ \mathbb{C}_{u,\infty}
[(P_{d}(X_{n}),\mathcal{A}(V_{n}))_{n\in\mathbb{N}}]$ is contained in the maximal one, there exists a canonical quotient map that restricts to the ideals
\[\lambda_{\phi\text{,red}}:C^{*}_{u,\phi, \infty}
 ((P_{d}(X_{n}),\mathcal{A}(V_{n}))_{n\in\mathbb{N}})_{O} \rightarrow C^{*}_{u,\infty}
 ((P_{d}(X_{n}),\mathcal{A}(V_{n}))_{n\in\mathbb{N}})_{O}.
 \]
Moreover, by the universal property of the maximal norm, the canonical quotient map $\lambda_{\phi,\text{red}}$ can be lifted to the map $\lambda_{\text{max, red}}$ via a quotient map $\lambda_{\max,\phi}$, i.e.,
\begin{center}
\begin{tikzcd}
{C^{*}_{u,\max, \infty}
 ((P_{d}(X_{n}),\mathcal{A}(V_{n}))_{n\in\mathbb{N}})_{O}} \arrow[r,"\lambda_{\text{max, red}}"] \arrow[d,"\lambda_{\max,\phi}"'] & {C^{*}_{u, \infty}
 ((P_{d}(X_{n}),\mathcal{A}(V_{n}))_{n\in\mathbb{N}})_{O}}  \\
{C^{*}_{u,\phi, \infty}
 ((P_{d}(X_{n}),\mathcal{A}(V_{n}))_{n\in\mathbb{N}})_{O}} \arrow[r,"\lambda_{\phi, \text{red}}"]           & {C^{*}_{u, \infty}
 ((P_{d}(X_{n}),\mathcal{A}(V_{n}))_{n\in\mathbb{N}})_{O}} \arrow[u,Rightarrow,no head]                          \end{tikzcd}
\end{center}
Especially, J\'{a}n \v{S}pakula
and Rufus Willett \cite{SW13} proved that if a metric space $X$ has Property $A$, then the canonical map between the maximal and reduced (uniform) Roe algebras is an isomorphism.

\begin{definition}\label{def. 6.7}
Let $r>0$ and $\varGamma_{n}$ be a subset of $Y_{n}$ for each $n\in\mathbb{N}$, we denote $\varGamma:=(\varGamma_{n})_{n\in\mathbb{N}}$. 
A coherent system $O:=(O_{n,y})_{y\in Y_{n},n\in\mathbb{N}}$ of open subsets of $\mathbb{R}_{+}\times V_{n},n\in \mathbb{N}$, is said to be $(\varGamma,r)$-separate if there exist open subsets 
$(O_{n,y,\gamma})_{\gamma\in \varGamma_{n}\cap B_{Y_{n}}(y, l_{n})}$ of $\mathbb{R}_{+}\times V_{n}$ for all $y\in Y_{n},n\in\mathbb{N},$ such that
\begin{enumerate}
   \item [(1)] \hspace{0pt}$O_{n,y}=\bigcup_{\gamma\in \varGamma_{n}\cap B_{Y_{n}}(y,l_{n})}O_{n,y,\gamma};$ 
\item [(2)] \hspace{0pt}$O_{n,y,\gamma}\cap O_{n,y,\gamma'}=\emptyset$ for distinct $\gamma,\gamma'\in\varGamma_{n}\cap B_{Y_{n}}(y,l_{n})$;
\item [(3)] \hspace{0pt}$O_{n,y,\gamma}\subseteq B_{\mathbb{R}_{+}\times V_{n}}(t_{y}(\gamma)(s(\gamma)),r)$ for each  $\gamma\in\varGamma_{n}\cap B_{Y_{n}}(y,l_{n})$.
\end{enumerate}
\end{definition}
We have the following result.
\begin{lemma}\label{lem. 6.8}
 Suppose that a coherent system $O:=(O_{n,y})_{y\in Y_{n},n\in\mathbb{N}}$ is  $(\varGamma,r)$-separate for some
$\varGamma:=(\varGamma_{n})_{n\in\mathbb{N}}$ and $r>0$.
Then the $*$-homomorphism
\[\lambda_{\phi,\text{red}}:C^{*}_{u,\phi, \infty}
((P_{d}(X_{n}),\mathcal{A}(V_{n}))_{n\in\mathbb{N}})_{O} \rightarrow C^{*}_{u,\infty}
((P_{d}(X_{n}),\mathcal{A}(V_{n}))_{n\in\mathbb{N}})_{O}
\]
is an isomorphism.\end{lemma}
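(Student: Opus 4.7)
The plan is to show that the $(\varGamma, r)$-separation realizes $\mathbb{C}_{u,\infty}[(P_{d}(X_{n}),\mathcal{A}(V_{n}))_{n\in\mathbb{N}}]_{O}$ as (essentially) a uniform Roe-type $*$-algebra of a coarse disjoint union of fiber neighborhoods that has Property~A, so that by (the at-infinity analogue of) the \v{S}pakula--Willett theorem its maximal and reduced completions coincide. Since the $\phi$-norm always lies between the reduced and the maximal norm, it must then coincide with both, giving $\lambda_{\phi,\mathrm{red}}$ as an isomorphism.

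First I would use pairwise disjointness (Definition~\ref{def. 6.7}(2)) together with the fact that $C_{0}(\mathbb{R}_{+}\times V_{n})$ sits as the central subalgebra of $\mathcal{A}(V_{n})$ to orthogonally decompose each matrix entry of $T = [(T^{(n)})_{n\in\mathbb{N}}] \in \mathbb{C}_{u,\infty}[(P_{d}(X_{n}),\mathcal{A}(V_{n}))_{n\in\mathbb{N}}]_{O}$ as
\[
T^{(n)}(x,x') \;=\; \sum_{\gamma \in \varGamma_{n}\cap B_{Y_{n}}(p_{n}(x), l_{n})} T^{(n)}_{\gamma}(x,x'),
\]
with $\mathrm{Supp}\,T^{(n)}_{\gamma}(x,x') \subseteq O_{n, p_{n}(x), \gamma}$. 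Condition (5) of Definition~\ref{def. 5.5} together with the finite propagation makes the sum finite, and forces only $\gamma$ within a bounded distance of $p_{n}(x)$ (hence of $p_{n}(x')$) to contribute. Because each $O_{n,y,\gamma}$ lies inside the ball $B(t_{y}(\gamma)(s(\gamma)), r)$ of fixed radius $r$, the coefficient $T^{(n)}_{\gamma}(x,x')$, after translating its center to the origin via the affine structure of $V_{n}$, takes values in a fixed nuclear $C^{*}$-subalgebra of the form $C_{0}(B(0,r))\,\hat{\otimes}\,\text{Cliff}(W)\,\hat{\otimes}\,\mathcal{K}$ for some finite-dimensional $W \subseteq H$.

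Next, using the trivializations $t_{y}$ together with the coherence condition (Definition~\ref{def. 6.2}(1)) to align the transitions $t_{yy'}$, I would build a $*$-isomorphism of $\mathbb{C}_{u,\infty}[(P_{d}(X_{n}),\mathcal{A}(V_{n}))_{n\in\mathbb{N}}]_{O}$ onto a $*$-subalgebra of the uniform algebraic Roe algebra at infinity of the coarse disjoint union $\bigsqcup_{n,\gamma} p_{n}^{-1}(B_{Y_{n}}(\gamma, R))$ with coefficients in the nuclear algebra above, under which the product and adjoint defined in Definition~\ref{def. 5.5} correspond to the standard Roe-algebra operations.

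Finally, by conditions (3) and (4) of Definition~\ref{def. 2.4}, the family $\{p_{n}^{-1}(B_{Y_{n}}(\gamma, R))\}_{\gamma \in \varGamma_{n}, n\in\mathbb{N}}$ has equi-Property~A, so by the at-infinity version of \v{S}pakula--Willett~\cite{SW13} its uniform Roe algebra has coinciding maximal and reduced norms; tensoring with a nuclear $C^{*}$-algebra of coefficients preserves this equality, and transporting it through the $*$-isomorphism shows that the maximal and reduced norms on $\mathbb{C}_{u,\infty}[(P_{d}(X_{n}),\mathcal{A}(V_{n}))_{n\in\mathbb{N}}]_{O}$ agree. The $\phi$-norm is a $C^{*}$-norm trapped between these two, so it coincides with both, and $\lambda_{\phi,\mathrm{red}}$ is an isomorphism. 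The main obstacle will be step two: making the identification precise requires careful bookkeeping of the Clifford factors and the affine isometries $t_{yy'}$ across different trivializations in the coherent system, and verifying that the twisted product and adjoint of Definition~\ref{def. 5.5} match the standard Roe-algebra operations on the target under the identification.
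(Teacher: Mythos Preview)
Your proposal is correct and follows essentially the same route as the paper. The paper packages your steps one through three as a separate structural result (Proposition~\ref{prop. 6.16}), building a $*$-isomorphism from $\mathbb{C}_{u,\infty}[(P_{d}(X_{n}),\mathcal{A}(V_{n}))_{n\in\mathbb{N}}]_{O}$ onto the algebra $A_{\infty}[(B_{P_{d}(X_{n})}(p_{n}^{-1}(\gamma),S);\gamma\in\varGamma_{n})_{n\in\mathbb{N}}]$ (with varying coefficients $\mathcal{A}(O_{n,\gamma,\gamma})$ rather than your single ``fixed'' nuclear coefficient algebra), and then applies \v{S}pakula--Willett exactly as you describe; the only cosmetic difference is that the paper works with the cylinders $B_{P_{d}(X_{n})}(p_{n}^{-1}(\gamma),S)$ in the total space rather than your $p_{n}^{-1}(B_{Y_{n}}(\gamma,R))$, but these are uniformly coarsely equivalent by condition~(4) of Definition~\ref{def. 2.4}.
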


We shall postpone the proof to the next subsection. For the remainder of this part, we define the localization counterpart of twisted algebras at infinity supported on certain coherent systems.

\begin{definition}\label{def. 6.9}
Let $O:=(O_{n,y})_{y\in Y_{n},n\in\mathbb{N}}$ be a coherent system of open subsets of $\mathbb{R}_{+}\times V_{n},n\in \mathbb{N}$. For any $d\geqslant 0 $, define 
$\mathbb{C}_{u,L,\infty}
[(P_{d}(X_{n}),\mathcal{A}(V_{n}))_{n\in\mathbb{N}}]_{O}$ to be the $\ast$-subalgebra of $\mathbb{C}_{u,L,\infty}
[(P_{d}(X_{n}),\mathcal{A}(V_{n}))_{n\in\mathbb{N}}]$ consisting of all functions
\[f:\mathbb{R}_{+}\rightarrow\mathbb{C}_{u, \infty}
[(P_{d}(X_{n}),\mathcal{A}(V_{n}))_{n\in\mathbb{N}}]_{O}.\]\end{definition} 
\begin{definition}\label{def. 6.10}
    Let $O:=(O_{n,y})_{y\in Y_{n},n\in\mathbb{N}}$ be a coherent system of open subsets of $\mathbb{R}_{+}\times V_{n},n\in \mathbb{N}$. 
    \begin{enumerate}
        \item [(1)]\hspace{0pt}The $C^{*}$-algebra
 $C^{*}_{u,L,\infty}
 ((P_{d}(X_{n}),\mathcal{A}(V_{n}))_{n\in\mathbb{N}})_{O}$ is defined to be the completion of $\mathbb{C}_{u,L,\infty}
 [(P_{d}(X_{n}),\mathcal{A}(V_{n}))_{n\in\mathbb{N}}]_{O}$ with respect to the reduced norm.
\item [(2)]\hspace{0pt}Viewing $\mathbb{C}_{u,L,\infty}
[(P_{d}(X_{n}),\mathcal{A}(V_{n}))_{n\in\mathbb{N}}]_{O}$
as a $\ast$-subalgebra of $C^{*}_{u,L,\max,\infty}
((P_{d}(X_{n}),\mathcal{A}(V_{n}))_{n\in\mathbb{N}})$, we define the $C^{*}$-algebra
$C^{*}_{u,L,\phi, \infty}
((P_{d}(X_{n}),\mathcal{A}(V_{n}))_{n\in\mathbb{N}})_{O}$ to be
the completion of $\mathbb{C}_{u,L,\infty}
[(P_{d}(X_{n}),\mathcal{A}(V_{n}))_{n\in\mathbb{N}}]_{O}$ under the norm in
$C^{*}_{u,L,\max,\infty}((P_{d}(X_{n}),\mathcal{A}(V_{n}))_{n\in\mathbb{N}})$.
\item [(3)]\hspace{0pt}The $C^{*}$-algebra 
 $C^{*}_{u,L,\max,\infty}
 ((P_{d}(X_{n}),\mathcal{A}(V_{n}))_{n\in\mathbb{N}})_{O}$ is defined to be the completion of $\mathbb{C}_{u,L,\infty}
 [(P_{d}(X_{n}),\mathcal{A}(V_{n}))_{n\in\mathbb{N}}]_{O}$ concerning the maximal norm.
    \end{enumerate}
\end{definition}

Analogue to the proof of Lemma 6.5 in \cite{CWY13}, one can verifies the $C^{*}$-algebras
 $C^{*}_{u,\phi, \infty}
((P_{d}(X_{n}),\mathcal{A}(V_{n}))_{n\in\mathbb{N}})_{O}$ and $C^{*}_{u,L,\phi, \infty}
((P_{d}(X_{n}),\mathcal{A}(V_{n}))_{n\in\mathbb{N}})_{O}$ 
are closed two-sided ideals of the $C^{*}$-algebras
$C^{*}_{u,\max,\infty}
((P_{d}(X_{n}),\mathcal{A}(V_{n}))_{n\in\mathbb{N}})$ and  $C^{*}_{u,L,\max, \infty}
((P_{d}(X_{n}),\mathcal{A}(V_{n}))_{n\in\mathbb{N}})$, respectively.

\begin{lemma}\label{lem. 6.11}
If $O:=(O_{n,y})_{y\in Y_{n},n\in\mathbb{N}}$ is a coherent system of open subsets of $\mathbb{R}_{+}\times V_{n},n\in \mathbb{N}$,  
then the $*$-homomorphism
\[(\lambda_{L,\phi,\text{red}})_{*}:K_{*}(C^{*}_{u,L,\phi, \infty}
((P_{d}(X_{n}),\mathcal{A}(V_{n}))_{n\in\mathbb{N}})_{O}) \rightarrow K_{*}(C^{*}_{u,L,\infty}
((P_{d}(X_{n}),\mathcal{A}(V_{n}))_{n\in\mathbb{N}})_{O})
\]restricted by the canonical quotient map on $K$-theory is an isomorphism.
\end{lemma}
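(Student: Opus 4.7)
The plan is to reduce the $K$-theoretic statement to the (full) $C^{*}$-algebraic isomorphism of Lemma~\ref{lem. 6.8} by covering the coherent system $O$ by a finite collection of $(\Gamma^{(i)},r)$-separate sub-coherent-systems and assembling the pieces via a Mayer--Vietoris argument in operator $K$-theory. Since both $C^{*}_{u,L,\phi,\infty}((P_{d}(X_{n}),\mathcal{A}(V_{n}))_{n\in\mathbb{N}})_{O}$ and $C^{*}_{u,L,\infty}((P_{d}(X_{n}),\mathcal{A}(V_{n}))_{n\in\mathbb{N}})_{O}$ are completions of the common dense $*$-subalgebra $\mathbb{C}_{u,L,\infty}[\cdots]_{O}$, the argument amounts to exploiting locality in $V_{n}$ to reduce the comparison to the separate case, where the two norms coincide outright.

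First, I would construct the cover. Fix $r>0$ and choose a maximal $r$-separated net $\Gamma_{n}\subset Y_{n}$. Because $(Y_{n})$ has uniform bounded geometry, the number of points of $\Gamma_{n}$ meeting any star $B_{Y_{n}}(y,l_{n})$ is uniformly bounded, so a standard chromatic partition yields $\Gamma_{n}=\bigsqcup_{i=1}^{N}\Gamma^{(i)}_{n}$ (with $N$ independent of $n$) such that, for each colour $i$, the balls $B_{\mathbb{R}_{+}\times V_{n}}(t_{y}(\gamma)(s(\gamma)),r/4)$ indexed by $\gamma\in\Gamma^{(i)}_{n}\cap B_{Y_{n}}(y,l_{n})$ are mutually disjoint. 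Setting
\[
O^{(i)}_{n,y}:=\bigcup_{\gamma\in\Gamma^{(i)}_{n}\cap B_{Y_{n}}(y,l_{n})}\Bigl(O_{n,y}\cap B_{\mathbb{R}_{+}\times V_{n}}(t_{y}(\gamma)(s(\gamma)),r/4)\Bigr),
\]
the intertwining identity $t_{yy'}\circ t_{y'}(\gamma)(s(\gamma))=t_{y}(\gamma)(s(\gamma))$ together with Definition~\ref{def. 6.2} shows that each $O^{(i)}$ is a $(\Gamma^{(i)},r/4)$-separate coherent sub-system, and every finite intersection $O^{(I)}:=\bigcap_{i\in I}O^{(i)}$ remains separate (indexed by the product net). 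Choosing $r$ sufficiently large relative to the uniform support bound of elements of $\mathbb{C}_{u,L,\infty}[\cdots]_{O}$ in condition~(5) of Definition~\ref{def. 5.5}, the family $\{O^{(i)}\}$ covers $O$ up to the relevant operator supports.

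Second, a smooth partition of unity $\{\varphi_{i}\}$ subordinate to this cover -- built by pulling back a fixed partition of unity on the model Hilbert space $H$ along the trivializations $t_{y}$ and acting as central multipliers on $\mathcal{A}(V_{n})$ -- decomposes every element of $\mathbb{C}_{u,L,\infty}[\cdots]_{O}$ as a finite sum of elements of the sub-system ideals $\mathbb{C}_{u,L,\infty}[\cdots]_{O^{(i)}}$. For each separate sub-system $O^{(I)}$, applying Lemma~\ref{lem. 6.8} pointwise in $t$ gives $\|f(t)\|_{\phi}=\|f(t)\|_{\mathrm{red}}$, and taking supremum over $t$ yields a genuine $C^{*}$-algebra isomorphism $C^{*}_{u,L,\phi,\infty}(\cdots)_{O^{(I)}}\cong C^{*}_{u,L,\infty}(\cdots)_{O^{(I)}}$. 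A finite induction on $N$ via the six-term Mayer--Vietoris exact sequence applied to the pair $\bigl(\bigcup_{i<k}O^{(i)},\,O^{(k)}\bigr)$ at each stage, combined with the five-lemma, promotes this isomorphism to the desired $K$-theoretic statement for the whole of $O$.

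The main obstacle is verifying the technical conditions that make this scheme go through: one must check that multiplication by the central partition of unity preserves conditions (5)--(7) of Definition~\ref{def. 5.5} (in particular that derivatives in the $W_{k}$ direction remain uniformly bounded) and that the pair $\bigl(C^{*}_{u,L,\phi,\infty}(\cdots)_{O^{(1)}\cup\cdots\cup O^{(k-1)}},\,C^{*}_{u,L,\phi,\infty}(\cdots)_{O^{(k)}}\bigr)$ is $\omega$-excisive in both the $\phi$- and reduced completions, so that the Mayer--Vietoris ladder is natural with respect to $\lambda_{L,\phi,\mathrm{red}}$. Once these compatibility checks are carried out, the five-lemma closes the induction and establishes that $(\lambda_{L,\phi,\mathrm{red}})_{*}$ is an isomorphism on $K$-theory.
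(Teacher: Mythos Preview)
Your approach is genuinely different from the paper's, and while the underlying idea is not unreasonable, it carries a real gap and is considerably more laborious than what the paper actually does.

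The paper does \emph{not} decompose the coherent system $O$ at all. Instead it exploits the fact that the $K$-theory of \emph{localization} algebras behaves like a homology theory on the Rips complex: it satisfies homotopy invariance and a Mayer--Vietoris sequence with respect to decompositions of $P_{d}(X_{n})$ (cf.\ \cite{Yu97}). This allows one to reduce the statement to the $0$-skeleton of $P_{d}(X_{n})$, i.e.\ to a discrete space where all propagations are zero. There the algebraic twisted Roe algebra is simply a product over vertices of copies of $\mathcal{K}\hat\otimes\mathcal{A}(V_{n})$, on which the $\phi$-norm and the reduced norm coincide trivially. No appeal to Lemma~\ref{lem. 6.8}, no separation hypothesis on $O$, and no partition of unity in $V_{n}$ is needed. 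The whole point is that the ``$L$'' in the localization algebra kills the distinction between the completions at the level of $K$-theory.

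Your route instead runs Mayer--Vietoris in the \emph{coefficient} direction, decomposing $O$ into $(\Gamma^{(i)},r)$-separate pieces and invoking Lemma~\ref{lem. 6.8} on each piece. This is essentially the cutting-and-pasting scheme the paper uses later in the proof of Theorem~\ref{the. 6.1}, but transplanted here. The main gap is your covering claim: a \emph{general} coherent system $O$ need not be contained in any finite union of balls of fixed radius about the section points $t_{y}(\gamma)(s(\gamma))$. Your phrase ``covers $O$ up to the relevant operator supports'' hides a direct-limit argument over the support radius $r$ in condition~(5) of Definition~\ref{def. 5.5}, which you would have to make explicit (and which then forces you to check that all the Mayer--Vietoris data are compatible with that limit). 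Even granting this, you must still verify that intersecting $O$ with the ball-systems of Example~\ref{exm. 6.3} yields coherent systems (it does, since $\overline{\,\cdot\,}^{\beta}$ commutes with intersections), that the chromatic partition actually produces disjoint balls in $V_{n}$ (this needs the lower control function $\rho_{1}$, not just bounded geometry of $Y_{n}$), and the excision and naturality conditions you yourself flag. None of this is insurmountable, but it is substantial bookkeeping that the paper's two-line reduction to the $0$-skeleton avoids entirely.
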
 
\begin{proof}[\rm{\textbf{Proof.}}]Since the $K$-theory of localization algebras $C^{*}_{u,L,\phi, \infty}
((P_{d}(X_{n}),\mathcal{A}(V_{n}))_{n\in\mathbb{N}})_{O}$ and $C^{*}_{u,L, \infty}
((P_{d}(X_{n}),\mathcal{A}(V_{n}))_{n\in\mathbb{N}})_{O}$ both
satisfy the homotopy invariance and the Mayer-Vietoris argument on $P_{d}(X_{n})$ for each $n\in\mathbb{N}$, it is sufficient to prove the Lemma \ref{lem. 6.11} for the 0-skeleton of each $P_{d}(X_{n})$. In this case, we can assume the metric of any two distinct points to be infinite as the topology induced by this metric remains the discrete topology. With this metric, the algebraic twisted Roe algebra is isomorphic to the direct product of the twisted Roe algebra of each vertex of $P_{d}(X_{n})$. Since the (maximal or reduced) twisted Roe algebra of single point is isomorphic to $\mathbb{C}\otimes\mathcal{K}\otimes \mathcal{A}(V_{n})\cong\mathcal{K}\otimes\mathcal{A}(V_{n})$, 
the quotient map $\lambda_{L,\phi,\text{red}}$ is an isomorphism for the 0-skeleton of $P_{d}(X_{n})$. The proof is completed.
\end{proof}
We can define an evaluation-at-zero map
\[e:C^{*}_{u, L, \max,\infty}((P_{d}(X_{n}),\mathcal{A}(V_{n}))_{n\in\mathbb{N}})_{O}\rightarrow
C^{*}_{u, \max,\infty}((P_{d}(X_{n}),\mathcal{A}(V_{n}))_{n\in\mathbb{N}})_{O}
\]
by $e(f)=f(0)$, which induces a $*$-homomorphism on $K$-theory
\[e_{*}:\lim_{d\rightarrow\infty}K_{*}(C^{*}_{u, L, \max,\infty}((P_{d}(X_{n}),\mathcal{A}(V_{n}))_{n\in\mathbb{N}})_{O})
\rightarrow\lim_{d\rightarrow\infty}K_{*}(C^{*}_{u,  \max,\infty}((P_{d}(X_{n}),\mathcal{A}(V_{n}))_{n\in\mathbb{N}})_{O}).\]

\subsection{$K$-theory of ideals of the maximal twisted Roe algebras at infinity}
In this section, we shall characterize ideals of the twisted algebra at infinity supported on certain coherent systems which are separated by subsets of $X_{n},n\in\mathbb{N}$, and prove that the evaluation map restricted to each ideal is an isomorphism. Our main result of this subsection is as follows.

\begin{theorem}\label{the. 6.12}
Let $O:=(O_{n,y})_{y\in Y_{n},n\in\mathbb{N}}$ be a coherent system of open subsets of $\mathbb{R}_{+}\times V_{n}, n\in\mathbb{N}$. If $O$ is  $(\varGamma,r)$-separate,
then the evaluation homomorphism on $K$-theory
\[e_{*}:\lim_{d\rightarrow\infty}K_{*}(C^{*}_{u, L, \max,\infty}((P_{d}(X_{n}),\mathcal{A}(V_{n}))_{n\in\mathbb{N}})_{O})
\rightarrow\lim_{d\rightarrow\infty}K_{*}(C^{*}_{u,  \max,\infty}((P_{d}(X_{n}),\mathcal{A}(V_{n}))_{n\in\mathbb{N}})_{O})\]is an isomorphism.
\end{theorem}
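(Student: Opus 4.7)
The plan is to exploit the $(\Gamma,r)$-separation of $O$ to reduce Theorem \ref{the. 6.12} to an isomorphism statement that follows from Yu's theorem for Property A spaces. As a preliminary step, Lemma \ref{lem. 6.8} identifies the $\phi$-norm with the reduced norm at the $C^*$-level on the Roe side, while Lemma \ref{lem. 6.11} identifies their $K$-theories on the localization side. To also collapse the maximal norm onto these, I would argue, using that the entry-supports lie in fixed-radius balls of $\mathbb{R}_+\times V_n$ together with the equi-Property A of the fiber spaces $\{p_n^{-1}(y)\}$, that every $*$-representation of the $*$-algebra $\mathbb{C}_{u,\infty}[(P_d(X_n),\mathcal{A}(V_n))_{n\in\mathbb{N}}]_O$ factors through its $\phi$-completion; this is analogous to the \v{S}pakula--Willett identification for Property A spaces. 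Once the three norms agree throughout, the problem reduces to showing $e_*$ is an isomorphism at the level of reduced (or equivalently, $\phi$) algebras.

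Next I would use the disjointness $O_{n,y} = \bigsqcup_{\gamma \in \Gamma_n \cap B_{Y_n}(y,l_n)} O_{n,y,\gamma}$ to decompose each element of the twisted algebra as a uniformly bounded sum of pieces indexed by $\gamma$. For a single $\gamma$, the coefficient factor $\mathcal{A}(V_n)$, restricted to the ball $B_{\mathbb{R}_+\times V_n}(t_y(\gamma)(s(\gamma)),r)$, is (after trivialising via $t_y$) $K$-theoretically equivalent to a finite-dimensional, essentially constant piece. Simultaneously, the operator factor is supported in the $P_d(X_n)$-variables within a uniformly bounded neighbourhood of the fiber $p_n^{-1}(\gamma)$, by the bornology condition (1) and the uniform coarse equivalence condition (4) of Definition \ref{def. 2.4}. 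Hence each single-$\gamma$ ideal is, up to $K$-theoretic equivalence and after applying the evaluation, the localization/Roe pair for the coarse disjoint union of the fibers $\{p_n^{-1}(\gamma)\}_{n\in\mathbb{N}}$ with contractible coefficients.

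Since the family $\{p_n^{-1}(\gamma)\}$ has equi-Property A by condition (3) of Definition \ref{def. 2.4}, Yu's theorem from \cite{Yu00} (in its localization-algebra formulation of \cite{Yu97}) produces the required isomorphism of the evaluation map on $K$-theory for each single-$\gamma$ ideal, with uniform constants. I would then assemble these local isomorphisms across $\gamma \in \Gamma_n$ by a Mayer--Vietoris decomposition at the $Y_n$-level, using the coherence conditions of Definition \ref{def. 6.2} to ensure that the various sub-ideals fit into $\omega$-excisive pairs, and applying the five-lemma at each stage. Finally, passing to the inductive limit as $d \to \infty$ and using that the at-infinity quotient collapses $\oplus$ into $\prod/\oplus$, we obtain the desired global isomorphism $e_*$.

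The hardest part will be the uniform control required to pass local isomorphisms through both the indexing by $\gamma \in \Gamma_n$ and the at-infinity quotient as $n \to \infty$, since $K$-theory does not commute with infinite products in general. The uniform bounded geometry of $(X_n)$, the equi-Property A of the fibers, and the uniformity of the trivializations $t_y$ and functions $\rho_1,\rho_2$ in the fibred coarse embedding of $(Y_n)$ jointly ensure that Yu's isomorphism is realised with constants independent of $\gamma$ and $n$; this is precisely what permits the family of local isomorphisms to glue into a global one, in the same spirit as \cite{CWY13} and \cite{DWY23}.
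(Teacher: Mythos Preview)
Your overall strategy---collapse the maximal, $\phi$, and reduced norms using equi-Property A of the fibers, decompose by the disjoint $\gamma$-pieces, and then invoke Yu's theorem---is the paper's. The paper implements it by introducing auxiliary algebras $A^*_\infty((I_\gamma;\gamma\in\Gamma_n)_{n\in\mathbb{N}})$ (Definition \ref{def. 6.13}), explicitly built as products over $\gamma\in\Gamma_n$ inside the at-infinity quotient, with $I_\gamma$ the cylinder $B_{P_d(X_n)}(p_n^{-1}(\gamma),S)$. Proposition \ref{prop. 6.16} shows that the $O$-supported twisted algebras are isomorphic to inductive limits (over $S$) of these $A^*$-algebras; in the course of that proof the \v{S}pakula--Willett identification on the cylinders forces $\|\cdot\|_{\max}=\|\cdot\|_\phi=\|\cdot\|_{\mathrm{red}}$ on the whole $O$-supported subalgebra in one step, which is the clean version of your first paragraph. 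Then Proposition \ref{prop. 6.17} applies Yu's theorem once at the $A^*$-level to the sequence $(P_d(p_n^{-1}(\gamma)))_{\gamma,n}$ and finishes.

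Where your proposal diverges is the suggested Mayer--Vietoris step to ``assemble these local isomorphisms across $\gamma\in\Gamma_n$.'' That is not needed here and is the wrong tool: $(\Gamma,r)$-separation means the $O_{n,y,\gamma}$ are pairwise \emph{disjoint}, so the algebra on $O$ is already a product over $\gamma$, not a cover requiring excision. Trying to run Mayer--Vietoris over $\gamma$ also runs into the problem that $\#\Gamma_n$ grows with $n$, so a finite induction does not commute with the at-infinity quotient---exactly the uniformity concern you flag in your last paragraph. The paper avoids this completely by packaging the product over $\gamma$ into the very definition of $A^*_\infty$. The Mayer--Vietoris argument is instead used one level up, in the proof of Theorem \ref{the. 6.1}, where one glues \emph{finitely many} different $(\Gamma^{(j)},r)$-separate systems $O^{(r,j)}$ whose supports genuinely overlap; the bounded-geometry of $(Y_n)_n$ is what bounds that number independently of $n$.
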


For each $n\in\mathbb{N}$, let $\varGamma_{n}$ be a subset of $Y_{n}$  and denote $\varGamma:=(\varGamma_{n})_{n\in\mathbb{N}}$. 
Let 
$(I_{\gamma})_{\gamma\in\varGamma_{n}}$ be a family of subsets
of $P_{d}(X_{n})$ for some $d\geqslant 0$ and $n\in\mathbb{N}$ such that: (1) $\gamma\in p_{n}(I_{\gamma})$ for each $\gamma\in\varGamma_{n},n\in\mathbb{N}$; (2)
$(p_{n}(I_{\gamma}))_{\gamma\in\varGamma_{n},n\in\mathbb{N}}$ is uniformly bounded, i.e. there exists $K>0$ such that $diameter(p_{n}(I_{\gamma}))\leqslant K$ for all $\gamma\in\varGamma_{n},n\in\mathbb{N}$. In the following discussion, we mainly consider the case where $I_{\gamma}$ is a \emph{cylinder}, i.e.
\[I_{\gamma}=B_{P_{d}(X_{n})}(p^{-1}_{n}(\gamma),S):=\{x\in P_{d}(X_{n})\mid d(x,p^{-1}_{n}(\gamma))\leqslant S\}\] for some common $S\geqslant 0$ for all $\gamma\in\varGamma_{n}, n\in\mathbb{N}$.
To prove Theorem \ref{the. 6.12}, we construct the following algebras.

\begin{definition}\label{def. 6.13}
 For any $\gamma\in\varGamma_{n},n\in\mathbb{N}$,
denote by $\mathcal{A}(O_{n,\gamma,\gamma})$ the C*-subalgebra
of $\mathcal{A}(V_{n})$ generated by the functions whose supports are contained in $O_{n,\gamma,\gamma}$.
Define $A_{\infty}[(I_{\gamma};\gamma\in\varGamma_{n})_{n\in\mathbb{N}}]$ to be the $\ast$-subalgebra of 
\begin{equation}\label{equ. 3}
	\frac{\prod_{n\in\mathbb{N}}(\bigoplus_{\gamma\in\varGamma_{n}}\mathbb{C}[I_{\gamma}]\hat\otimes\mathcal{A}(O_{n,\gamma,\gamma}))
	}{\bigoplus_{n\in\mathbb{N}}(\bigoplus_{\gamma\in\varGamma_{n}}\mathbb{C}[I_{\gamma}]\hat\otimes\mathcal{A}(O_{n,\gamma,\gamma}))}
\end{equation}
consisting of all 
elements of the form
$[(T^{(0)},\dots,T^{(n)},\dots)]$,
where
\[T^{(n)}=\bigoplus_{\gamma\in\varGamma_{n}}
T^{(n)}_{\gamma}\] with 
$T^{(n)}_{\gamma}\in\mathbb{C}[I_{\gamma}]\hat\otimes\mathcal{A}(O_{n,\gamma,\gamma})$ and when viewed as functions 
\[T^{(n)}_{\gamma}:(Z_{d,n}\times Z_{d,n})\cap (I_{\gamma}\times I_{\gamma}) \rightarrow \mathcal{A}(O_{n,\gamma,\gamma})\hat\otimes\mathcal{K}\]
the collection $(T^{(n)}_{\gamma})_{\gamma\in\varGamma_{n},n\in\mathbb{N}}$ satisfies the conditions in Definition \ref{def. 6.13} with uniform constants.\par 
By completing each algebraic Roe algebra $\mathbb{C}[I_{\gamma}]$ in (\ref{equ. 3}) into the reduced Roe algebra $C^{*}(I_{\gamma})$, we obtain a reduced completion of $*$-subalgebra $A_{\infty}[(I_{\gamma};\gamma\in\varGamma_{n})_{n\in\mathbb{N}}]$ and denoted by 
\[A^{*}_{\infty}((I_{\gamma};\gamma\in\varGamma_{n})_{n\in\mathbb{N}}).\]Since the collection
$(I_{\gamma})_{\gamma\in\varGamma_{n},n\in\mathbb{N}}$ has bounded geometry, we can also define the maximal completion of $*$-subalgebra $A_{\infty}[(I_{\gamma};\gamma\in\varGamma_{n})_{n\in\mathbb{N}}]$, denoted by \[A^{*}_{\max,\infty}((I_{\gamma};\gamma\in\varGamma_{n})_{n\in\mathbb{N}}),\] as the completion with respect to the maximal norm
\[\|T\|_{\max}:=\sup\{\|\phi(T)\|_{\mathcal{B}(H_{\phi})}	\mid\phi:	A_{\infty}[(I_{\gamma};\gamma\in\varGamma_{n})_{n\in\mathbb{N}}]\rightarrow\mathcal{B}(H_{\phi}), \text{a}\ \ast\text{-representation}\}.\]
\end{definition}

Note that the $C^{*}$-algebra $A^{*}_{\max,\infty}((I_{\gamma};\gamma\in\varGamma_{n})_{n\in\mathbb{N}})$ and $ A^{*}_{\infty}((I_{\gamma};\gamma\in\varGamma_{n})_{n\in\mathbb{N}})$ are not equal in general for the maximal norm may not equal to the supremum norm of all representations of $*$-subalgebra $A_{\infty}[(I_{\gamma};\gamma\in\varGamma_{n})_{n\in\mathbb{N}}]$. However, the collection $(p_{n}(I_{\gamma}))_{\gamma\in\varGamma_{n},n\in\mathbb{N}}$ is required uniformly bounded for all $\gamma\in\varGamma_{n},n\in\mathbb{N}$, which implies that the family of fiber spaces $(I_{\gamma})_{\gamma\in\varGamma_{n},n\in\mathbb{N}}$ of $P_{d}(X_{n})$ has equi-Property $A$. It follows from \cite{SW13} that the canonical homomorphism
\[\lambda:A^{*}_{\max,\infty}((I_{\gamma};\gamma\in\varGamma_{n})_{n\in\mathbb{N}})\rightarrow A^{*}_{\infty}((I_{\gamma};\gamma\in\varGamma_{n})_{n\in\mathbb{N}})\]
is an isomorphism.

\begin{definition}\label{def. 6.14}
Define 
$A_{L,\infty}[(I_{\gamma};\gamma\in\varGamma_{n})_{n\in\mathbb{N}}]$ to be the $\ast$-subalgebra of 
all bounded and uniformly norm-continuous functions
\[f:\mathbb{R}_{+}\rightarrow A_{\infty}[(I_{\gamma};\gamma\in\varGamma_{n})_{n\in\mathbb{N}}],\]
where $f(t)$ is of the form $f(t)=[({f}^{(0)}(t),\dots,{f}^{(n)}(t),\dots)]$ for all $t\in\mathbb{R}_{+}$, and
\[{f}^{(n)}(t)=\bigoplus_{\gamma\in\varGamma_{n}}{f}^{(n)}_{\gamma}(t)\]
such that the family of functions 
 $({f}^{(n)}_{\gamma}(t))_{\gamma\in\varGamma_{n},n\in\mathbb{N}, t\geqslant 0}$ satisfies the conditions in Definition \ref{def. 5.5}, and there exists a bounded function $R(t):\mathbb{R}_{+}\rightarrow\mathbb{R}_{+}$ with $\lim_{t\rightarrow\infty} R(t)=0$ such that
$(({f}^{(n)}_{\gamma}(t))(x,x')=0$ whenever $d(x,x')>R(t)$
for all $x,x'\in Z_{d,n}\cap I_{\gamma}, d\geqslant 0,\gamma\in\varGamma_{n}, n\in\mathbb{N}$ and $ t\in\mathbb{R}_{+}$.\end{definition}
\begin{definition}\label{def. 6.15}
Define $A^{*}_{L,\infty}((I_{\gamma};\gamma\in\varGamma_{n})_{n\in\mathbb{N}})$ to be the completion of 
$A_{L,\infty}[(I_{\gamma};\gamma\in\varGamma_{n})_{n\in\mathbb{N}}]$ with respect to the norm
\[\|f\|_{\max}:=\sup_{t\in\mathbb{R}_{+}}\|f(t)\|_{\max}.\]
(We remark that $f(t)$ is defined with the same norm closure in $C^{*}$-algebras  $A^{*}_{\max,\infty}((I_{\gamma};\gamma\in\varGamma_{n})_{n\in\mathbb{N}})$ and $A^{*}_{\infty}((I_{\gamma};\gamma\in\varGamma_{n})_{n\in\mathbb{N}})$, 
so we no longer distinguish the notation, but the uniform notation $\|f(t)\|_{\max}$. )\end{definition}

Recall that $I_{\gamma}$ is a \emph{cylinder}, i.e.
\[I_{\gamma}=B_{P_{d}(X_{n})}(p^{-1}_{n}(\gamma),S):=\{x\in P_{d}(X_{n})\mid d(x,p^{-1}_{n}(\gamma))\leqslant S\}\] for some common $S\geqslant 0$ for all $\gamma\in\varGamma_{n}, n\in\mathbb{N}$, which provides a specific characterization of the algebraic structure of the $C^{*}$-algebras $C^{*}_{u,  \phi,\infty}((P_{d}(X_{n}),\mathcal{A}(V_{n}))_{n\in\mathbb{N}})_{O}$ and $C^{*}_{u, L, \phi,\infty}((P_{d}(X_{n}),\mathcal{A}(V_{n}))_{n\in\mathbb{N}})_{O}$ as follows.

\begin{proposition}\label{prop. 6.16}
Suppose that $O:=(O_{n,y})_{y\in Y_{n},n\in\mathbb{N}}$ is a coherent system of open subsets of $\mathbb{R}_{+}\times V_{n}$, $n\in\mathbb{N}$, which is  $(\varGamma,r)$-separate for some
$\varGamma:=(\varGamma_{n})_{n\in\mathbb{N}}$ and $r>0$. Then
\begin{enumerate}
   \item [(1)] \hspace{0pt}$C^{*}_{u,  \phi,\infty}((P_{d}(X_{n}),\mathcal{A}(V_{n}))_{n\in\mathbb{N}})_{O}\cong \lim_{s\rightarrow\infty}A^{*}_{\infty}((B_{P_{d}(X_{n})}(p^{-1}_{n}(\gamma),S);\gamma\in\varGamma_{n})_{n\in\mathbb{N}})
;$ 
\item [(2)] \hspace{0pt}$C^{*}_{u, L, \phi,\infty}((P_{d}(X_{n}),\mathcal{A}(V_{n}))_{n\in\mathbb{N}})_{O}\cong \lim_{s\rightarrow\infty}A^{*}_{L,\infty}((B_{P_{d}(X_{n})}(p^{-1}_{n}(\gamma),S);\gamma\in\varGamma_{n})_{n\in\mathbb{N}}).$
\end{enumerate}
\end{proposition}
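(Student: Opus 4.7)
The plan is to build, for each $S>0$, mutually compatible $*$-homomorphisms
\[\Psi_{S}: A^{*}_{\infty}((B_{P_{d}(X_{n})}(p_{n}^{-1}(\gamma),S);\gamma\in\varGamma_{n})_{n\in\mathbb{N}}) \longrightarrow C^{*}_{u,\phi,\infty}((P_{d}(X_{n}),\mathcal{A}(V_{n}))_{n\in\mathbb{N}})_{O}\]
that become isomorphisms after passing to the limit $S\to\infty$, and then to prove part (2) by applying part (1) pointwise in the time parameter. At the algebraic level, $\Psi_{S}$ sends an element $[(T^{(n)})_{n}]$ with $T^{(n)}=\bigoplus_{\gamma\in\varGamma_{n}}T^{(n)}_{\gamma}$, where $T^{(n)}_{\gamma}\in \mathbb{C}[B_{P_{d}(X_{n})}(p_{n}^{-1}(\gamma),S)]\hat{\otimes}\mathcal{A}(O_{n,\gamma,\gamma})$, to the operator whose $(x,x')$-matrix entry is $\sum_{\gamma\in\varGamma_{n}} T^{(n)}_{\gamma}(x,x')$, with each $T^{(n)}_{\gamma}$ viewed on the whole of $Z_{d,n}$ by zero-extension of matrix entries and through the canonical inclusion $\mathcal{A}(O_{n,\gamma,\gamma})\hookrightarrow \mathcal{A}(V_{n})$. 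Because the witnessing sets $\{O_{n,y,\gamma}\}_{\gamma}$ are pairwise disjoint by Definition \ref{def. 6.7}(2), this sum is a fiberwise direct sum, its $(x,x')$-support sits inside $O_{n,p_{n}(\bar{x})}$, and conditions (1)--(7) of Definition \ref{def. 5.5} are verified directly with the cylinder radius $S$ contributing to the propagation bound.

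The inverse is the decomposition map. Given $T\in\mathbb{C}_{u,\infty}[(P_{d}(X_{n}),\mathcal{A}(V_{n}))_{n\in\mathbb{N}}]_{O}$, one decomposes each matrix entry using the disjoint decomposition $O_{n,p_{n}(\bar{x})}=\bigsqcup_{\gamma}O_{n,p_{n}(\bar{x}),\gamma}$ together with the functional calculus of the centre $C_{0}(\mathbb{R}_{+}\times V_{n})\subseteq \mathcal{A}(V_{n})$, writing $T^{(n)}(x,x')=\bigoplus_{\gamma}T^{(n)}_{\gamma}(x,x')$ according to which piece contains the support. The main geometric step is to show that, for a fixed $\gamma$, only pairs $(x,x')$ with $x,x'\in B_{P_{d}(X_{n})}(p_{n}^{-1}(\gamma),S)$ for a uniform $S$ can contribute. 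This is obtained by combining: the propagation bound $R$ of $T$, the support-radius condition (5) of Definition \ref{def. 5.5}, the inclusion $O_{n,y,\gamma}\subseteq B(t_{y}(\gamma)(s(\gamma)),r)$ from Definition \ref{def. 6.7}(3), and the distortion bound $\rho_{1}(d(z,z'))\leqslant \|t_{y}(z)(s(z))-t_{y}(z')(s(z'))\|\leqslant \rho_{2}(d(z,z'))$ of the fibred coarse embedding; together these force $d_{Y_{n}}(p_{n}(x),\gamma)$ to be bounded by a quantity depending only on the parameters of $T$. The bornology of $p_{n}$ from Definition \ref{def. 2.4}(1) then converts this into the cylinder radius $S$ in $P_{d}(X_{n})$.

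It remains to verify that $\Psi_{S}$ is a $*$-homomorphism and that the two constructions are mutually inverse. Multiplicativity rests on disjointness: in the product formula
\[\bigl(\Psi_{S}(T)\Psi_{S}(T')\bigr)^{(n)}(x,x')=\sum_{z\in Z_{d,n}}\Psi_{S}(T)^{(n)}(x,z)\cdot (t_{p_{n}(x)p_{n}(z)})_{*}\bigl(\Psi_{S}(T')^{(n)}(z,x')\bigr),\]
cross-terms indexed by distinct $\gamma,\gamma'$ vanish because the pushforwards $(t_{p_{n}(x)p_{n}(z)})_{*}$ preserve the separated decomposition by the coherence condition (1) of Definition \ref{def. 6.2}, while on each diagonal index $\gamma$ the product reduces to the usual product in $\mathbb{C}[B_{P_{d}(X_{n})}(p_{n}^{-1}(\gamma),S)]\hat{\otimes}\mathcal{A}(O_{n,\gamma,\gamma})$; the involution is symmetric. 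Compatibility with both the reduced norm on $A^{*}_{\infty}$ and the norm inherited from $C^{*}_{u,\max,\infty}$ follows because $\Psi_{S}$ is implemented by an inclusion of orthogonal $C^{*}$-subalgebras, so it extends to an isometric $*$-homomorphism on the completions. Taking the direct limit as $S\to\infty$ and noting that every finite-propagation element of $C^{*}_{u,\phi,\infty}((P_{d}(X_{n}),\mathcal{A}(V_{n}))_{n\in\mathbb{N}})_{O}$ is reached by some $S$ gives part (1). For part (2) one applies (1) pointwise: an element of $\mathbb{C}_{u,L,\infty}[(P_{d}(X_{n}),\mathcal{A}(V_{n}))_{n\in\mathbb{N}}]_{O}$ is a uniformly continuous family $\{f(t)\}_{t\in\mathbb{R}_{+}}$ with a uniform propagation profile $R(t)\to 0$, and since $\Psi_{S}$ commutes with evaluation at $t$ and preserves the propagation profile, the isomorphism in part (2) follows at once. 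The hard part of the argument is the geometric uniformity step in the middle paragraph; once the cylinder radius $S$ is controlled, the remaining verifications are routine.
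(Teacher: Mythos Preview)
Your overall architecture---decompose along the disjoint pieces $O_{n,y,\gamma}$, use the support condition (5), the $(\varGamma,r)$-separateness and the fibred-coarse-embedding distortion to localize to a uniform cylinder, and then pass to the limit---matches the paper's strategy. However, there are two genuine gaps.

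\textbf{First, the trivialization change is missing.} You send $T^{(n)}_{\gamma}\in\mathcal{A}(O_{n,\gamma,\gamma})\hat\otimes\mathcal{K}$ into the twisted Roe algebra via the ``canonical inclusion $\mathcal{A}(O_{n,\gamma,\gamma})\hookrightarrow\mathcal{A}(V_{n})$''. But the matrix entry $T^{(n)}(x,x')$ in $\mathbb{C}_{u,\infty}[(P_{d}(X_{n}),\mathcal{A}(V_{n}))_{n\in\mathbb{N}}]_{O}$ must have support in $O_{n,p_{n}(\bar{x})}$ (Definition~\ref{def. 6.4}) and must lift through $W_{k}(p_{n}(x))$ (condition (6) of Definition~\ref{def. 5.5}); both conditions are expressed in the $p_{n}(x)$-trivialization, not the $\gamma$-trivialization. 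The paper's map therefore applies the affine-isometry adjustment $(t_{\gamma\,p_{n}(x)})_{*}$ (or its inverse) when passing between the two sides. Without this, your image need not land in the $O$-ideal, and your multiplicativity argument---which relies on the coherence condition (1) of Definition~\ref{def. 6.2}---does not go through as written, because that condition compares $O_{n,y}$ and $O_{n,y'}$ only after applying $t_{yy'}$.

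\textbf{Second, and more seriously, the norm identification is unjustified.} You assert that $\Psi_{S}$ ``is implemented by an inclusion of orthogonal $C^{*}$-subalgebras, so it extends to an isometric $*$-homomorphism on the completions''. But the two sides carry \emph{different} norms: $A^{*}_{\infty}$ is completed in the reduced norm (Definition~\ref{def. 6.13}), while $C^{*}_{u,\phi,\infty}(\cdots)_{O}$ carries the norm inherited from the \emph{maximal} twisted Roe algebra. Nothing in your argument explains why these agree under $\Psi_{S}$. The paper closes this by first showing that the algebraic correspondence extends simultaneously to the maximal and to the reduced completions of $A_{\infty}[\cdots]$, and then invoking the equi-Property~A of the fibers $\{p_{n}^{-1}(\gamma)\}$ (Definition~\ref{def. 2.4}(3)) together with \v{S}pakula--Willett \cite{SW13} to conclude $A^{*}_{\max,\infty}\cong A^{*}_{\infty}$. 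Since the $\phi$-norm is intermediate between max and reduced, the isomorphism with $C^{*}_{u,\phi,\infty}(\cdots)_{O}$ follows. This Property~A step is the heart of why part (1) holds for the $\phi$-completion specifically, and it is absent from your proposal.
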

\begin{proof}[\rm {\textbf{Proof.}}]
Let
$T=[(T^{(0)},\dots,T^{(n)},\dots)]$ be an arbitrary element in $\mathbb{C}_{u,L,\infty}
[(P_{d}(X_{n}),\mathcal{A}(V_{n}))_{n\in\mathbb{N}}]_{O}.$ Since the coherent system $O:=(O_{n,y})_{y\in Y_{n},n\in\mathbb{N}}$ is $(\varGamma,r)$-separate and the map $p_{n}$ for each $n\in\mathbb{N}$ is surjective, there exist a sequence of open subsets 
$(O_{n,p_{n}(x),\gamma})_{\gamma\in \varGamma_{n}\cap B_{Y_{n}}(p_{n}(x), l_{n})}$ of $\mathbb{R}_{+}\times V_{n}$ for all $y\in Y_{n},n\in\mathbb{N}$ such that
\[\text{Supp}(T^{(n)}(x,x'))\subseteq O_{n,p_{n}(x)}=\bigsqcup_{\gamma\in \varGamma_{n}\cap B_{Y_{n}}(p_{n}(x),l_{n})}O_{n,p_{n}(x),\gamma}\]
for all $x,x'\in Z_{d,n},n\in\mathbb{N}$, and each open subset $O_{n,p_{n}(x),\gamma}$ is contained in $B_{\mathbb{R}_{+}\times V_{n}}(t_{p_{n}(x)}(\gamma)(s(\gamma)),r)$ for some $r>0$.
Then we have the decomposition
\[T^{(n)}(x,x')=\bigoplus_{\gamma\in \varGamma_{n}\cap B_{Y_{n}}(p_{n}(x),l_{n})}T_{\gamma}^{(n)}(x,x'),
\]
where 
\[T_{\gamma}^{(n)}(x,x')=T^{(n)}(x,x')|_{O_{n,p_{n}(x),\gamma}}
\in \mathcal{A}(O_{n,p_{n}(x),\gamma})\hat\otimes\mathcal{K}.\]
On the other hand, it follows from the support condition (5) in Definition \ref{def. 5.5} that there exists another $\tilde{r}>0$ such that
$\text{Supp}(T^{(n)}(x,x'))\subseteq B_{\mathbb{R}_{+}\times V_{n}}(t_{p_{n}(x)}(p_{n}(x))(s(p_{n}(x))),\tilde{r}).$
Therefore,
\[T_{\gamma}^{(n)}(x,x')=0\quad\text{whenever}\quad
d(t_{p_{n}(x)}(\gamma)(s(\gamma)),t_{p_{n}(x)}(p_{n}(x))(s(p_{n}(x))))>r+\tilde{r}\] for each $\gamma\in \varGamma_{n}\cap B_{Y_{n}}(p_{n}(x),l_{n})$.
Then there exist $S,\tilde{S}>0$ such that
\[d(t_{p_{n}(x)}(\gamma)(s(\gamma)),t_{p_{n}(x)}(p_{n}(x))(s(p_{n}(x))))\leqslant\bar{r}+r\Rightarrow 
d(p_{n}(x),\gamma)\leqslant\tilde{S}\Rightarrow d(x,p^{-1}_{n}(\gamma))\leqslant S,\]
where the first derivation follows because 
the section $s$ is locally a coarse embedding and the second is because the map $p_{n}$ is uniformly expansive for each $n\in\mathbb{N}$. Consequently, 
\[T_{\gamma}^{(n)}(x,x')=0\quad\text{ whenever}\quad d(x,p^{-1}_{n}(\gamma))\leqslant S\] for all $x,x'\in Z_{d,n}, n\in\mathbb{N}$ and $\gamma\in \varGamma_{n}\cap B_{Y_{n}}(p_{n}(x),l_{n})$.



Now, we take $N_{R}\in\mathbb{N}$ large enough depending only on the finite propagation $R$ of $T$, and define a equivalent class $U=[(U^{(0)},\dots,U^{(n)},\dots)]$ of sequence $(U^{(0)},\dots,U^{(n)},\dots)$ satisfying 
\begin{equation*}
	U^{(n)}=\left\{
	\begin{array}{ccc}
	&0  & \text{if}\ n<N_{R};\\
	&\oplus_{\gamma\in\varGamma_{n}}{U^{(n)}_{\gamma}}&\text{if}\ n\geqslant N_{R},
	\end{array}\right.
\end{equation*}
 and 
\[U^{(n)}_{\gamma}(x,x')=(t_{\gamma\ p_{n}(x)})_{*}T_{\gamma}^{(n)}(x,x')\in \mathcal{A}(O_{n,\gamma,\gamma})\hat\otimes\mathcal{K}\]for any $x,x'\in Z_{d,n}\cap B_{P_{d}(X_{n})}(p^{-1}_{n}(\gamma),S)$ with
$\gamma\in \varGamma_{n}\cap B_{Y_{n}}(p_{n}(x),l_{n}),n\in\mathbb{N}$.
It is clear that $U^{(n)}_{\gamma}$
 is well-defined, and we have that
 \begin{align*}
    U^{(n)}_{\gamma}&=[(U^{(n,0)}_{\gamma},U^{(n,1)}_{\gamma},\dots,U^{(n,n)}_{\gamma},\dots)]\\
&\in\oplus_{\gamma\in\varGamma_{n}}\mathbb{C}[B_{P_{d}(X_{n})}(p^{-1}_{n}(\gamma),S)]\hat\otimes\mathcal{A}(O_{n,\gamma,\gamma})\\
    &\subseteq \oplus_{\gamma\in\varGamma_{n}}C^{*}_{\max}(B_{P_{d}(X_{n})}(p^{-1}_{n}(\gamma),S))\hat\otimes\mathcal{A}(O_{n,\gamma,\gamma}).
 \end{align*} 
Hence, 
\[U\in A_{\infty}[(B_{P_{d}(X_{n})}(p^{-1}_{n}(\gamma),S);\gamma\in \varGamma_{n})_{n\in\mathbb{N}}],\]
which induces the following correspondence 
\begin{equation}\label{equ. 4}
\begin{split}
	\mathbb{C}_{u,\infty}
	[(P_{d}(X_{n}),\mathcal{A}(V_{n}))_{n\in\mathbb{N}}]_{O}&\rightarrow A_{\infty}[(B_{P_{d}(X_{n})}(p^{-1}_{n}(\gamma),S);\gamma\in \varGamma_{n})_{n\in\mathbb{N}}]\\
	T\quad\quad \quad\quad &\mapsto\quad\quad \quad\quad U
\end{split}\end{equation}

If we take the completion of $\mathbb{C}_{u,\infty}
	[(P_{d}(X_{n}),\mathcal{A}(V_{n}))_{n\in\mathbb{N}}]_{O}$ 
, then the map \eqref{equ. 4} can be extended to the following two $C^{*}$-isomorphisms
$$C^{*}_{u, \max,\infty}((P_{d}(X_{n}),\mathcal{A}(V_{n}))_{n\in\mathbb{N}})_{O}
\to  A^{*}_{\max,\infty}((B_{P_{d}(X_{n})}(p^{-1}_{n}(\gamma),S);\gamma\in \varGamma_{n})_{n\in\mathbb{N}})$$
and
$$C^{*}_{u,\infty}((P_{d}(X_{n}),\mathcal{A}(V_{n}))_{n\in\mathbb{N}})_{O}
\to  A^{*}_{\infty}((B_{P_{d}(X_{n})}(p^{-1}_{n}(\gamma),S);\gamma\in \varGamma_{n})_{n\in\mathbb{N}}).$$
It follows from Proposition 1.3 in \cite{SW13} that the $C^{*}$-algebra $ A^{*}_{\max,\infty}((B_{P_{d}(X_{n})}(p^{-1}_{n}(\gamma),S);\gamma\in \varGamma_{n})_{n\in\mathbb{N}})$ is isomorphic to $C^{*}$-algebra $ A^{*}_{\infty}((B_{P_{d}(X_{n})}(p^{-1}_{n}(\gamma),S);\gamma\in \varGamma_{n})_{n\in\mathbb{N}})$ since the collection $(B_{P_{d}(X_{n})}(p^{-1}_{n}(\gamma),S);\gamma\in \varGamma_{n})_{n\in\mathbb{N}}$ is uniformly coarsely equivalent to $\{p^{-1}_{n}(\gamma)\}_{\gamma\in \varGamma_{n},n\in\mathbb{N}}$ and the sequence $\{p^{-1}_{n}(\gamma)\}_{\gamma\in \varGamma_{n},n\in\mathbb{N}}$ has equi-Property A by Definition \ref{def. 2.4}. We finally obtain that $C^{*}_{u, \max,\infty}((P_{d}(X_{n}),\mathcal{A}(V_{n}))_{n\in\mathbb{N}})_{O}$ is isomorphic to $C^{*}_{u,\infty}((P_{d}(X_{n}),\mathcal{A}(V_{n}))_{n\in\mathbb{N}})_{O}$. Since $\|\cdot\|_{\phi}$ is an intermediate norm between the maximal norm $\|\cdot\|_{\max}$ and the reduced norm $\|\cdot\|$. As a result, the map \eqref{equ. 4} extends to an $\ast$-isomorphism from $C^{*}$-algebra
$C^{*}_{u, \phi,\infty}((P_{d}(X_{n}),\mathcal{A}(V_{n}))_{n\in\mathbb{N}})_{O}$
to $C^{*}$-algebra $ A^{*}_{\infty}((B_{P_{d}(X_{n})}(p^{-1}_{n}(\gamma),S);\gamma\in \varGamma_{n})_{n\in\mathbb{N}})$.\end{proof}

\begin{proof}[\rm\textbf{Proof of Lemma \ref{lem. 6.8}}]
It follows directly from Proposition \ref{prop. 6.16}.
\end{proof}

Let $e$ be the evaluation homomorphism from 
$A^{*}_{L,\infty}((I_{\gamma};\gamma\in\varGamma_{n})_{n\in\mathbb{N}})$ to $A^{*}_{\infty}((I_{\gamma};\gamma\in\varGamma_{n})_{n\in\mathbb{N}})$ given by $e(f)=f(0)$. Then we have the following result.

\begin{proposition}\label{prop. 6.17}
 For any $d\geqslant 0$. The map
\[e_{*}:\lim_{S\rightarrow\infty}K_{*}(A^{*}_{L,\infty}(
(B_{P_{d}(X_{n})}(p^{-1}_{n}(\gamma),S):\gamma\in\varGamma_{n})_{n\in\mathbb{N}}))\rightarrow \lim_{S\rightarrow\infty}K_{*}(A^{*}_{\infty}(
(B_{P_{d}(X_{n})}(p^{-1}_{n}(\gamma),S):\gamma\in\varGamma_{n})_{n\in\mathbb{N}}))\]
induced by the evaluation-at-zero map on $K$-theory is an isomorphism.
\end{proposition}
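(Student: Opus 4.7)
By Proposition \ref{prop. 6.16}, after passing to the direct limit $S\to\infty$, the proposition reduces to proving that $e_{*}$ is an isomorphism on $K$-theory between the systems
$A^{*}_{L,\infty}((B_{P_{d}(X_{n})}(p^{-1}_{n}(\gamma),S);\gamma\in\varGamma_{n})_{n\in\mathbb{N}})$ and
$A^{*}_{\infty}((B_{P_{d}(X_{n})}(p^{-1}_{n}(\gamma),S);\gamma\in\varGamma_{n})_{n\in\mathbb{N}})$.
The strategy is to imitate the classical proof of Yu \cite{Yu97,Yu00} that the evaluation map $K_{*}(C^{*}_{L}(X))\to K_{*}(C^{*}(X))$ is an isomorphism whenever $X$ has Property A, now carried out \emph{uniformly} in the parameters $(n,\gamma)$, and with the inert $C^{*}$-algebra coefficient $\mathcal{A}(O_{n,\gamma,\gamma})$ playing no essential role beyond being tensored through.

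\textbf{Step 1 (Equi-Property A for the cylinders).} Condition (3) of Definition \ref{def. 2.4} gives equi-Property A for the fiber family $\{p^{-1}_{n}(\gamma)\}_{\gamma\in Y_{n},n\in\mathbb{N}}$, and condition (4) says that the enlarged family $\{B_{P_{d}(X_{n})}(p^{-1}_{n}(\gamma),S)\}$ is uniformly coarsely equivalent to $\{p^{-1}_{n}(\gamma)\}$. Since equi-Property A is invariant under uniform coarse equivalence, for each $R,\epsilon>0$ I obtain a single constant $S_{0}$ and a family of maps $\xi^{(n,\gamma)}:I_{\gamma}\to\ell^{2}(I_{\gamma})$ witnessing Property A for every cylinder $I_{\gamma}$ simultaneously, with the same $R,\epsilon,S_{0}$.

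\textbf{Step 2 (Construction of a K-theoretic inverse).} Using the uniform family $\xi^{(n,\gamma)}$, I construct asymptotic morphisms in the style of Yu that implement a map
\[
\sigma_{*}:K_{*}(A^{*}_{\infty}((I_{\gamma};\gamma\in\varGamma_{n})_{n\in\mathbb{N}}))\to K_{*}(A^{*}_{L,\infty}((I_{\gamma};\gamma\in\varGamma_{n})_{n\in\mathbb{N}})),
\]
fiberwise of the form $T\mapsto[t\mapsto V_{t}^{(n,\gamma)}TV_{t}^{(n,\gamma)*}]$ with $V_{t}^{(n,\gamma)}$ built from the $\xi^{(n,\gamma)}$. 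The uniformity of the Property A data guarantees that propagation of $V_{t}^{(n,\gamma)}TV_{t}^{(n,\gamma)*}$ goes to zero as $t\to\infty$ \emph{uniformly in $(n,\gamma)$}, which is exactly what is required for the output to lie in $A_{L,\infty}[(I_{\gamma})]$ (Definition \ref{def. 6.14}). The tensor factor $\mathcal{A}(O_{n,\gamma,\gamma})$ is carried along unchanged.

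\textbf{Step 3 (Verification and limit).} I then show $e_{*}\circ\sigma_{*}=\mathrm{id}$ and $\sigma_{*}\circ e_{*}=\mathrm{id}$ by explicit uniform homotopies inside $A^{*}_{L,\infty}$, of the same flavor as in \cite{Yu97} but carried out in parallel over $(n,\gamma)$. Finally, since the $\xi^{(n,\gamma)}$ and the resulting $\sigma$ are natural in $S$ (inclusions of smaller cylinders into larger ones intertwine everything), the conclusion passes to the direct limit $S\to\infty$, yielding the claimed isomorphism.

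\textbf{Main obstacle.} The hard part is not the fiberwise isomorphism, which is classical, but the descent to the $\prod/\bigoplus$ quotient that defines the ``at infinity'' algebra. Pointwise Property A on each $p^{-1}_{n}(\gamma)$ would only produce fiberwise $K$-theoretic inverses that need not assemble into bounded families with uniformly decaying propagation, and hence would fail to define an element of $A_{L,\infty}[(I_{\gamma})]$. It is precisely the combination of \emph{equi-}Property A of the fibers together with the uniform coarse equivalence $I_{\gamma}\simeq p^{-1}_{n}(\gamma)$ that produces uniform Property A data, which in turn yields a \emph{single} asymptotic morphism $\sigma$ respecting the quotient structure.
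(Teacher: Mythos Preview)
Your overall strategy matches the paper's: use conditions (3) and (4) of Definition \ref{def. 2.4} to pass from the cylinders $B_{P_{d}(X_{n})}(p^{-1}_{n}(\gamma),S)$ to the fibers $p^{-1}_{n}(\gamma)$ via uniform coarse equivalence, and then invoke the uniform version of Yu's theorem for families with equi-Property~A. The paper's proof does exactly this in two sentences, citing \cite{Yu00} as a black box. Your identification of the ``main obstacle'' (that one needs \emph{equi-}Property~A, not just fiberwise Property~A, so that the constructions descend to the $\prod/\bigoplus$ quotient) is the correct key point.

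However, the specific mechanism you propose in Step~2 does not work. Conjugation $T\mapsto V_{t}^{(n,\gamma)}TV_{t}^{(n,\gamma)*}$ by isometries built from Property~A functions $\xi^{(n,\gamma)}$ does \emph{not} make propagation tend to zero: if $\mathrm{supp}(\xi^{(n,\gamma)}_{x})\subset B(x,S_{0})$ then any such $V_{t}$ has propagation at most $S_{0}$, so $V_{t}TV_{t}^{*}$ has propagation at most $\mathrm{propagation}(T)+2S_{0}$, which is bounded away from zero. Property~A data compress supports to a \emph{uniformly bounded} scale, not to an arbitrarily small one. This is precisely why Yu's proof in \cite{Yu00} does not proceed by constructing a direct section of $e$, but instead passes through coarse embeddability into Hilbert space and the Dirac--dual-Dirac method with twisted Roe algebras. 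So your plan is right but the construction of $\sigma$ is not; the honest move at this stage is to cite the equi-version of Yu's theorem, exactly as the paper does, rather than attempt a direct homotopy inverse.
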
 
\begin{proof}[\rm \textbf{{Proof.}}] Fix $S > 0$. From Definition \ref{def. 2.4} we have that
the collection $\{B_{P_{d}(X_{n})}(p^{-1}_{n}(\gamma),S)\}_{\gamma\in\varGamma_{n},n\in\mathbb{N}}$
is uniformly coarsely equivalent to the sequence of Rips complexes of fibers $\{P_{d}(p^{-1}_{n}(\gamma))\}_{\gamma\in\varGamma_{n},n\in\mathbb{N}}$ with scale $d\geqslant0$. Since the coarse assembly map is
coarsely invariant (see \cite{WY20}), it suffices to show
\[e_{*}:\lim_{S\rightarrow\infty}K_{*}(A^{*}_{L,\infty}(
(P_{d}(p^{-1}_{n}(\gamma)):\gamma\in\varGamma_{n})_{n\in\mathbb{N}}))\rightarrow \lim_{S\rightarrow\infty}K_{*}(A^{*}_{\infty}(
(P_{d}(p^{-1}_{n}(\gamma)):\gamma\in\varGamma_{n})_{n\in\mathbb{N}}))\] is an isomorphism, which follows from the fact that fiber spaces $\{p_{n}^{-1}(y)\}_{y\in Y_{n},n\in\mathbb{N}}$ has equi-Property A and Yu's celebrated theorem \cite{Yu00}. This finishes the proof. \end{proof}
\begin{proof}[\rm\textbf{Proof of Theorem \ref{the. 6.12}}]
By Definition \ref{def. 6.6}, \ref{def. 6.10} and Proposition \ref{prop. 6.16}, 
 \ref{prop. 6.17}, we have the following commutative diagram
\begin{center}
	\begin{tikzcd}
		\lim\limits_{d\rightarrow\infty}C^{*}_{u, L, \max,\infty}((P_{d}(X_{n}),\mathcal{A}(V_{n}))_{n\in\mathbb{N}})_{O} \arrow[d,"{\lambda_{L,\max,\phi}} "] \arrow[r,"e_{*}"] & \lim\limits_{d\rightarrow\infty}C^{*}_{u,  \max,\infty}((P_{d}(X_{n}),\mathcal{A}(V_{n}))_{n\in\mathbb{N}})_{O}\arrow[d,"{\lambda_{\max,\phi}}"] \\
		\lim\limits_{d\rightarrow\infty}C^{*}_{u, L, \phi,\infty}((P_{d}(X_{n}),\mathcal{A}(V_{n}))_{n\in\mathbb{N}})_{O} \arrow[d,"\cong"] \arrow[r,"e_{*}"] & \lim\limits_{d\rightarrow\infty}C^{*}_{u,  \phi,\infty}((P_{d}(X_{n}),\mathcal{A}(V_{n}))_{n\in\mathbb{N}})_{O}\arrow[d,"\cong"] \\
	\lim\limits_{d\rightarrow\infty}\lim\limits_{S\rightarrow\infty}A^{*}_{L,\infty}((B_{P_{d}(X_{n})}(p^{-1}_{n}(\gamma),S);\gamma\in\varGamma_{n})_{n\in\mathbb{N}})\arrow[r]& \lim\limits_{d\rightarrow\infty}\lim\limits_{S\rightarrow\infty}A^{*}_{\infty}((B_{P_{d}(X_{n})}(p^{-1}_{n}(\gamma),S);\gamma\in\varGamma_{n})_{n\in\mathbb{N}})
	\end{tikzcd}
\end{center}
which induces the following commutative diagram on $K$-theory
\begin{center}
	\begin{tikzcd}
	\lim\limits_{d\rightarrow\infty}K_{*}(C^{*}_{u, L, \max,\infty}((P_{d}(X_{n}),\mathcal{A}(V_{n}))_{n\in\mathbb{N}})_{O}) \arrow[d,"{(\lambda_{L,\max,\phi})}_{*} "] \arrow[r,"e_{*}"] & \lim\limits_{d\rightarrow\infty}K_{*}(C^{*}_{u,  \max,\infty}((P_{d}(X_{n}),\mathcal{A}(V_{n}))_{n\in\mathbb{N}})_{O})\arrow[d,"{(\lambda_{\max,\phi})}_{*}"] \\
	\lim\limits_{d\rightarrow\infty}K_{*}(C^{*}_{u, L, \phi,\infty}((P_{d}(X_{n}),\mathcal{A}(V_{n}))_{n\in\mathbb{N}})_{O}) \arrow[d,"\cong"] \arrow[r,"e_{*}"] & \lim\limits_{d\rightarrow\infty}K_{*}(C^{*}_{u,  \phi,\infty}((P_{d}(X_{n}),\mathcal{A}(V_{n}))_{n\in\mathbb{N}})_{O})\arrow[d,"\cong"] \\
	\lim\limits_{d\rightarrow\infty}\lim\limits_{S\rightarrow\infty}K_{*}(A^{*}_{L,\infty}((B_{P_{d}(X_{n})}(p^{-1}_{n}(\gamma),S);\gamma\in\varGamma_{n})_{n\in\mathbb{N}}))\arrow[r,"\cong"]& \lim\limits_{d\rightarrow\infty}\lim\limits_{S\rightarrow\infty}K_{*}(A^{*}_{\infty}((B_{P_{d}(X_{n})}(p^{-1}_{n}(\gamma),S);\gamma\in\varGamma_{n})_{n\in\mathbb{N}}))
\end{tikzcd}
\end{center}
\normalsize
Then Theorem \ref{the. 6.12} immediately follows from Lemma \ref{lem. 6.8}, \ref{lem. 6.11} and Proposition \ref{prop. 6.17}.
\end{proof}

\subsection{Gluing together the pieces}
In this subsection,  we glue together the smaller ideals to prove, our main result of this section, Theorem \ref{the. 6.1}.
\begin{proof}[\rm\textbf{Proof of Theorem 6.1.}] Let $r > 0$. Define
\[O^{(r)}_{n,y}:=\bigcup_{\gamma\in B_{Y_{n}}(y,{l_{n}})}B_{\mathbb{R}_{+}\times V_{n}}(t_{y}(\gamma)(s(\gamma)),r),\]
for all $y\in Y_{n},n\in\mathbb{N}$. Then $O^{(r)}:=(O^{(r)}_{n,y})_{y\in Y_{n},n\in\mathbb{N}}$
is a coherent system of open subsets of $\mathbb{R}_{+}\times V_{n}$.
For any $d\geqslant 0$, if $r<r'$ then $O^{(r)}_{n,y}\subseteq O^{(r')}_{n,y}$ so that 

\begin{align*}
	C^{*}_{u,  \max,\infty}((P_{d}(X_{n}),\mathcal{A}(V_{n}))_{n\in\mathbb{N}})_{O^{(r)}}&\subseteq C^{*}_{u,  \max,\infty}((P_{d}(X_{n}),\mathcal{A}(V_{n}))_{n\in\mathbb{N}})_{O^{(r')}},
	\\C^{*}_{u, L, \max,\infty}((P_{d}(X_{n}),\mathcal{A}(V_{n}))_{n\in\mathbb{N}})_{O^{(r)}}&\subseteq C^{*}_{u, L, \max,\infty}((P_{d}(X_{n}),\mathcal{A}(V_{n}))_{n\in\mathbb{N}})_{O^{(r')}}.
\end{align*}
By definitions, the maximal twisted algebra at infinity and its localization counterpart are the directed limits of the corresponding ideals, i.e.
\begin{align*}
	C^{*}_{u,  \max,\infty}((P_{d}(X_{n}),\mathcal{A}(V_{n}))_{n\in\mathbb{N}})&=\lim_{r\rightarrow\infty} C^{*}_{u,  \max,\infty}((P_{d}(X_{n}),\mathcal{A}(V_{n}))_{n\in\mathbb{N}})_{O^{(r)}},
	\\C^{*}_{u, L, \max,\infty}((P_{d}(X_{n}),\mathcal{A}(V_{n}))_{n\in\mathbb{N}})&=\lim_{r\rightarrow\infty}C^{*}_{u, L, \max,\infty}((P_{d}(X_{n}),\mathcal{A}(V_{n}))_{n\in\mathbb{N}})_{O^{(r)}}.
\end{align*}
So, it suffices to show that for any $r_{0}>0$ the following map
	\[e_{*}:\lim_{d\rightarrow\infty}K_{*}(\lim_{r<r_{0},r\rightarrow r_{0}}C^{*}_{u, L, \max,\infty}((P_{d}(X_{n}),\mathcal{A}(V_{n}))_{n\in\mathbb{N}})_{O^{(r)}}) \rightarrow 
	\lim_{d\rightarrow\infty}K_{*}(\lim_{r<r_{0},r\rightarrow r_{0}}C^{*}_{u,  \max,\infty}((P_{d}(X_{n}),\mathcal{A}(V_{n}))_{n\in\mathbb{N}})_{O^{(r)}})
\]
is an isomorphism.\par

Fix $y_{0}\in Y_{n}$, and take $y_{1}$ ouside the ball $B(y_{0},r_{0})$. Then, choose $y_{2}$ ouside the ball $B(y_{1},r_{0})$ with $y_{2}\neq y_{0}$. This process give rise to a countable sequence $\{y_{0},y_{1},y_{2},\dots\}$, denoted as $\varGamma^{(1)}_{n}$. Since the collection $(Y_{n})_{n\in\mathbb{N}}$ has uniform bounded geometry, there exists $N>0$ such that $\#B(y_{0},r_{0})<N$. It follows that there exists a positive integer $J_{r_{0}}>0$ independent of $n$ such that
\begin{enumerate}
   \item [(1)] $Y_{n}=\bigsqcup_{j=1}^{J_{r_{0}}}\varGamma_{n}^{(j)}$ for $J_{r_{0}}$ subspaces $\varGamma^{(j)}_{n}$ of $Y_{n}$;
   \item [(2)]  for any $y\in Y_{n}$ and any distinct element $\gamma,\gamma'\in\varGamma_{n}^{(j)}\cap 
B_{Y_{n}}(y,{l_{n}})$, \[d(t_{y}(\gamma)(s(\gamma)),t_{y}(\gamma')(s(\gamma')))>2r_{0}.\]
\end{enumerate}

For any $0<r<r_{0}$ and each $j\in\{1,2,\dots,J_{r_{0}}\}$, we define 
\[O^{(r,j)}_{n,y}:=\bigcup_{\gamma\in \varGamma_{n}^{(j)}\cap B_{Y_{n}}(y,{l_{n}})}B_{\mathbb{R}_{+}\times V_{n}}(t_{y}(\gamma)(s(\gamma)),r)
\]
for all $y\in Y_{n},l_{n}\geqslant 0,n\in\mathbb{N}$.  Then the collection
\[O^{(r,j)}:=(O^{(r,j)}_{n,y})_{y\in Y_{n},n\in\mathbb{N}}
\] is a coherent system for each $j\in\{1,2,\dots,J_{r_{0}}\}$ and we have that $O^{(r)}=\cup^{J_{r_{0}}}_{j=1}O^{(r,j)}$. Now, let $\varGamma^{(j)}=(\varGamma^{(j)}_{n})_{n\in\mathbb{N}}$, then the family of open subsets
$O^{(r,j)}$ or $O^{(r,j)}\cap(\cup_{i=1}^{j-1}O^{(r,j)})$ becomes $(\varGamma^{(j)},r)$-separate for each $j\in\{1,2,\dots,J_{r_{0}}\}$. Using the same argument 
in \cite{Yu00}, we have that 
\begin{align*}
&\lim_{r<r_{0},r\rightarrow r_{0}}C^{*}_{u, \max,\infty}((P_{d}(X_{n}),\mathcal{A}(V_{n}))_{n\in\mathbb{N}})_{O^{(r,j)}}+\lim_{r<r_{0},r\rightarrow r_{0}}C^{*}_{u, \max,\infty}((P_{d}(X_{n}),\mathcal{A}(V_{n}))_{n\in\mathbb{N}})_{\cup_{i=1}^{j-1}O^{(r,i)}}\\
&=\lim_{r<r_{0},r\rightarrow r_{0}}C^{*}_{u, \max,\infty}((P_{d}(X_{n}),\mathcal{A}(V_{n}))_{n\in\mathbb{N}})_{\cup_{i=1}^{j}O^{(r,i)}};\\
&\lim_{r<r_{0},r\rightarrow r_{0}}C^{*}_{u, \max,\infty}((P_{d}(X_{n}),\mathcal{A}(V_{n}))_{n\in\mathbb{N}})_{O^{(r,j)}}\cap\lim_{r<r_{0},r\rightarrow r_{0}}C^{*}_{u, \max,\infty}((P_{d}(X_{n}),\mathcal{A}(V_{n}))_{n\in\mathbb{N}})_{\cup_{i=1}^{j-1}O^{(r,i)}}\\
&=\lim_{r<r_{0},r\rightarrow r_{0}}C^{*}_{u, \max,\infty}((P_{d}(X_{n}),\mathcal{A}(V_{n}))_{n\in\mathbb{N}})_{O^{(r,j)}\cap(\cup_{i=1}^{j-1}O^{(r,i)})};\\
&\lim_{r<r_{0},r\rightarrow r_{0}}C^{*}_{u,L, \max,\infty}((P_{d}(X_{n}),\mathcal{A}(V_{n}))_{n\in\mathbb{N}})_{O^{(r,j)}}+\lim_{r<r_{0},r\rightarrow r_{0}}C^{*}_{u, \max,\infty}((P_{d}(X_{n}),\mathcal{A}(V_{n}))_{n\in\mathbb{N}})_{\cup_{i=1}^{j-1}O^{(r,i)}}\\
&=\lim_{r<r_{0},r\rightarrow r_{0}}C^{*}_{u,L, \max,\infty}((P_{d}(X_{n}),\mathcal{A}(V_{n}))_{n\in\mathbb{N}})_{\cup_{i=1}^{j}O^{(r,i)}}\\
&\lim_{r<r_{0},r\rightarrow r_{0}}C^{*}_{u,L, \max,\infty}((P_{d}(X_{n}),\mathcal{A}(V_{n}))_{n\in\mathbb{N}})_{O^{(r,j)}}\cap\lim_{r<r_{0},r\rightarrow r_{0}}C^{*}_{u, \max,\infty}((P_{d}(X_{n}),\mathcal{A}(V_{n}))_{n\in\mathbb{N}})_{\cup_{i=1}^{j-1}O^{(r,i)}}\\
&=\lim_{r<r_{0},r\rightarrow r_{0}}C^{*}_{u,L, \max,\infty}((P_{d}(X_{n}),\mathcal{A}(V_{n}))_{n\in\mathbb{N}})_{O^{(r,j)}\cap(\cup_{i=1}^{j-1}O^{(r,i)})}.
\end{align*}
\normalsize
Together with Lemma 2.4 in \cite{HRY93}, two Mayer-Vietoris sequences are derived. By connecting them with the associated evaluation maps, we have the following commutative diagram on $K$-theory 
\begin{center}
\quad\quad	\xymatrix{
&AL_{0}\ar[dd]\ar[rr]&&BL_{0}\ar[rr]\ar[dd]&&CL_{0}\ar[dd]\ar[ld]\\
CL_{1}\ar[dd]\ar[ru]&&BL_{1}\ar[ll]\ar[dd]&&AL_{1}\ar[dd]\ar[ll]\\&A_{0}\ar[rr]&&B_{0}\ar[rr]&&C_{0}\ar[ld]\\
C_{1}\ar[ru]&&B_{1}\ar[ll]&&A_{1}\ar[ll]}
\end{center}
where, for $\ast=0,1$,
\begin{align*}
		A_{*}&=K_{*}(C^{*}_{u, \max,\infty}((P_{d}(X_{n}),\mathcal{A}(V_{n}))_{n\in\mathbb{N}})_{O^{(r,j)}\cap(\cup_{i=1}^{j-1}O^{(r,i)})})\\
	B_{*}&=K_{*}(C^{*}_{u, \max,\infty}((P_{d}(X_{n}),\mathcal{A}(V_{n}))_{n\in\mathbb{N}})_{O^{(r,j)}})\oplus
	K_{*}(C^{*}_{u, \max,\infty}((P_{d}(X_{n}),\mathcal{A}(V_{n}))_{n\in\mathbb{N}})_{\cup_{i=1}^{j-1}O^{(r,i)}})\\
	C_{*}&=K_{*}(C^{*}_{u,\max,\infty}((P_{d}(X_{n}),\mathcal{A}(V_{n}))_{n\in\mathbb{N}})_{\cup_{i=1}^{j}O^{(r,i)}})\\
	AL_{*}&=K_{*}(C^{*}_{u,L, \max,\infty}((P_{d}(X_{n}),\mathcal{A}(V_{n}))_{n\in\mathbb{N}})_{O^{(r,j)}\cap(\cup_{i=1}^{j-1}O^{(r,i)})})\\
	BL_{*}&=K_{*}(C^{*}_{u,L, \max,\infty}((P_{d}(X_{n}),\mathcal{A}(V_{n}))_{n\in\mathbb{N}})_{O^{(r,j)}})\oplus
	K_{*}(C^{*}_{u,L, \max,\infty}((P_{d}(X_{n}),\mathcal{A}(V_{n}))_{n\in\mathbb{N}})_{\cup_{i=1}^{j-1}O^{(r,i)}})\\
	CL_{*}&=K_{*}(C^{*}_{u,L, \max,\infty}((P_{d}(X_{n}),\mathcal{A}(V_{n}))_{n\in\mathbb{N}})_{\cup_{i=1}^{j}O^{(r,i)}})
\end{align*}
 Now, Theorem \ref{the. 6.1} follows from Theorem \ref{the. 6.12} and the five lemma.\end{proof}

\section{Proof of the main theorem}\label{section 7}
In this section, we shall complete the proof of Theorem \ref{the. 1.1}. To do so, we will construct the Dirac maps $\alpha$ and $\alpha_{L}$, to establish the following commutative diagram. 

\begin{center}
	\begin{tikzcd}
\lim\limits_{d\rightarrow\infty}K_{*+1}(C^{*}_{u, L, \max,\infty}(P_{d}(X_{n}))_{n\in\mathbb{N}}) \arrow[d,"(\beta_{L})_{*}"] \arrow[r,"e_{*}"] & \lim\limits_{d\rightarrow\infty}K_{*+1}(C^{*}_{u, \max,\infty}(P_{d}(X_{n}))_{n\in\mathbb{N}})\arrow[d,"\beta_{*}"] \\
	\lim\limits_{d\rightarrow\infty}K_{*}(C^{*}_{u, L, \max,\infty}((P_{d}(X_{n}),\mathcal{A}(V_{n}))_{n\in\mathbb{N}})) \arrow[d,"(\alpha_{L})_{*}"] \arrow[r,"e^{\mathcal{A}}_{*}"] & 	\lim\limits_{d\rightarrow\infty}K_{*}(C^{*}_{u, \max,\infty}((P_{d}(X_{n}),\mathcal{A}(V_{n}))_{n\in\mathbb{N}}))\arrow[d,"\alpha_{*}"] \\
\lim\limits_{d\rightarrow\infty}K_{*}(C^{*}_{u, L, \max,\infty}((P_{d}(X_{n}),\mathcal{SK}(\mathcal{L}^{2}(E_{n})))_{n\in\mathbb{N}})) \arrow[r,"e_{*}"] & 	\lim\limits_{d\rightarrow\infty}K_{*}(C^{*}_{u, \max,\infty}((P_{d}(X_{n}),\mathcal{SK}(\mathcal{L}^{2}(E_{n})))_{n\in\mathbb{N}})).
	\end{tikzcd}
\end{center}
In subsection \ref{subsection. 7.2}, we establish a geometric analogue of the finite-dimensional Bott periodicity introduced by Higson, Kasparov, and Trout in \cite{HKT98}. Theorem \ref{the. 1.1} is derived from the reduction of the maximal coarse Baum-Connes conjecture to the maximal twisted coarse Baum-Connes conjecture for a sequence of metric spaces $(X_{n})_{n\in\mathbb{N}}$ with uniform bounded geometry which admits an A-by-FCE coarse fibration structure.

\subsection{Constructions of the Dirac maps $\alpha$ and $\alpha_{L}$}\label{subsection. 7.1}
Recall first that 
$(X_{n})_{n\in\mathbb{N}}$ is a sequence of metric spaces with uniform bounded geometry which admit an A-by-FCE coarse fibration structure. For each $d\geqslant 0$, $F_{d,n}$ denotes a countable dense subset of $P_{d}(Y_{n})$, while $Z_{d,n}$ is the corresponding countable dense subset of $P_{d}(X_{n})$. For any $n\in\mathbb{N}$, $V_{n}$ is defined to be the finite-dimensional affine subspace of $H$ spanned by
$t_{y}(z)(s(z))$ for all $z\in B_{Y_{n}}(y,l_{n})$ with $y\in Y_{n}$.
Take a linear span of $V_{n}$ and $0$ we obtain a finite-dimensional linear space of Hilbert space $H$ containing $V_{n}$, i.e.
\[E_{n}:=\text{linear-span}\{V_{n},0\}.\]
Denote by
\[\mathcal{L}^{2}_{n}:=L^{2}(E_{n},\text{Cliff}{(E_{n}))}\]
the $\mathbb{Z}/2$-graded complex Hilbert space of square-integrable functions from $E_{n}$ into Cliff$(E_{n})$, where $E_{n}$ is endowed with the Lebesgue measure induced from the inner product on $E_{n}$, and 
the grading of $\mathcal{L}^{2}_{n}$ is inherited from the Clifford algebra $\text{Cliff}{(E_{n})}$.
Denote by $s(E_{n})$ the Schwartz subspace of $\mathcal{L}^{2}_{n}$.

$\bullet$ The Dirac operator $D_{E_{n}}$ is the unbounded operator on $\mathcal{L}^{2}_{n}$ with domain $s(E_{n})$, defined by the formula
\[(D_{E_{n}}\xi )(v)=\sum^{n}_{i=1}(-1)^{degree{(\xi)}}\frac{\partial\xi}{\partial x_{i}}(v)\cdot e_{i},\]
where $\xi\in s(E_{n})\subseteq\mathcal{L}^{2}_{n}$,  $v\in E_{n}$,  $\{e_{1},\dots,e_{n}\}$ is an orthogonal basis for $E_{n}$, and $\{x_{1},\dots,x_{n}\}$ is the dual coordinate system on $E_{n}$. Denote by $\mathcal{K}(\mathcal{L}^{2}_{n})$ the graded $C^{*}$-algebra of compact operators on $\mathcal{L}^{2}_{n}$ for all $n\in\mathbb{N}$. Then the Dirac operator can be viewed as a grading-degree one, essentially a self-adjoint multiplier of $\mathcal{K}(\mathcal{L}^{2}_{n})$, with domain those compact operators which map $\mathcal{L}^{2}_{n}$
 into the Schwartz space.
 
$\bullet$ The Clifford operator $C_{E_{n},v_{0}}$ of $E_{n}$ at basis point $v_{0}\in E_{n}$ is an unbounded operator on $\mathcal{L}^{2}_{n}$ with domain $s(E_{n})$,
defined by the formula
\[(C_{E_{n},v_{0}}\xi)(v)=(v-v_{0})\cdot\xi(v)\]
for all $\xi\in \mathcal{L}^{2}_{n}$, $v\in E_{n}$, where the multiplication $\cdot$ is the Clifford multiplication. Note that the Clifford operator $C_{E_{n},v_{0}}$ depends on the choice of the base point $v_{0}$. Sometimes we abbreviate the Clifford operator $C_{E_{n},0}$ at base point $0$ as $C_{E_{n}}$.

$\bullet$ The Bott-Dirac operator $B_{E_{n},v_{0}}$ of $E_{n}$ at basis point $v_{0}\in E_{n}$ is the unbounded symmetric operator on $\mathcal{L}^{2}_{n}$ with domain again $s(E_{n})$, defined by the formula
\[B_{E_{n},v_{0}}=D_{E_{n}}+C_{E_{n},v_{0}}.\]
One can also show that $B_{E_{n},v_{0}}$ is essentially self-adjoint, and has compact
resolvent. In addition, the square of $B_{E_{n},v_{0}}$ admits an orthonormal eigenbasis of Schwartz-class
functions, with eigenvalues $2n (n=0, 1, \dots)$, each of finite multiplicity. Hence
$B_{E_{n},v_{0}}$ admits an orthonormal eigenbasis of Schwartz-class functions, with eigenvalues $\pm\sqrt{2n}$, each of finite multiplicity. The kernel of $B_{E_{n},v_{0}}$ is spanned by $\exp(\frac{1}{2}\|e-v_{0}\|^{2})\cdot 1$. This fact ensures the constructions in the proof of Theorem \ref{the. 7.8}. Consult \cite{WY20} for more related results.

\begin{definition}\label{def. 7.1}
For any $d\geqslant 0$, define
$\mathbb{C}_{u,\infty}
[(P_{d}(X_{n}),\mathcal{SK}(\mathcal{L}^{2}_{n}))_{n\in\mathbb{N}}]$ to be the set of all equivalence classes $T=[(T^{(0)},\dots,T^{(n)},\dots)]$ of sequences $(T^{(0)},\dots,T^{(n)},\dots)$ described as follows
\begin{enumerate}
	\item [(1)]\hspace{0pt}for each $n\in\mathbb{N}$, 
$T^{(n)}$ is a bounded function from $Z_{d,n}\times Z_{d,n}$ to $\mathcal{SK}(\mathcal{L}^{2}_{n})\hat{\otimes}\mathcal{K}$ such that 
\[\sup_{n\in\mathbb{N}} \sup_{x,x'\in Z_{d,n}}\|T^{(n)}(x,x')\|_{\mathcal{SK}(\mathcal{L}^{2}_{n})\hat{\otimes}\mathcal{K}}<\infty\]
 and the collection $(T^{(n)}(x,x'))_{x,x'\in Z_{d,n},n\in\mathbb{N}}$ is equi-continuous from $\mathbb{R}$ to $\mathcal{K}(\mathcal{L}^{2}_{n})\hat{\otimes}\mathcal{K}$;
\item [(2)]\hspace{0pt}for each $n\in\mathbb{N}$ and any bounded subset $B\subset P_{d}(X_{n})$, the set
\[\{(x,x')\in (B\times B)\cap (Z_{d,n}\times Z_{d,n})\mid T^{(n)}(x,x')\neq 0 \}<\infty\]
is finite; 
\item [(3)]\hspace{0pt}there exists $L>0$ such that 
\[\#\{x'\in Z_{d,n}\mid T^{(n)}(x,x')\neq 0\}<L \quad\text{and}\quad 
\#\{x'\in Z_{d,n}\mid T^{(n)}(x',x)\neq 0\}<L\]
for all $x\in Z_{d,n},n\in\mathbb{N};$
\item [(4)]\hspace{0pt}there exists $R>0$ such that $T^{(n)}(x,x')=0$ whenever $d(x,x')>R$ for $x,x'\in Z_{d,n},n\in\mathbb{N}.$
\end{enumerate}
\end{definition}
 The equivalence relation $\thicksim$ on these sequences is defined by 
\[(T^{(0)},\dots,T^{(n)},\dots)\thicksim(S^{(0)},\dots,S^{(n)},\dots)\] if and only if
\[\lim_{n\rightarrow \infty}\sup_{x,x'\in Z_{d,n}}{\|T^{(n)}(x,x')-S^{(n)}(x,x')\|}_{\mathcal{SK}(\mathcal{L}^{2}_{n})\hat{\otimes}\mathcal{K}}=0.\]

 Take two arbitrary classes ${[(T^{(0)},\dots,T^{(n)},\dots)]}$ and $[(S^{(0)},\dots,S^{(n)},\dots)]$ in 
$\mathbb{C}_{u,\infty}
[(P_{d}(X_{n}),\mathcal{SK}(\mathcal{L}^{2}_{n})_{n\in\mathbb{N}}],$ 
their product is defined to be
\[[(T^{(0)},\dots,T^{(n)},\dots)][(S^{(0)},\dots,S^{(n)},\dots)]=[((TS)^{(0)},\dots,
(TS)^{(n)},\dots)],\]
where there exists a sufficiently large $N\in\mathbb{N}$ depending on the propagations of these two sequences satisfying that $(TS)^{(n)}=0$ for all $n<N$ and 
\[(TS)^{(n)}(x,x')=\sum_{z\in Z_{d,n}}
(T^{(n)}(x,z))\cdot ((t_{p_{n}(x)p_{n}(z)})_{*}(S^{(n)}(z,x')))\]
for all $x,x'\in Z_{d,n}$ and $n\geqslant N$.

\begin{remark} Here are some explanations of the isomorphism $(t_{p_{n}(x)p_{n}(z)})_{*}$ from $\mathcal{SK}(\mathcal{L}_{n}^{2})\hat\otimes\mathcal{K}$ to $\mathcal{SK}(\mathcal{L}_{n}^{2})\hat\otimes\mathcal{K}$. We start with the isometry between two affine subspaces.\par 
Let $H$ be an infinite-dimensional separable Hilbert space. Denote by $V_{a}$ and $ V_{b}$ the finite-dimensional affine subspaces of $H$. Let $V_{a}^{0}$ and $V_{b}^{0}$ be the linear subspaces of $H$ consisting of differences of elements of $V_{a}$ and $V_{b}$, respectively. 
If $t_{ba}:V_{a}\rightarrow V_{b}$ is an isometric bijection, there exists a canonical unitary 
\[U_{t_{ba}}:L^{2}(V_{a},\text{Cliff}(V_{a}^{0}))\rightarrow L^{2}(V_{b},\text{Cliff}(V_{b}^{0}))\]
defined by the formula 
\[(U_{t_{ba}}\xi)(v_{a})=(t_{0})_{*}(\xi(t_{ba}^{-1}v_{a})),\]
where $(t_{0})_{*}:\text{Cliff}(V_{a}^{0})\rightarrow\text{Cliff}(V_{b}^{0})$ is induced by isometric bijection between linear subspaces:
\[t_{0}:=t_{ba}|_{V_{a}^{0}}:V_{a}^{0}\rightarrow V_{b}^{0}.\] 
Moreover, the unitary $U_{t_{ba}}$ further induces a $*$-isomorphism 
\[(t_{ba})_{*}:\mathcal{K}(L^{2}(V_{a},\text{Cliff}(V_{a}^{0})))\rightarrow \mathcal{K}(L^{2}(V_{b},\text{Cliff}(V_{b}^{0})))\] defined by conjugation with the unitary. In the case when $t_{ba}$ is not bijective, there exists an affine isometry (not necessarily unitary) \[T_{ba}:L^{2}(V_{a},\text{Cliff}(V_{a}^{0}))\rightarrow L^{2}(V_{b},\text{Cliff}(V_{b}^{0}))\cong L^{2}(V_{ba}^{0},\text{Cliff}(V_{ba}^{0})\hat\otimes L^{2}(V_{a},\text{Cliff}(V_{a}^{0}) \]
given by the formula
\[(T_{ba}\xi)(v_{ba}+v_{a})=\pi^{-\frac{n}{4}}\cdot e^{-\frac{\|v_{ba}\|^{2}}{2}}\cdot\xi(v_{a})\]
where $v_{a}\in V_{a}$, $v_{ba}\in V_{ba}^{0}$, and $n$ is the difference of the dimensions of $V_{b}$ and $V_{a}$ (Cf. \cite{HKT98, WY20}). 

We now come back to the case of isometry $(t_{p_{n}(x)p_{n}(z)})_{*}$, with the assumption in Definition \ref{def. 7.1} above, we denote
\begin{align*}
	W_{p_{n}(x)}&=\text{affine-span}\{t_{p_{n}(x)}(w)(s(w))\mid w\in B(p_{n}(x),l_{n})\cap B(p_{n}(z),l_{n})\}=t_{p_{n}(x)p_{n}(z)}
	(W_{p_{n}(z)}),\\
		W_{p_{n}(z)}&=\text{affine-span}\{t_{p_{n}(z)}(w)(s(w))\mid w\in B(p_{n}(x),l_{n})\cap B(p_{n}(z),l_{n})\}=t_{p_{n}(z)p_{n}(x)}
		(W_{p_{n}(x)}),
\end{align*}
where the affine isometry $t_{p_{n}(x)p_{n}(z)}=t_{p_{n}(x)}(w)\circ t^{-1}_{p_{n}(z)}(w)$ for all $w\in B(p_{n}(x),l_{n})\cap B(p_{n}(z),l_{n})$
maps the affine subspaces $W_{p_{n}(z)}$ onto $W_{p_{n}(x)}$. We have the direct sum decomposition 
\[E_{n}=W_{p_{n}(x)}^{\bot}\oplus W_{p_{n}(x)}=W_{p_{n}(z)}^{\bot}\oplus W_{p_{n}(z)},\] 
where $W_{p_{n}(x)}^{\bot}$ and $W_{p_{n}(z)}^{\bot}$ represent the linear orthogonal completments of $W_{p_{n}(x)}$ and $W_{p_{n}(z)}$ in $E_{n}$, respectively. Accordingly, we have that
\[\mathcal{L}^{2}_{n}=L^{2}(W_{{p_{n}(x)}},\text{Cliff}(W_{p_{n}(x)}^{0}))\hat\otimes L^{2}(W_{{p_{n}(x)}^{\perp}},\text{Cliff}(W_{{p_{n}(x)}^{\perp}}^{0})).\]
Since the isometric bijection $t_{p_{n}(x)p_{n}(z)}$ canonically induces a unitary $U_{p_{n}(x)}$ on $L^{2}(W_{{p_{n}(x)}},\text{Cliff}(W_{p_{n}(x)}^{0}))$, we only need to choose a unitary operator
\[U_{p_{n}(x)p_{n}(z)}:W_{p_{n}(z)}^{\bot}\rightarrow W_{p_{n}(x)}^{\bot}.\]
Then
$U_{p_{n}(x)p_{n}(z)}\oplus t_{p_{n}(x)p_{n}(z)}$ 
becomes an affine isometry from $E_{n}$ onto $E_{n}$. By conjugation with the unitary, we finally obtain the $*$-isomorphism
\[(t_{p_{n}(x)p_{n}(z)})_{*}:=(U_{p_{n}(x)p_{n}(z)}\oplus t_{p_{n}(x)p_{n}(z)})_{*}\hat\otimes 1:\mathcal{SK}(\mathcal{L}^{2}_{n})\hat\otimes\mathcal{K}\rightarrow\mathcal{SK}(\mathcal{L}^{2}_{n})\hat\otimes\mathcal{K}.\]
Moreover, in Rips complexes for each $d\geqslant 0$, if $p_{n}(x),p_{n}(z)\in F_{d,n}\subseteq P_{d}(Y_{n}),n\in\mathbb{N}$, with $p_{n}(x)\in\text{Star}(p_{n}(\bar{x}))$ and $p_{n}(z)\in\text{Star}(p_{n}(\bar{z}))$, we define $(t_{p_{n}(x)p_{n}(z)})_{*}=(t_{p_{n}(\bar{x})p_{n}(\bar{z})})_{*}$.

\end{remark}

The $\ast$-structure for $\mathbb{C}_{u,\infty}
[(P_{d}(X_{n}),\mathcal{SK}(\mathcal{L}^{2}_{n}))_{n\in\mathbb{N}}]$ is defined by 
\[{[(T^{(0)},\dots,T^{(n)},\dots)]}^{*}=[({(T^{*})}^{(0)},\dots,{(T^{*})}^{(n)},\dots)],\]
where 
\[{(T^{*})}^{(n)}(x,x')=(t_{p_{n}(x)p_{n}(x')})_{*}
((T^{(n)}(x',x))^{*})\]
for all but finitely many $n$, and $0$ otherwise.

Now, $\mathbb{C}_{u,\infty}
[(P_{d}(X_{n}),\mathcal{SK}(\mathcal{L}^{2}_{n}))_{n\in\mathbb{N}}]$ is made into a $\ast$-algebra by using the additional usual matrix operations. 
Define
\[C^{*}_{u,\max,\infty}((P_{d}(X_{n}),\mathcal{SK}(\mathcal{L}^{2}_{n}))_{n\in\mathbb{N}})\]
to be the completion of $\mathbb{C}_{u,\infty}
[(P_{d}(X_{n}),\mathcal{SK}(\mathcal{L}^{2}_{n}))_{n\in\mathbb{N}}]$ with respect to the maximal norm 
\[\|T\|_{\max}:=\sup\{\|\phi(T)\|_{\mathcal{B}(H_{\phi})}	\mid\phi:	\mathbb{C}_{u,\infty}
[(P_{d}(X_{n}),\mathcal{SK}(\mathcal{L}^{2}_{n}))_{n\in\mathbb{N}}]\rightarrow\mathcal{B}(H_{\phi}), \text{a}\ast\text{-representation}\}.\]

\begin{definition}\label{{def. 7.3}}
 For each $d\geqslant 0$, define $\mathbb{C}_{u,L,\infty}[(P_{d}(X_{n}),\mathcal{SK}
(\mathcal{L}^{2}_{n}))_{n\in\mathbb{N}}]$ to be the
$\ast$-algebra of all bounded and uniformly norm-continuous functions
\[f:\mathbb{R}_{+}\rightarrow\mathbb{C}_{u,\infty}[(P_{d}(X_{n}),\mathcal{SK}
(\mathcal{L}^{2}_{n}))_{n\in\mathbb{N}}] \]
such that $f(t)$ is of the form $f(t)=[({f}^{(0)}(t),\dots,{f}^{(n)}(t),\dots)]$ for all $t\in\mathbb{R}_{+}$, where the family of functions $({f}^{(n)}(t))_{n\in\mathbb{N}, t\geqslant 0}$ satisfy the conditions in Definition \ref{def. 7.1} with uniform constants,
and there exists a bounded function $R(t):\mathbb{R}_{+}\rightarrow\mathbb{R}_{+}$ with $\lim_{t\rightarrow\infty} R(t)=0$ such that
\[({f}^{(n)}(t))(x,x')=0\quad \text{whenever}\quad d(x,x')>R(t)\]
for all $x,x'\in Z_{d,n},n\in\mathbb{N}, t\in\mathbb{R}_{+}$.
Define \[C^{*}_{u,L,\max,\infty}((P_{d}(X_{n}),\mathcal{SK}
(\mathcal{L}^{2}_{n}))_{n\in\mathbb{N}})\]to be the completion of $\mathbb{C}_{u,L,\infty}[(P_{d}(X_{n}),\mathcal{SK}
(\mathcal{L}^{2}_{n}))_{n\in\mathbb{N}}]$
with respect to the norm
\[\|f\|_{\max}:=\sup_{t\in\mathbb{R}_{+}}\|f(t)\|_{\max}.\]
\end{definition}

Recall that for any $y\in Y_{n}$ and $k\geqslant 0$, 
\[W_{k}(y):=\text{affine-span}\{t_{y}(z)(s(z))\mid
z\in F_{d,n}\cap
B_{P_{d}(Y_{n})}(y,k)\}\]
is an affine subspace of $E_{n}$ for some suitable large $n\in\mathbb{N}$. Denote by $(W_{k}(y))^{\bot}=E_{n}\ominus W_{k}(y)$ the orthogonal complement of $W_{k}(y)$ in $E_{n}$. Then the Bott-Dirac operator on $(W_{k}(y))^{\bot}$ is defined to be
\[B_{(W_{k}(y))^{\bot}}=C_{(W_{k}(y))^{\bot}}+D_{(W_{k}(y))^{\bot}},\]
where $C_{(W_{k}(y))^{\bot}}$ is the Clifford operator of $(W_{k}(y))^{\bot}$ with base point $0$, while $D_{(W_{k}(y))^{\bot}}$ is the Dirac operator on $(W_{k}(y))^{\bot}$.
For every $k\geqslant 0$ and  $t\geqslant 1$, we define
\[\theta^{k}_{t}(y):\mathcal{A}(W_{k}(y))\hat\otimes\mathcal{K}\rightarrow\mathcal{SK}(\mathcal{L}^{2}_{n})\hat\otimes\mathcal{K}\]
by the formula
\[(\theta^{k}_{t}(y))(g\hat\otimes h\hat\otimes b)=g_{t}(X\hat\otimes 1+1\hat\otimes (B_{(W_{k}(y))^{\bot}}+D_{W_{k}(y)}))(1\hat\otimes M_{h_{t}})\hat\otimes b
\]
for any $g\in\mathcal{S}$, $h\in\mathcal{C}(W_{k}(y))$, $b\in\mathcal{K}$, where $g_{t}(s)=g(t^{-1}s)$
for all $t\geqslant 1$ and $s\in\mathbb{R}$, the function $h_{t}\in\mathcal{C}(W_{k}(y)) $ is defined by 
\[h_{t}(v)=h(t_{y}(y)s(y)+t^{-1}(v-t_{y}(y)s(y)))\] for any $t\geqslant 1$ and $v\in W_{k}(y)$. In addition,
 $M_{h_{t}}$ acts on $L^{2}(W_{k}(y),\text{Cliff}(W^{0}_{k}(y)))$ by pointwise multiplication, 
 we define
\[(M_{h_{t}}\xi )(v)=h_{t}(v)\cdot\xi(v)
\] for all $\xi\in L^{2}(W_{k}(y),\text{Cliff}(W^{0}_{k}(y)))$ and $v\in W_{k}(y)$. Note that $\theta^{k}_{t}(y)$
is defined on
a dense subalgebra of $A(W_{k}(y))\hat\otimes \mathcal{K}$.

\begin{definition}[The Dirac map]\label{{def. 7.4}} Let $d\geqslant 0$. For each $t\in[1,\infty)$,  define a map
\[\alpha_{t}:\mathbb{C}_{u,\infty}[(P_{d}(X_{n}),\mathcal{A}(V_{n}))_{n\in\mathbb{N}}]\rightarrow\mathbb{C}_{u,\infty}[(P_{d}(X_{n}),\mathcal{SK}(\mathcal{L}^{2}_{n}))_{n\in\mathbb{N}}]\]
by the formula
\[\alpha_{t}(T)=[((\alpha_{t}(T))^{(0)},\dots,(\alpha_{t}(T))^{(n)},\dots)]\]
for every $T=[(T^{(0)},\dots,T^{(n)},\dots)]\in \mathbb{C}_{u,\infty}[(P_{d}(X_{n}),\mathcal{A}(V_{n}))_{n\in\mathbb{N}}]$ with \[(\alpha_{t}(T))^{n}(x,x')=({\theta_{t}^{k_{0}}}(x))(T_{1}^{(n)}(x,x')),\] where $k_{0}$ is a non-negative number such that, for any pair $x$ and $x'$ in $Z_{d,n}$ with $n\in\mathbb{N}$, there exists $T_{1}(x,x')\in\mathcal{A}(W_{k_{0}}(y))\hat\otimes\mathcal{K}$ satisfying \[T^{(n)}(x,x')=(\beta_{V_{n},W_{k_{0}}(y)}\hat{\otimes}1)({T_{1}}^{(n)}(x,x')).\]\end{definition}

\begin{remark}\label{rem. 7.5} 
(1) For each $t\in[1,\infty)$, the map $\alpha_{t}$ is well-defined as a consequence of the Rellich Lemma and ellipticity of Dirac operators.

(2)  Based on the construction of the Bott map  in section \ref{section 5} and the definition of the Dirac map mentioned above, we have the following asymptotically commutative diagram
\small
\begin{center}
\begin{tikzcd}
\mathcal{S}\hat\otimes\mathcal{K}\cong\mathcal{A}(W_{{0}}(y))\hat\otimes\mathcal{K} \arrow[rr,"\beta_{W_{k_{1}}(y),W_{0}(y)}\hat\otimes 1"] \arrow[rrrrrrdd,"\theta^{0}_{t}(y)"description] &  & \mathcal{A}(W_{{k_{1}}}(y))\hat\otimes\mathcal{K}  \arrow[rr,"\beta_{W_{k_{2}}(y),W_{k_{1}}(y)}\hat\otimes 1"] \arrow[rrrrdd, "\theta^{k_{1}}_{t}(y)"description] &  & \mathcal{A}(W_{{k_{2}}}(y))\hat\otimes\mathcal{K}  \arrow[r] \arrow[rrdd,"\theta^{k_{2}}_{t}(y)"description] & \dots \arrow[r] \arrow[rdd, dashed] & \mathcal{A}(V_{n})\hat\otimes\mathcal{K}  \arrow[dd] \\
	&  &                            &  &                         &                         	& \\ &                            &  &                         &             &       &\mathcal{SK}(\mathcal{L}^{2}_{n})\hat\otimes\mathcal{K}          
\end{tikzcd}
\end{center}
\normalsize
Note that the asymptotic morphism
\[\beta_{W_{k_{j}}(y),W_{k_{j-1}}(y)}\hat\otimes 1:\mathcal{A}(W_{{k_{j-1}}}(y))\hat\otimes\mathcal{K}\rightarrow \mathcal{A}(W_{{k_{j}}}(y))\hat\otimes\mathcal{K}\] in the horizontal direction is defined by the formula
\[
g\hat\otimes h\hat\otimes b\mapsto g_{t}(X\hat\otimes 1+1\hat\otimes C_{W_{k_{j-1}}(y),W_{k_{j}}(y)})(1\hat\otimes M_{h_{t}})\hat\otimes b\]
for each $j\in\mathbb{N}$ and $t\in [1,\infty)$, where $g_{t}$ performs a functional calculus on the Clifford operator $C_{W_{k_{j-1}}(y),W_{k_{j}}(y)}$ causing the function originally defined on $\mathbb{R}_{+}\times W_{k_{j-1}}(y) $ to rotate into a function defined on $\mathbb{R}_{+}\times W_{k_{j}}(y)$. As the parameter $j$ is taken from $k_{0}=0$ to $k_{n}$, the 0-dimensional space $\{t_{y}(y)(s(y))\}$ is elevated to finite-dimensional affine subspace $V_{n}$ of $E_{n}$.
On the other hand,
$\theta_{t}^{k_{j}}$ 
below the slope is defined by the formula 
\[\theta_{t}^{k_{j}}(g\hat\otimes h\hat\otimes b)=g_{t}(X\hat\otimes 1+1\hat\otimes (C_{{W_{k_{j}}(y)}^{\perp}}+D_{E_{n}}))(1\hat\otimes M_{h_{t}})\hat\otimes b
\]from $\mathcal{A}(W_{{k_{j}}}(y))\hat\otimes\mathcal{K}$ to $\mathcal{SK}(\mathcal{L}^{2}_{n})\hat\otimes \mathcal{K}$,
performs a functional calculus on 
the Clifford operator of the orthogonal complement of $W_{k_{j}}(y)$ in $E_{n}$, and a functional calculus on the Dirac operator of the entire linear space $E_{n}$. It is clear that the map $\alpha_{t}$ does not asymptotically depend on the choice of $k$ (Cf. Proposition 2 in Section 4 of \cite{HKT98}, or Lemma 3.5 in \cite{HG04}). Moreover, the composition of all the Bott maps and the Dirac maps is essentially a functional calculation of the Bott-Dirac operator of the entire space $E_{n}$. 
This fact enables us to establish a geometric analogue of the Bott periodicity in finite dimensions, see Theorem \ref{the. 7.8} later.
\end{remark}

\begin{definition}\label{def. 7.6} Let $d\geqslant 0$. For any $t\in [1,\infty)$, define a map
\[(\alpha_{L})_{t}:\mathbb{C}_{u,L,\infty}[(P_{d}(X_{n}),\mathcal{A}(V_{n}))_{n\in\mathbb{N}}]\rightarrow\mathbb{C}_{u,L,\infty}[(P_{d}(X_{n}),\mathcal{SK}(\mathcal{L}^{2}_{n}))_{n\in\mathbb{N}}]\]
by the formula
\[((\alpha_{L})_{t}(f))(s)=(\alpha_{t})(f(s))\]
for all $s\in \mathbb{R}_{+}$.\end{definition}

\begin{theorem}\label{the. 7.7}
For each $d\geqslant 0$, the maps $(\alpha_{t})_{t\geqslant 1}$ and $((\alpha_{L})_{t})_{t\geqslant 1}$
extend to asymptotic morphisms
\begin{align*}
	\alpha_{t}:C^{*}_{u,\max,\infty}((P_{d}(X_{n}),\mathcal{A}(V_{n}))_{n\in\mathbb{N}})&\rightarrow C^{*}_{u,\max,\infty}((P_{d}(X_{n}),\mathcal{SK}
	(\mathcal{L}^{2}_{n}))_{n\in\mathbb{N}}),\\
	(\alpha_{L})_{t}:C^{*}_{u,L,\max, \infty}((P_{d}(X_{n}),\mathcal{A}(V_{n}))_{n\in\mathbb{N}})&\rightarrow C^{*}_{u,L,\max\infty}((P_{d}(X_{n}),\mathcal{SK}
	(\mathcal{L}^{2}_{n}))_{n\in\mathbb{N}}).
\end{align*}\end{theorem}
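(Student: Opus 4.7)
The strategy closely parallels the argument for the Bott maps in Lemma \ref{rem. 5.11}, so I would proceed in two main stages: first verify that $(\alpha_t)_{t\geqslant 1}$ defines an asymptotic morphism at the algebraic level, then extend to the maximal completion via universality. At the algebraic level, one must check that for every $T\in\mathbb{C}_{u,\infty}[(P_d(X_n),\mathcal{A}(V_n))_{n\in\mathbb{N}}]$ the function $t\mapsto\alpha_t(T)$ is norm-continuous, and that
\[
\lim_{t\to\infty}\bigl\|\alpha_t(T_1T_2)-\alpha_t(T_1)\alpha_t(T_2)\bigr\|=0,\quad
\lim_{t\to\infty}\bigl\|\alpha_t(T^*)-\alpha_t(T)^*\bigr\|=0,
\]
with the linearity relations being immediate from the construction of $\theta^k_t(y)$. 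Well-definedness of $\alpha_t$ itself follows from Rellich's lemma and ellipticity of the Bott-Dirac operator (as in Remark \ref{rem. 7.5}), ensuring that the functional calculus lands in $\mathcal{SK}(\mathcal{L}_n^2)\hat\otimes\mathcal{K}$; independence of the choice of $k_0$ up to asymptotic equivalence follows from the commutativity diagram sketched in Remark \ref{rem. 7.5} and the arguments of \cite{HKT98, HG04}.

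The hard part will be the asymptotic multiplicativity. After unravelling the definitions, the issue reduces to the uniform convergence
\[
\bigl\|\theta^k_t(y)-(t_{yy'})_*\circ \theta^k_t(y')\circ(t_{y'y})_*\bigr\|\longrightarrow 0
\]
on elements of the form $g\hat\otimes h$ with $g\in\mathcal{S}$, $h\in\mathcal{C}(W_k(y'))$, uniformly in $y,y'\in F_{d,n}$, $n\in\mathbb{N}$, with $d_{P_d(Y_n)}(y,y')\leqslant R$. Writing $\theta^k_t(y)(g\hat\otimes h)=g_t\bigl(X\hat\otimes 1+1\hat\otimes(B_{W_k(y)^\perp}+D_{W_k(y)})\bigr)(1\hat\otimes M_{h_t})$, the affine isometry $t_{yy'}$ carries $W_k(y')$ onto an affine subspace of $W_{k+\tilde R}(y)$ and intertwines the associated Dirac and Clifford operators with an error controlled by the distance of the base points $t_y(y)s(y)$ and $t_y(y')s(y')$, bounded by $\rho_2(d(y,y'))$ via the fibred coarse embedding. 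Resolvent estimates for the essentially self-adjoint Bott-Dirac operator, together with an approximation argument on the generators $g(x)=(x\pm i)^{-1}$ of $\mathcal{S}$, then give a bound of the form $C\,t^{-1}\rho_2(R)$, which is exactly what is required (cf. Lemma 7.3 of \cite{HKT98}). Combined with Lemma \ref{lem. 3.4}, this yields the desired asymptotic multiplicativity with respect to the maximal norm on the target.

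Having established that $(\alpha_t)_{t\geqslant 1}$ defines a $*$-homomorphism $\alpha$ from the $*$-algebra $\mathbb{C}_{u,\infty}[(P_d(X_n),\mathcal{A}(V_n))_{n\in\mathbb{N}}]$ into the asymptotic $C^*$-algebra
\[
\mathcal{Q}\bigl(C^*_{u,\max,\infty}((P_d(X_n),\mathcal{SK}(\mathcal{L}_n^2))_{n\in\mathbb{N}})\bigr),
\]
satisfying $\|\alpha(T)\|\leqslant\|T\|$ (again via Lemma \ref{lem. 3.4} applied to the equi-continuous and finite-propagation nature of the output), the universal property of the maximal norm immediately upgrades $\alpha$ to a $*$-homomorphism from $C^*_{u,\max,\infty}((P_d(X_n),\mathcal{A}(V_n))_{n\in\mathbb{N}})$ into $\mathcal{Q}\bigl(C^*_{u,\max,\infty}((P_d(X_n),\mathcal{SK}(\mathcal{L}_n^2))_{n\in\mathbb{N}})\bigr)$, i.e.\ to an asymptotic morphism of the claimed form.

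For the localized version $(\alpha_L)_t$, the functions $f(s)$ of the argument $s\in\mathbb{R}_+$ satisfy the same conditions with uniform constants and a vanishing propagation function $R(s)$, so the algebraic checks above pass to the level of uniformly norm-continuous bounded functions $\mathbb{R}_+\to\mathbb{C}_{u,\infty}[(P_d(X_n),\mathcal{A}(V_n))_{n\in\mathbb{N}}]$ without modification. One then promotes to the $C^*$-completions exactly as above, giving the second asymptotic morphism and completing the proof.
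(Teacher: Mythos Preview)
Your overall strategy is correct and matches the paper's: verify the asymptotic-morphism axioms on the dense $*$-algebra, then extend by universality of the maximal norm. However, your reduction of the asymptotic-multiplicativity check is incomplete. You claim the issue reduces to
\[
\bigl\|\theta^k_t(y)-(t_{yy'})_*\circ\theta^k_t(y')\circ(t_{y'y})_*\bigr\|\longrightarrow 0,
\]
but to get there from $\alpha_t(TS)-\alpha_t(T)\alpha_t(S)$ you must first replace $\theta^k_t(y)(a\cdot b)$ by $\theta^k_t(y)(a)\cdot\theta^k_t(y)(b)$, and $\theta^k_t(y)$ is \emph{not} a $*$-homomorphism on $\mathcal{A}(W_k(y))\hat\otimes\mathcal{K}$, only an asymptotic one. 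The paper isolates this as a separate step (its Step~1), working on the controlled subsets $[\mathcal{A}(W_C(y))\hat\otimes\mathcal{K}]_{i_0,r,c}$ and invoking \cite[Lemma~2.9]{HKT98} and \cite[Lemma~7.5]{Yu00} to obtain the pointwise asymptotic-morphism estimates for $\theta^l_t$ uniformly in $y$ and $n$. Without this, your displayed intertwining condition does not by itself control the product defect.

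A second, smaller point: your explanation of the intertwining estimate imports the Bott-map mechanism (resolvent bound $Ct^{-1}\rho_2(R)$ coming from the base-point shift) too literally. In the Dirac setting the isometry $t_{yy'}$ changes the decomposition $E_n=W_k(y)^\perp\oplus W_k(y)$, not just a base point, so one also needs the compatibility of $\theta^k_t$ with the connecting maps $\beta_{W_{R+k}(y),t_{yy'}(W_k(y'))}$. The paper handles this via the asymptotically commutative square coming from \cite[Proposition~4.2]{HKT98} together with \cite[Lemmas~7.3 and~7.4]{Yu00}, rather than a single resolvent inequality. Your citation of ``Lemma~7.3 of \cite{HKT98}'' appears to be a slip for these references.
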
 
\begin{proof}[\rm{\textbf{Proof.}}]
\textbf{Step 1.} We first fix some notations. Let $l\geqslant 0$ large enough, for any subset 
$C\subseteq B_{P_{d}(Y_{n})}(y,l)\cap B_{P_{d}(Y_{n})}(y',l)$ where $y,y'\in F_{d,n}$ (for all but finitely many $n$) with $d(y,y')<l$, denote by
\begin{align*}
	W_{C}(y)&=\text{affine-span}\{t_{y}(w)(s(w))\mid w\in C\}=t_{yy'}(W_{C}(y')),\\
	W_{C}(y')&=\text{affine-span}\{t_{y'}(w)(s(w))\mid w\in C\}=t_{y'y}(W_{C}(y)).
\end{align*}
For any affine subspace $W$ with $W_{C}(y)\subseteq W\subseteq E_{n}$, we identify
$\mathcal{A}(W_{C}(y))$ with a subalgebra of $\mathcal{A}(W)$ via the map $\beta_{W,W_{C}(x)}$ defined in Definition \ref{def. 5.1}. For any $i_{0}>0$, $r>0$, $c>0$, denote by
$[\mathcal{A}(W_{C}(y))\hat\otimes \mathcal{K}]_{i_{0},r,c}$
the subset of $\mathcal{A}(W_{C}(y))\hat\otimes \mathcal{K}$ consisting of those elements 
of the form $\sum_{i=1}^{i_{0}} g_{i}\hat{\otimes}h_{i}\hat{\otimes}{k_{i}}$, where $g_{i}\in\mathcal{S}$, $h_{i}\in\mathcal{C}(W_{k}(y))$, $k_{i}\in\mathcal{K}$, $1\leqslant i\leqslant i_{0},$ satisfying
\begin{enumerate}
    \item [(1)]\hspace{0pt}$\text{Supp}(g_{i})\subseteq[-r,r]$ for each $1\leqslant i\leqslant i_{0}$;
    \item [(2)]\hspace{0pt}both $g_{i}$ and
$h_{i}$ are continuously differentiable and their derivatives satisfy $\|g'_{i}\|\leqslant c$ and 
$\|\bigtriangledown_{v}h_{i}\|\leqslant c$ for all $v\in W^{0}_{C}(y)$ such that $\|v-t_{y}(y)(s(y))\|\leqslant 1$.
\end{enumerate}
Then it follows from Lemma 7.5 in \cite{Yu00} and Lemma 2.9 in \cite{HKT98} that for any $\epsilon>0$ there exists $t_{0}>1$
such that for all $t\geqslant t_{0}$ and all $a,b\in[\mathcal{A}(W_{C}(y))\hat\otimes \mathcal{K}]_{i_{0},r,c}$, we have 
\begin{align*}
	\begin{split}
		\lim_{t\rightarrow\infty}\left\{
		\begin{array}{c}
	\theta^{l}_{t}(x)(ab)-(\theta^{l}_{t}(x)(a))(\theta^{l}_{t}(x)(b))\\
		\theta^{l}_{t}(x)(a+b)-(\theta^{l}_{t}(x)(a)+\theta^{l}_{t}(x)(b))\\
			\lambda(\theta^{l}_{t}(x)(a))-\theta^{l}_{t}(x)(\lambda a)\\
			\theta^{l}_{t}(x)(a^{*})-\theta^{l}_{t}(x)(a)^{*}
		\end{array}
		\right\}=0,
	\end{split}
\end{align*}
so that the maps
$(\theta^{l}_{t})_{t\geqslant 1}$ defines an asymptotic morphism from $\mathcal{A}(W_{C}(y))\hat\otimes\mathcal{K}$ to $\mathcal{SK}(\mathcal{L}^{2}_{n})\hat\otimes\mathcal{K}
$. 

\textbf{Step 2.} We next prove that the maps $(\alpha_{t})_{t\geqslant 1}$ is a well-defined asymptotic morphism from  $\mathbb{C}_{u,\infty}[(P_{d}(X_{n}),\mathcal{A}(V_{n}))_{n\in\mathbb{N}}]$ to 
$C^{*}_{u,\max,\infty}((P_{d}(X_{n}),\mathcal{SK}
(\mathcal{L}^{2}_{n}))_{n\in\mathbb{N}})$. It suffices to verify that
$\alpha_{t}(T)$ is norm continuous in $t\in [1,\infty)$, and 
\begin{align*}
	\begin{split}
		\lim_{t\rightarrow\infty}\left\{
		\begin{array}{c}
			\alpha_{t}(TS)-\alpha_{t}(T)\alpha_{t}(S)\\
		\alpha_{t}(T+S)-(\alpha_{t}(T)+\alpha_{t}(S))\\
			\lambda\alpha_{t}(T)-\alpha_{t}(\lambda T)\\
			\alpha_{t}(T^{*})-(\alpha_{t}(T))^{*}
		\end{array}
		\right\}=0
	\end{split}
\end{align*}
for all 
$T,S\in\mathbb{C}_{u,\infty}[(P_{d}(X_{n}),\mathcal{A}(V_{n}))_{n\in\mathbb{N}}]$ and $\lambda\in\mathbb{C}$. Below we shall only prove the first equality, others can be checked similarly. By Definition \ref{def. 5.5}, we calculate that
\small
\begin{equation}\label{equ. 5}
\begin{split}
	&\sup_{x,x'\in Z_{d,n},n\in\mathbb{N}}\|
		(\alpha_{t}(TS))^{(n)}(x,x')-(\alpha_{t}(T))^{(n)}\cdot(\alpha_{t}(S))^{(n)}(x,x')\|\\
	&=\sup_{x,x'\in Z_{d,n},n\in\mathbb{N}}\|
		\theta^{k}_{t}(y)((T_{1}S_{1})^{(n)}(x,x'))-
		\sum_{\xi\in Z_{d,n}}(\theta^{k}_{t}(y)(T_{1}^{(n)}(x,\xi)))\cdot ((t_{yy'})_{*}
		\theta^{k}_{t}(y')(S_{1}^{(n)}(\xi,x'))) \|\\
	&=\sup_{x,x'\in Z_{d,n},n\in\mathbb{N}}\|
		\theta^{k}_{t}(y)(\sum_{\xi\in Z_{d,n}}T^{(n)}_{1}(x,\xi)\cdot ((t_{yy'})_{*}S^{(n)}_{1}(\xi,x')))-
		\sum_{\xi\in Z_{d,n}}\theta^{k}_{t}(y)(T_{1}^{(n)}(x,\xi))\cdot (t_{yy'})_{*}
		(\theta^{k}_{t}(y')(S_{1}^{(n)}(\xi,x'))) \|\\
	&=\sup_{x,x'\in Z_{d,n},n\in\mathbb{N}}\|
		\sum_{\xi\in Z_{d,n}}\theta^{k}_{t}(y)T^{(n)}_{1}(x,\xi)\cdot (\theta^{k}_{t}(y)(t_{yy'})_{*}S^{(n)}_{1}(\xi,x')) -
		\sum_{\xi\in Z_{d,n}}\theta^{k}_{t}(y)T^{(n)}_{1}(x,\xi)\cdot (t_{yy'})_{*}
		\theta^{k}_{t}(y')S^{(n)}_{1}(\xi,x') \|\\
	&=\sup_{x,x'\in Z_{d,n},n\in\mathbb{N}}\|
		\sum_{\xi\in Z_{d,n}}\theta^{k}_{t}(y)T^{(n)}_{1}(x,\xi)\cdot[ \theta^{k}_{t}(y)(t_{yy'})_{*}S^{(n)}_{1}(\xi,x')- (t_{yy'})_{*}
		\theta^{k}_{t}(y')S^{(n)}_{1}(\xi,x')]\|
  \end{split}
\end{equation}
\normalsize
where $y=p_{n}(x),y'=p_{n}(\xi)$, and $k$ is a non-negative number such that, for any pair $x$ and $x'$ in $Z_{d,n},n\in\mathbb{N}$, there exists $T_{1}(x,x'),S_{1}(x,x')\in\mathcal{A}(W_{k}(y))\hat\otimes\mathcal{K}$ satisfying
\begin{align*}
	T^{(n)}(x,x')&=(\beta_{V_{n},W_{k}(y)}\hat{\otimes}1)({T_{1}}^{(n)}(x,x')),\\
	S^{(n)}(x,x')&=(\beta_{V_{n},W_{k}(y)}\hat{\otimes}1)({S_{1}}^{(n)}(x,x')).
\end{align*}
Let $y,y'\in F_{d,n}$ be such that $d(y,y')\leqslant R$
for some $R>0$. If $b\in[\mathcal{A}(W_{k}(y'))\hat\otimes \mathcal{K}]_{i_{0},r,c}$, viewing $\mathcal{A}(t_{yy'}(W_{k}(y')))$ as a subalgebra of $\mathcal{A}(W_{R+k}(y))$ via the map $\beta_{W_{R+k}(y),t_{yy'}(W_{k}(y'))}$, then we have
\[(t_{yy'})_{*}(b)\in[\mathcal{A}(t_{yy'}(W_{k}(y')))\hat\otimes \mathcal{K}]_{i_{0},r,c}\subseteq [\mathcal{A}(W_{R+k}(y))\hat\otimes\mathcal{K}]_{i_{0},r,c} .\]
By Proposition 4.2 in \cite{HKT98} one can verify that the following diagram
\begin{center}
\begin{tikzcd}
	{\mathcal{A}(W_{k}(y'))\hat\otimes\mathcal{K}} \arrow[rr,"(t_{yy'})_{*}"] \arrow[d,"\theta^{k}_{t}(y')"] &  & {\mathcal{A}(t_{yy'}(W_{k}(y')))\hat\otimes \mathcal{K}}\arrow[rr,"\beta_{W_{R+k}(y),t_{yy'}(W_{k}(y'))}"] &  & {{\mathcal{A}(W_{R+k}(y))\hat\otimes\mathcal{K}}} \arrow[d,"\theta^{R+k}_{t}(y)"] \\
	{\mathcal{SK}(\mathcal{L}^{2}_{n})\hat\otimes\mathcal{K}  } \arrow[rrrr,"(t_{yy'})_{*}"]         &  &               &  & {\mathcal{SK}(\mathcal{L}^{2}_{n})\hat\otimes\mathcal{K}  }          
\end{tikzcd}
\end{center}
is asymptotically commutative.
Together with Lemma 7.3 and Lemma 7.4 in \cite{Yu00},
we have that
\[(t_{yy'})_{*}(\theta^{k}_{t}(y'))(b)-(\theta^{R+k}_{t}(y))(\beta_{W_{R+k}(y),t_{yy'}(W_{k}(y'))}((t_{yy'})_{*}(b))\]
converges uniformly to $0$ in norm on the $R$-strip
$\{(y,y')\in F_{d,n}\times F_{d,n}\mid d(y,y')\leqslant R\}$ as $t$ goes to $\infty$. In general case, for suitable large $l> R+k$ and non-empty subset $C\subseteq B_{P_{d}(Y_{n})}(y,l)\cap B_{P_{d}(Y_{n})}(y',l)$ with $n$ sufficient large, the affine isometry $t_{yy'}:W_{C}(y')\rightarrow W_{C}(y)$ induces the following asymptotically commutative diagram
\begin{center}
		\begin{tikzcd}
	\mathcal{A}(W_{C}(y'))\hat\otimes\mathcal{K} \arrow["\theta^{l}_{t}(y')",d] \arrow[r,"(t_{yy'})_{*}"] &\mathcal{A}(W_{C}(y))\hat\otimes\mathcal{K} \arrow[d,"\theta^{l}_{t}(y)"] \\
			\mathcal{SK}(\mathcal{L}^{2}_{n})\hat\otimes\mathcal{K}\arrow[r,"(t_{yy'})_{*}"] & \mathcal{SK}(\mathcal{L}^{2}_{n})\hat\otimes\mathcal{K}
		\end{tikzcd}
\end{center}
which implies that 
\[(t_{yy'})_{*}((\theta^{l}_{t}(y'))(b))-(\theta^{l}_{t}(y))((t_{yy'})_{*}(b))\]converges uniformly to $0$ in norm as $t$ goes to $\infty$,
so is (\ref{equ. 5}). Subsequently, by Lemma 3.4 in \cite{GWY08},  \[\alpha_{t}(TS)-\alpha_{t}(T)\alpha_{t}(S)\] converges uniformly to $0$ in norm for all 
$T,S\in\mathbb{C}_{u,\infty}[(P_{d}(X_{n}),\mathcal{A}(V_{n}))_{n\in\mathbb{N}}]$ as $t$ goes to $\infty$.

Consequently,
the maps $(\alpha_{t})_{t\geqslant 1}$ is a well-defined asymptotic morphism from $*$-algebra $\mathbb{C}_{u,\infty}[(P_{d}(X_{n}),\mathcal{A}(V_{n}))_{n\in\mathbb{N}}]$ to $C^{*}$-algebra
$C^{*}_{u,\max,\infty}((P_{d}(X_{n}),\mathcal{SK}
(\mathcal{L}^{2}_{n}))_{n\in\mathbb{N}})$. 
It follows that the maps $(\alpha_{t})_{t\geqslant 1}$ define a $*$-homomorphism $\alpha$ from $\mathbb{C}_{u,\infty}[(P_{d}(X_{n}),\mathcal{A}(V_{n}))_{n\in\mathbb{N}}]$ to the asymptotic $C^{*}$-algebra
\[\mathcal{U}(C^{*}_{u,\max,\infty}((P_{d}(X_{n}),\mathcal{SK}
(\mathcal{L}^{2}_{n}))_{n\in\mathbb{N}})):=\frac{C_{b}([1,\infty),C^{*}_{u,\max,\infty}((P_{d}(X_{n}),\mathcal{SK}
	(\mathcal{L}^{2}_{n}))_{n\in\mathbb{N}})}{C_{0}([1,\infty),C^{*}_{u,\max,\infty}((P_{d}(X_{n}),\mathcal{SK}
	(\mathcal{L}^{2}_{n}))_{n\in\mathbb{N}})}
\]satisfying $\|\alpha_{t}(T)\|\leqslant\|T\|$ for all $t\in[1,\infty)$ and $T\in\mathbb{C}_{u,\infty}[(P_{d}(X_{n}),\mathcal{A}(V_{n}))_{n\in\mathbb{N}}]$.

\textbf{Step 3.}
It follows from the
universality of the maximal norm that the maps $(\alpha_{t})_{t\geqslant 1}$ extends to an asymptotic morphism from $C^{*}_{u,\max\infty}((P_{d}(X_{n}),\mathcal{A}(V_{n}))_{n\in\mathbb{N}})$ to $C^{*}_{u,\max,\infty}((P_{d}(X_{n}),\mathcal{SK}
(\mathcal{L}^{2}_{n}))_{n\in\mathbb{N}})$.This finishes the proof.
\end{proof}

\subsection{A geometric analogue of the Bott periodicity in finite dimensions}\label{subsection. 7.2}
The following is a geometric analogue of the Bott periodicity in finite dimensions.
\begin{theorem}\label{the. 7.8}
For each $d\geqslant 0$, the compositions $\alpha_{*}\circ\beta_{*}$ and $(\alpha_{L})_{*}\circ(\beta_{L})_{*}$
are the identity.\end{theorem}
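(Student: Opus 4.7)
The plan is to show that the composition $\alpha_t\circ\beta_t$, after unwinding the definitions, is an asymptotic morphism that computes a functional calculus on the Bott--Dirac operator $B_{E_n,v_{p_n(x)}}=D_{E_n}+C_{E_n,v_{p_n(x)}}$ on $\mathcal{L}^2_n$, and then to invoke a finite-dimensional Bott-periodicity argument (uniform in $n$) in the style of Higson--Kasparov--Trout \cite{HKT98} and Yu \cite{Yu00} to identify this composition, on $K$-theory, with the canonical Morita/suspension isomorphism between $K_{*+1}(C^*_{u,\max,\infty}((P_d(X_n))_{n\in\mathbb{N}}))$ and $K_*(C^*_{u,\max,\infty}((P_d(X_n),\mathcal{SK}(\mathcal{L}^2_n))_{n\in\mathbb{N}}))$.

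First, for $g\in\mathcal{S}$ and $T=[(T^{(n)})]\in\mathbb{C}_{u,\infty}[(P_d(X_n))_{n\in\mathbb{N}}]$, I would compute $(\beta_t(g\hat\otimes T))^{(n)}(x,x')=g_t(X\hat\otimes 1+1\hat\otimes C_{V_n,v_{p_n(x)}})\hat\otimes T^{(n)}(x,x')$, which lies in $\mathcal{A}(V_n)\hat\otimes\mathcal{K}$ and can be written as $(\beta_{V_n,W_0(p_n(x))}\hat\otimes 1)(g_t\hat\otimes T^{(n)}(x,x'))$ with $W_0(p_n(x))=\{v_{p_n(x)}\}$. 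Applying $\alpha_t$ then produces $\theta_t^0(p_n(x))(g_t\hat\otimes T^{(n)}(x,x'))$, which by the diagram in Remark~\ref{rem. 7.5} (compare Proposition~4.2 in \cite{HKT98}) is asymptotically equivalent as $t\to\infty$ to the functional calculus of the full Bott--Dirac operator: $g_t(X\hat\otimes 1+1\hat\otimes B_{E_n,v_{p_n(x)}})\hat\otimes T^{(n)}(x,x')$ in $\mathcal{SK}(\mathcal{L}^2_n)\hat\otimes\mathcal{K}$. The asymptotic equivalence follows from the approximation estimates of \cite[Lemma~7.3--7.5]{Yu00} and \cite[Lemma~2.9]{HKT98}.

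Second, I would invoke the spectral properties of $B_{E_n,v_{p_n(x)}}$: it has compact resolvent, its square has eigenvalues $2k$ ($k=0,1,2,\dots$) with finite multiplicities, and its kernel is one-dimensional, spanned by the translated Gaussian $\xi_0(v)=\pi^{-\dim E_n/4}\exp(-\tfrac{1}{2}\|v-v_{p_n(x)}\|^2)\cdot 1$. Following the Mehler-formula computation of Higson--Kasparov--Trout (Section~4 of \cite{HKT98}), the asymptotic morphism $g\hat\otimes T\mapsto g_t(X\hat\otimes 1+1\hat\otimes B_{E_n,v_{p_n(x)}})\hat\otimes T^{(n)}(x,x')$ is homotopic to the asymptotic morphism $g\hat\otimes T\mapsto g\hat\otimes P_0\cdot T$ (suitably tensored), where $P_0$ is the rank-one projection onto $\xi_0$. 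The homotopy is given by a standard rescaling/deformation argument linking the Bott--Dirac operator to the orthogonal projection onto its kernel, and must be carried out \emph{uniformly in $n$} to preserve the algebraic structure of the maximal Roe algebra at infinity.

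Third, the homotopically simplified map $g\hat\otimes T\mapsto g\hat\otimes (P_0\cdot T)$ factors as the tensor product of the identity on $\mathcal{S}$ with the Morita equivalence $\mathcal{K}\to\mathcal{K}(\mathcal{L}^2_n)\hat\otimes\mathcal{K}$ induced by $P_0$; on $K$-theory this realizes exactly the canonical identification $K_{*+1}(C^*_{u,\max,\infty}((P_d(X_n))_{n\in\mathbb{N}}))\cong K_*(C^*_{u,\max,\infty}((P_d(X_n),\mathcal{SK}(\mathcal{L}^2_n))_{n\in\mathbb{N}}))$. Hence $\alpha_*\circ\beta_*$ is the identity. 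The localized version $(\alpha_L)_*\circ(\beta_L)_*$ follows by the same argument applied pointwise in $s\in\mathbb{R}_+$, since $\beta_L$ and $\alpha_L$ are defined pointwise and the homotopies of the previous step can be chosen continuously in $s$.

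The main obstacle will be ensuring all the spectral and approximation estimates are \emph{uniform in $n$}. The dimension of $E_n$ and the position of the Bott--Dirac zero-vector $\xi_0$ both vary with $n$, so while each finite-dimensional calculation is classical, guaranteeing that the Mehler-formula deformations assemble into a genuine homotopy of asymptotic morphisms of the maximal twisted algebra at infinity (and not merely a sequence of individual homotopies) requires carefully exploiting the uniform bounded geometry of $(X_n)_{n\in\mathbb{N}}$, the uniform parameters in the fibred coarse embedding of $(Y_n)_{n\in\mathbb{N}}$, and the uniform control of the supports coming from conditions~(5)--(7) of Definition~\ref{def. 5.5}. This is the technical heart of the proof and parallels, with the added fibred-coarse-embedding bookkeeping, the argument in \cite[Section~7]{CWY13}.
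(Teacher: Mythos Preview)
Your outline matches the paper's overall strategy---identify $\alpha\circ\beta$ with a Bott--Dirac functional calculus, then homotope to a rank-one projection---but you have skipped an essential intermediate step that the paper (and \cite{CWY13}, \cite{Yu00}) carries out explicitly.

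After your first step you arrive at the asymptotic morphism $\gamma$ with
\[
(\gamma_t(g\hat\otimes T))^{(n)}(x,x')=g_{t^2}\bigl(X\hat\otimes 1+1\hat\otimes B_{E_n,\,v_{p_n(x)}}\bigr)\hat\otimes T^{(n)}(x,x'),
\]
where the base point $v_{p_n(x)}=t_{p_n(x)}(p_n(x))(s(p_n(x)))$ \emph{depends on $x$}. You then propose to rescale $B_{E_n,v_{p_n(x)}}$ directly to its kernel projection $P_{0,x}$. The problem is that the endpoint map $g\hat\otimes T\mapsto g(0)\,P_{0,x}\hat\otimes T^{(n)}(x,x')$ is \emph{not} a $*$-homomorphism for the twisted product on $\mathbb{C}_{u,\infty}[(P_d(X_n),\mathcal{SK}(\mathcal{L}^2_n))_{n\in\mathbb{N}}]$: multiplying two such elements produces $P_{0,x}\cdot (t_{p_n(x)p_n(z)})_*(P_{0,z})$, and since $(t_{p_n(x)p_n(z)})_*$ sends the Gaussian centered at $v_{p_n(z)}$ to the Gaussian centered at $t_{p_n(x)}(p_n(z))(s(p_n(z)))\neq v_{p_n(x)}$, this product is not $P_{0,x}$. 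So your ``standard rescaling'' homotopy does not land in the right category.

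The paper resolves this by inserting a separate homotopy (its Step~2) \emph{before} rescaling: one conjugates $\gamma$ by the translation unitaries $U_{p_n(x)}:\xi\mapsto\xi(\,\cdot\,-v_{p_n(x)})$, which satisfy $U_{p_n(x)}^{-1}B_{E_n,v_{p_n(x)}}U_{p_n(x)}=B_{E_n,0}$. Because these unitaries are $x$-dependent, the conjugation must be implemented via the $2\times 2$ rotation trick
\[
U^{(s)}_{p_n(x)}=R(s)\begin{pmatrix}U_{p_n(x)}\hat\otimes 1&0\\0&1\end{pmatrix}R(s)^{-1},\qquad R(s)=\begin{pmatrix}\cos\tfrac{\pi s}{2}&\sin\tfrac{\pi s}{2}\\-\sin\tfrac{\pi s}{2}&\cos\tfrac{\pi s}{2}\end{pmatrix},
\]
which gives a genuine homotopy of asymptotic morphisms from $\gamma$ to a morphism $\delta$ built from $B_{E_n,0}$ with base point independent of $x$. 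Only then does the rescaling $s\mapsto s^{-1}B_{E_n,0}$ produce a well-defined $*$-homomorphism $\sigma$ at the endpoint, with a single projection $P^{(n)}$ per $n$. Your concern about uniformity in $n$ is legitimate but secondary; the $x$-dependence within each $n$ is the real obstruction you need to address.
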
 
\begin{proof}[\rm{\textbf{Proof.}}]
\textbf{Step 1.} Let $\gamma$ be the asymptotic morphism 
\[\gamma:\mathcal{S}\hat\otimes C^{*}_{u,\max\infty}((P_{d}(X_{n}))_{n\in\mathbb{N}})\rightarrow C^{*}_{u,\max,\infty}((P_{d}(X_{n}),\mathcal{SK}
(\mathcal{L}^{2}_{n}))_{n\in\mathbb{N}})\]
defined by the formula 
\[\gamma_{t}(g\hat\otimes T)=[((\gamma_{t}(g\hat\otimes T))^{(0)},\dots,(\gamma_{t}(g\hat\otimes T))^{(n)},\dots)]\]
for all $g\in\mathcal{S}$ and $T=[(T^{(0)},\dots,T^{(n)},\dots)]\in\mathbb{C}_{u,\infty}[(P_{d}(X_{n}))_{n\in\mathbb{N}}]$, where
\[(\gamma_{t}(g\hat\otimes T))^{(n)}(x,x')=g_{t^{2}}(X\hat\otimes 1+1\hat\otimes B_{E_{n},t_{p_{n}(x)}(p_{n}(x))(s(p_{n}(x)))})\hat\otimes T^{(n)}(x,x')\]
for any $t\in[1,\infty)$, $x,x'\in Z_{d,n}$ with $n\in\mathbb{N}$, and $B_{E_{n},t_{p_{n}(x)}(p_{n}(x))(s(p_{n}(x)))}$ is the Bott-Dirac operator of $E_{n}$ at base point $t_{p_{n}(x)}(p_{n}(x))(s(p_{n}(x)))$.
It follows from Proposition 2.13, Appendix in \cite{HKT98} and Proposition 7.7 in \cite{Yu00} that the composition $\alpha\circ\beta$ is asymptotically equivalent to $\gamma$. Hence, $\alpha^{*}\circ\beta^{*} =\gamma^{*}$.\par 
\textbf{Step 2.} Define $\delta$ to be the asymptotic morphism
\[\delta:\mathcal{S}\hat\otimes C^{*}_{u,\max\infty}((P_{d}(X_{n}))_{n\in\mathbb{N}})\rightarrow C^{*}_{u,\max,\infty}((P_{d}(X_{n}),\mathcal{SK}
(\mathcal{L}^{2}_{n}))_{n\in\mathbb{N}})\]
by the formula 
\[\delta_{t}(g\hat\otimes T)=[((\delta_{t}(g\hat\otimes T))^{(0)},\dots,(\delta_{t}(g\hat\otimes T))^{(n)},\dots)]\]
for all $g\in\mathcal{S}$ and $T=[(T^{(0)},\dots,T^{(n)},\dots)]\in\mathbb{C}_{u,\infty}[(P_{d}(X_{n}))_{n\in\mathbb{N}}]$, where
\[(\delta_{t}(g\hat\otimes T))^{(n)}(x,x')=g_{t^{2}}(X\hat\otimes 1+1\hat\otimes B_{E_{n},0})\hat\otimes T^{(n)}(x,x')\]
for all $t\geqslant 1$, $x,x'\in Z_{d,n}$, $n\in\mathbb{N}$, and $B_{E_{n},0}$ is the Bott-Dirac operator of $E_{n}$ at base point $0$.\par 
For each $x\in Z_{d,n}$, $n\in\mathbb{N}$, let $U_{p_{n}(x)}: \mathcal{L}^{2}_{n}\rightarrow \mathcal{L}^{2}_{n}$ 
be the unitary operator defined by
\[(U_{p_{n}(x)}(\xi))(v)=\xi(v-t_{p_{n}(x)}(p_{n}(x))(s(p_{n}(x))))
\]
for all $\xi\in\mathcal{L}^{2}_{n}$
and $v\in E_{n}$. Then
\[U^{-1}_{p_{n}(x)}B_{E_{n},t_{p_{n}(x)}(p_{n}(x))(s(p_{n}(x)))}U_{p_{n}(x)}=B_{E_{n},0}.\]
For each $s\in[0, 1]$, we define an asymptotic morphism
\[\varPhi^{(s)}:\mathcal{S}\hat\otimes C^{*}_{u,\max\infty}((P_{d}(X_{n}))_{n\in\mathbb{N}})\rightarrow C^{*}_{u,\max,\infty}((P_{d}(X_{n}),\mathcal{SK}
(\mathcal{L}^{2}_{n}))_{n\in\mathbb{N}})\hat\otimes \mathcal{M}_{2}(\mathbb{C})\]
by the formula 
\[(\varPhi^{(s)}_{t}(g\hat\otimes T))^{(n)}(x,x')=
(U^{(s)}_{p_{n}(x)})^{-1}
\left[
\begin{array}{cc}
	(\gamma_{t}(g\hat\otimes T))^{(n)}(x,x') & 0\\
	0&0\\
\end{array}\right]
 U^{(s)}_{p_{n}(x)}\]
for all $t\geqslant 1$, $x,x'\in Z_{d,n}$, $n\in\mathbb{N}$, where
 \[U^{(s)}_{y}=R(s)
\left[
\begin{array}{cc}
	U_{p_{n}(x)}\hat\otimes 1 & 0\\
	0&0\\
\end{array}\right]
{R(s)}^{-1},\quad\text{and}\quad
R(s)=\left[
\begin{array}{cc}
	\cos(\frac{\pi}{2}s) & \sin(\frac{\pi}{2}s)\\
	-\sin(\frac{\pi}{2}s)&\cos(\frac{\pi}{2}s)\\
\end{array}\right].\]
Then $\varPhi^{(s)}$ is a homotopy between the asymptotic morphisms $\gamma=\varPhi^{(1)}$ and $\delta=\varPhi^{(0)}$. Hence $\gamma_{*}=\delta_{*}$.\par 
\textbf{Step 3.} For each $n\in\mathbb{N}$, let $P^{(n)}$ be the projection of $E_{n}$ onto the 1-dimensional kernel of
the Bott-Dirac operator $B_{E_{n},0}$ at the origin $0$. For each $s\in[0, 1]$, define an asymptotic
morphism
\[\varPsi^{(s)}:\mathcal{S}\hat\otimes C^{*}_{u,\max\infty}((P_{d}(X_{n}))_{n\in\mathbb{N}})\rightarrow C^{*}_{u,\max,\infty}((P_{d}(X_{n}),\mathcal{SK}
(\mathcal{L}^{2}_{n}))_{n\in\mathbb{N}})\]
by the formula 
\begin{align*}
	\begin{split}
	(\varPsi^{s}_{t}(g\hat\otimes T))^{(n)}(x,x')=
		\left\{
		\begin{array}{c}
		g_{t^{2}}(\frac{1}{s}B_{E_{n},0})\hat\otimes T^{(n)}(x,x'),\quad \text{if}\ s\in(0,1];\\
			g(0)\cdot p^{(n)}\hat\otimes T^{(n)}(x,x'),\quad \text{if}\ s=0,\\
		\end{array}
  \right.
	\end{split}
\end{align*}
for all $t\geqslant 1$, $x,y\in Z_{d,n}$, $n\in\mathbb{N}$. Then $\varPsi^{(s)}$ is a homotopy between the asymptotic morphism $\delta$
and the $*$-homomorphism
\[\sigma:\mathcal{S}\hat\otimes C^{*}_{u,\max\infty}((P_{d}(X_{n}))_{n\in\mathbb{N}})\rightarrow C^{*}_{u,\max,\infty}((P_{d}(X_{n}),\mathcal{SK}
(\mathcal{L}^{2}_{n}))_{n\in\mathbb{N}})\]
defined by the formula 
\[\sigma(g\hat\otimes[(T^{(0)},\dots,T^{(n)},\dots)])=g(0)\cdot [(P^{(n)}\hat\otimes T^{(0)},\dots,P^{(n)}\hat\otimes T^{(0)},\dots)].\]
It is clear that $\sigma$ induces the identity on $K$-theory. Therefore, we conclude that
\[\alpha_{*}\circ\beta_{*}=\gamma_{*}=\delta_{*}=\text{identity}\]
on $K_{*}(C^{*}_{u,\max\infty}((P_{d}(X_{n}))_{n\in\mathbb{N}}))$. The case for $(\alpha_{L})_{*}\circ (\beta_{L})_{*}$ is similar.
\end{proof}
\begin{proof}[\rm\textbf{Proof of Theorem 1.1 }]By the constructions of Bott maps in Section \ref{section 5} and Dirac maps in Section \ref{section 7}, we obtain the following commutative diagram

\begin{center}
	\begin{tikzcd}
\lim\limits_{d\rightarrow\infty}K_{*+1}(C^{*}_{u, L, \max,\infty}(P_{d}(X_{n}))_{n\in\mathbb{N}}) \arrow[d,"(\beta_{L})_{*}"] \arrow[r,"e_{*}"] & \lim\limits_{d\rightarrow\infty}K_{*+1}(C^{*}_{u, \max,\infty}(P_{d}(X_{n}))_{n\in\mathbb{N}})\arrow[d,"\beta_{*}"] \\
	\lim\limits_{d\rightarrow\infty}K_{*}(C^{*}_{u, L, \max,\infty}((P_{d}(X_{n}),\mathcal{A}(V_{n}))_{n\in\mathbb{N}})) \arrow[d,"(\alpha_{L})_{*}"] \arrow[r,"e^{\mathcal{A}}_{*}"] & 	\lim\limits_{d\rightarrow\infty}K_{*}(C^{*}_{u, \max,\infty}((P_{d}(X_{n}),\mathcal{A}(V_{n}))_{n\in\mathbb{N}}))\arrow[d,"\alpha_{*}"] \\
\lim\limits_{d\rightarrow\infty}K_{*}(C^{*}_{u, L, \max,\infty}((P_{d}(X_{n}),\mathcal{SK}(\mathcal{L}^{2}(E_{n})))_{n\in\mathbb{N}})) \arrow[r,"e_{*}"] & 	\lim\limits_{d\rightarrow\infty}K_{*}(C^{*}_{u, \max,\infty}((P_{d}(X_{n}),\mathcal{SK}(\mathcal{L}^{2}(E_{n})))_{n\in\mathbb{N}})).
	\end{tikzcd}
\end{center}
It follows from Theorem \ref{the. 7.8}, together with an argument of diagram chasing that the evaluation map 
\[e_{*}:\lim\limits_{d\rightarrow\infty}K_{*}(C^{*}_{u, L, \max,\infty}(P_{d}(X_{n}))_{n\in\mathbb{N}}) \rightarrow \lim\limits_{d\rightarrow\infty}K_{*}(C^{*}_{u, \max,\infty}(P_{d}(X_{n}))_{n\in\mathbb{N}})\]
is an isomorphism. Theorem \ref{the. 1.1} is supported by Theorem \ref{the. 4.6}.\end{proof}

\bibliographystyle{alpha}
\bibliography{ref}

\end{document}